\newcommand{\titel}{Local elliptic law}
\numberwithin{equation}{section}
\newcommand{\A}{\mathbb{A}}
\newcommand{\D}{\mathbb{D}}
\newcommand{\R}{\mathbb{R}}  % The real numbers.
\C\renewcommand{\C}{\mathbb{C}}\else\newcommand{\C}{\mathbb{C}}\fi % Complex numbers
\renewcommand{\Im}{\mathrm{Im}\,} %imaginary part of a complex number
\renewcommand{\Re}{\mathrm{Re}\,} %real part of a complex number
\newcommand{\N}{\mathbb{N}}  % Positive integers.	
\newcommand{\E}{\mathbb{E}}  % expected value of random variable	
\renewcommand{\P}{\mathbb{P}}  % probability measure
\newcommand{\Z}{\mathbb{Z}}  % Integers
\newcommand{\eps}{\varepsilon} % "correct" epsilon
\newcommand*{\defeq}{\mathrel{\vcenter{\baselineskip0.5ex \lineskiplimit0pt\hbox{\scriptsize.}\hbox{\scriptsize.}}}=}
\newcommand{\pt}{\partial}
\DeclareMathOperator{\supp}{supp}
\DeclareMathOperator{\Int}{int}
\newcommand{\vf}{\boldsymbol v}
\newcommand{\wf}{\boldsymbol{w}}
\newcommand{\uf}{\boldsymbol{u}}
\newcommand{\yf}{\boldsymbol y}
\newcommand{\xf}{\boldsymbol x}
\newcommand{\Af}{\boldsymbol A}
\newcommand{\Bf}{\boldsymbol B}
\newcommand{\Df}{\boldsymbol D}
\newcommand{\Gf}{\boldsymbol{G}}
\newcommand{\Hf}{\boldsymbol{H}}
\newcommand{\Mf}{\boldsymbol{M}}
\newcommand{\Pf}{\boldsymbol P}
\newcommand{\Qf}{\boldsymbol Q}
\newcommand{\Rf}{\boldsymbol{R}}
\newcommand{\Xf}{\boldsymbol X}
\newcommand{\Zf}{\boldsymbol Z}
\newcommand{\Deltaf}{\boldsymbol \Delta}
\newcommand{\SEop}{\boldsymbol{\cal{S}}}
\newcommand{\DD}{\mathbb{D}}
\newcommand{\smallS}{\mathscr{S}}
\newcommand{\smallL}{\mathscr{L}}
\newcommand{\smallT}{\mathscr{T}}
\newcommand{\smallU}{\mathscr{U}}
\newcommand{\bs}[1]{\boldsymbol{{#1}}} %bold
\renewcommand{\rm}{\mathrm} %upright
\DeclareMathOperator{\Gap}{Gap}
\newtheoremstyle{test}% name
  {}%      Space above, empty = `usual value'
  {}%      Space below
  {\itshape}% Body font
  {}%         Indent amount (empty = no indent, \parindent = para indent)
  {\bfseries}% Thm head font
  {.}%        Punctuation after thm head 
  { }% Space after thm head: \newline = linebreak
  {}%         Thm head spec
\theoremstyle{test}
\newtheorem{defi}{Definition}[section]
\newtheorem*{rem*}{Remark}   %no numbering
\newtheorem*{ex*}{Example}   %no numbering
\newtheorem*{pro*}{Proposition} %no numbering
\newtheorem*{def*}{Definition}
\newtheorem*{coro*}{Corollary}
\newtheorem*{thm*}{Theorem}
\theoremstyle{test}
    \newtheorem{theorem}[defi]{Theorem}
    \newtheorem{proposition}[defi]{Proposition}
    \newtheorem{corollary}[defi]{Corollary}
    \newtheorem{lemma}[defi]{Lemma}
    \newtheorem{definition}[defi]{Definition}% howto make rm-style text inside definitions
    \newtheorem{convention}[defi]{Convention}
    \newtheorem{remark}[defi]{Remark}
\newcommand{\bels}[2] {
        \begin{equation} \label{#1} \begin{split} 
                #2 
        \end{split} \end{equation}
        }
\newcommand{\bbm}{\mathbbm} %blackboard bold
\renewcommand{\cal}{\mathcal} 
\newcommand{\scr}{\mathscr} 
\newcommand{\ul}[1]{\underline{#1} \!\,} %underline
\newcommand{\ol}[1]{\overline{#1} \!\,} %overline
\newcommand{\wh}{\widehat}
\newcommand{\wt}{\widetilde}
\renewcommand{\P}{\mathbb{P}}
\newcommand{\ii}{\mathrm{i}} %\newcommand{\mi}{\mathrm{i}}
\newcommand{\dd}{\mathrm{d}}
\newcommand{\pb}[1]{\bigl({#1}\bigr)}
\newcommand{\pbb}[1]{\biggl({#1}\biggr)}
\newcommand{\abs}[1]{\lvert #1 \rvert}
\newcommand{\absb}[1]{\big\lvert #1 \big\rvert}
\newcommand{\absB}[1]{\Big\lvert #1 \Big\rvert}
\newcommand{\absbb}[1]{\bigg\lvert #1 \bigg\rvert}
\newcommand{\norm}[1]{\lVert #1 \rVert}
\newcommand{\normb}[1]{\big\lVert #1 \big\rVert}
\newcommand{\avg}[1]{\langle #1 \rangle}
\newcommand{\scalar}[2]{\langle{#1} \mspace{2mu}, {#2}\rangle}
\DeclareMathOperator{\tr}{Tr}
\DeclareMathOperator{\Tr}{Tr}
\DeclareMathOperator{\im}{Im}
\DeclareMathOperator*{\spec}{Spec}						%Spectrum
\newcommand{\1} {\mspace{1 mu}}
\newcommand{\2} {\mspace{2 mu}}
\newcommand{\genarg} {{\,\cdot\,}}  % general argument: f(\genarg) produces dot with little space around it
\newcommand{\qq}[1]{\llbracket #1 \rrbracket}
\newcommand{\mtwo}[2]
{
\left(
\begin{array}{cc}
#1 
\\
#2
\end{array}
\right)
}
\newcommand{\nc}{\normalcolor}
\def\blfootnote{\xdef\@thefnmark{}\@footnotetext}
\begin{document}
\blfootnote{Date: \today}
\blfootnote{Keywords:  local law, eigenvector delocalisation, elliptic ensemble.} 
\blfootnote{MSC2010 Subject Classifications: 60B20, 15B52. }

\title{\vspace{-0.8cm}{\textbf{\titel}}} 
\author{\begin{tabular}{c}Johannes Alt\thanks{
This project has received funding from the European Union's Horizon 2020 research and innovation programme under the Marie Sklodowska-Curie grant 
agreement No.\ 895698, from the European Research Council (ERC) under the European Union's Horizon 2020 research and innovation programme (grant agreement No.\ 715539 RandMat) and from the Swiss National Science Foundation through the NCCR SwissMAP grant. These are gratefully acknowledged. \newline Email: \href{mailto:johannes.alt@unige.ch}{johannes.alt@unige.ch} 
} \\ {\small University of Geneva, Courant Institute} \end{tabular} \hspace*{0.5cm} \and \hspace*{0.5cm} \begin{tabular}{c} Torben Krüger\thanks{Financial support from Novo Nordisk Fonden Project Grant 0064428 \& VILLUM FONDEN via the QMATH Centre of Excellence (Grant No. 10059) and Young Investigator Award  (Grant No. 29369) is gratefully acknowledged.  \newline Email: \href{mailto:tk@math.ku.dk}{tk@math.ku.dk}}
\\ {\small University of Copenhagen} \end{tabular}}
\date{}

\maketitle

\vspace*{-0.5cm} 

\begin{abstract}
The  empirical eigenvalue distribution  of the elliptic random matrix ensemble tends to the uniform measure on an ellipse in the complex plane as its dimension tends to infinity. 
We show this convergence on all mesoscopic scales slightly above the typical eigenvalue spacing  in the bulk spectrum with an optimal convergence rate. As a corollary we obtain complete delocalisation for the corresponding eigenvectors in any basis. 
\end{abstract}

%\tableofcontents

\section{Introduction} 

The empirical spectral distribution (ESD) of an $n \times n$ random matrix $\Xf=(x_{ij})$ is typically well approximated by a deterministic measure as $n$ tends to infinity.  For Hermitian matrices this measure is supported on the real line. Wigner matrices with independent and identically distributed (i.i.d.) entries above the diagonal and $x_{ij} = \ol{x}_{ji}$ are the basic representatives of this symmetry type. Their asymptotic spectral density is the celebrated semicircle law \cite{Wigner1955}. In contrast, the spectrum of non-Hermitian random matrices concentrates on an area of the complex plane. Their  representatives are matrices whose entries $x_{ij}$ are   i.i.d.\ without any symmetry constraints. The circular law \cite{bai1997,Girko1985,TaoVuCircular} asserts that the limiting density for such matrices is the uniform distribution on a disk. 
 
The \emph{elliptic ensemble}, for which all entry pairs $(x_{ij},x_{ji})$ with $i < j$ are i.i.d.,  naturally interpolates between the representatives of these two symmetry types.  In particular, any linear combination of independent Wigner matrices and non-Hermitian matrices with i.i.d.\ entries has this property. As the name suggests the ESD of such matrices converges to the uniform distribution on an ellipse in the complex plane.
Such \emph{elliptic law} was first established by Girko  under a bounded density assumption  on the entries \cite{GirkoElliptic1,GirkoStrongElliptic}.  It was extended by Naumov in \cite{NaumovElliptic}, assuming only finite fourth moment, and subsequently by Nguyen and O'Rourke in \cite{NguyenORourke2015}, relying solely on finite variances of the entries.  
In case the entries of the elliptic random matrix $\Xf$ are Gaussian, its joint eigenvalue distribution and limiting ESD are explicitly computable \cite{Ben08,Johansson2005FromGT,ledoux00635410}. 

Elliptic random matrices are commonly used to model the dynamics of large neural networks \cite{PhysRevA.36.4922,correlationsMarti}, where the reciprocal connection between neurons, modelled by $\rm{Cov}(x_{ij},x_{ji}) \ne 0$, is overrepresented \cite{overrep1,overrep2}. They also play a similar role in the analysis of the stability-complexity relationship within complex ecosystems \cite{Fyodorov6827}.
Universal power law decay for the long time asymptotics of a system of differential equations, critically coupled through a random elliptic connectivity matrix, has been shown in \cite{MC} for the i.i.d.\ Gaussian and in \cite{EKR2019} for the general non-Gaussian model without assuming identical distributions. 

It is a hallmark of random matrix theory that the approximation of the eigenvalue  process by its limiting density remains valid on  all mesoscopic scales. The number of eigenvalues in a ball concentrates around the value predicted by the density as long as  its radius stays well above the typical eigenvalue spacing. 
Such strong \emph{local law} is a signature of the logarithmic repulsion between eigenvalues and is in stark contrast to the behaviour of the Poisson point process, for which the independence between point positions leads to much larger fluctuations. The first local laws on the optimal spectral scale in the bulk appeared in the context of Wigner matrices \cite{ESY2009,EYYBern} (see \cite{EYbook} for a historical account).   Since then they have been established for a wide range of self-adjoint  models,
 including random matrices with deterministic deformations \cite{YAR2016,LSSY2016}, variance profiles \cite{AEK2}, general decaying correlations \cite{ShortRangeCorrelations,Erdos2017Correlated}, heavy tailed entries \cite{aggarwal2019goe,HeavyBG12}, band matrices \cite{BYYBand20,EKYY3}, sparse random  graphs \cite{BHY19,RegRKY15,ERGraph}, invariant ensembles \cite{bourgade2014} as well as specific polynomials \cite{Ande15,ERDOS2020108507} and rational functions \cite{erdos2020scattering} in several random matrices.

 In the non-Hermitian setup local laws were first established for matrices with i.i.d.\ entries  \cite{Bourgade2014NonH,Y_circularlaw}.  They were  extended to ensembles with  independent non-identically distributed entries \cite{Altcirc,AltSpecRad}, ensembles with decaying correlations \cite{Altcorrcirc}, products of independent matrices \cite{Gotze2017,ProductNemish2017} and to matrices that are either multiplied by independent Haar unitaries on the left and right \cite{bao2019} or by a matrix with i.i.d.\ entries on one side \cite{xi2017}.   In all these cases the spectral density becomes radially symmetric in the limit of large dimension. Control on the fluctuation of the ESD down to local scales is also a vital ingredient in    spectral universality proofs \cite{CipolloniEdge,tao2015}.

In this work we establish the local law in the spectral bulk of elliptic $n \times n$ - random matrices $\Xf$. The spectral resolution and convergence rates are optimal, up to factors  $n^{o(1)}$. 
As a corollary  we show complete delocalisation of the eigenvectors of $\Xf$ in any basis. For matrices with independent entries similar results have been proved in \cite{EigenvectorsLuh,EigenvectorsTikhomirov,rudelson2015}, for matrices with decaying correlations in \cite{Altcorrcirc} and  for a model with elliptic type correlations and non-random   imaginary part in  \cite{RV16}.

Our proof relies on Girko's Hermitization trick, i.e.\ on expressing the spectral distribution of $\Xf$ in terms of the eigenvalues of the $\zeta$-dependent family of Hermitian matrices 
\begin{equation} \label{eq:def_Hf_zeta_intro} 
\Hf_\zeta := \begin{pmatrix} 0 & \Xf- \zeta \\ (\Xf-\zeta)^* & 0 \end{pmatrix}\,, \qquad \zeta \in \C
\end{equation}
at the origin. Optimal control on the resolvent $\Gf(\zeta,\eta):=(\Hf_\zeta-\ii \eta)^{-1}$ for $\eta \gg n^{-1}$  in combination with a bound on  the smallest singular value of $\Xf-\zeta$, which we import from  \cite{NguyenORourke2015}, allows to infer the local law for $\Xf$. The Hermitization $\Hf_\zeta$ belongs to a general class of self-adjoint random matrices with correlated entries, for which it was shown in \cite{Erdos2017Correlated} that $\Gf=\Gf(\zeta,\eta)$ satisfies the non-linear matrix Dyson equation 
(cf.\ \eqref{MDElarge})
with solution $\Mf=\Mf(\zeta,\eta)$ in the limit $n \to \infty$. In contrast to  \cite{Erdos2017Correlated}, however, the equation associated to $\Hf_\zeta$ has a structural instability on local scales $\eta \ll 1$.  
We treat this instability by projecting the  Dyson equation onto a codimension one subspace to conclude $\Gf-\Mf \to 0$. A similar strategy has been used in \cite{Altcorrcirc}.  In contrast to previous works  we establish the convergence $\scalar{\xf}{(\Gf-\Mf)\yf} \to 0$ isotropically for all deterministic vectors $\xf,\yf$ without performing an elaborate separate step (see e.g.\ \cite{EKYYiso,KYIsotropic}) and without tracking a family of high probability sets associated to a large number of vectors for which the convergence holds (see e.g.\ \cite{Erdos2017Correlated,YAR2016}).  Compared to the general settings in the latter two references, here this simplification is achievable because of the simple structure  within both, the correlations and expectation values of   $\Hf_\zeta$ entries.   We reduce the stability analysis of the matrix Dyson equation to a finite dimensional problem by taking partial traces of $\Gf-\Mf$ and at the same time treat all directions simultaneously by working with isotropic ${\rm L}^p$-norms directly on the space of random matrices.  This strategy streamlines the proof of the local law for $\Hf_\zeta$ and is generalisable to random matrices with block correlation structures. More precisely, it is applicable to Kronecker random matrices (see \cite{Kronalt2019} for a definition) with constant variance and expectation profiles, i.e.\ for $\wt{a}_i=a$ and $s_{ij}^\mu = s^\mu$, $t_{ij}^\mu = t^\mu$ in \cite[eq.~(2.1) and eq.~(2.3)]{Kronalt2019}, respectively.

\section{Main results} 

In this section, we state our assumptions and the main results. 

\subsection{Assumptions} \label{sec:assumption} 

Throughout the paper, let $\varrho \in (-1,1)$ and $\mu \in [0,1]$ be fixed. 
Let $\xi_0$, $\xi_1$ and $\xi_2$ be complex random variables such that 
\begin{align*} 
\E \2\xi_j & = 0, & \quad \E \abs{\xi_j}^\nu & < \infty, & \quad \E \abs{\xi_0}^2 & = 1, & \quad \E (\Re \xi_k)^2 & = \mu, \\ \E (\Im \xi_k)^2 & = 1- \mu,  & \quad  
\E [\Re \xi_1 \Re \xi_2] & = \mu \varrho, & \quad \E [\Im \xi_1 \Im \xi_2] & = -(1-\mu) \varrho, & 
\quad \E [\Re \xi_k \Im \xi_l] & = 0 
\end{align*}
for all $k,l \in \{1, 2\}$, $j \in \{ 0,1,2\}$ and $\nu \in \N$.

Let $\Xf = (x_{ij})_{i,j=1}^n \in \C^{n\times n}$ be a random matrix such that 
\begin{itemize} 
\item $\{ (x_{ij}, x_{ji}) \colon i,j \in \qq{n},~i < j\}\cup \{ x_{ii} \colon i \in \qq{n}\}$ consists of independent random variables, 
\item $\{ x_{ii} \colon i \in \qq{n}\}$ are independent copies of $\frac{1}{\sqrt n} \xi_0$, 
\item 
$\{ (x_{ij},x_{ji}) \colon i,j \in \qq{n},\, i < j \}$ are independent copies of $\frac{1}{\sqrt n} (\xi_1,\xi_2)$. 
\end{itemize} 
Here, we used the notation $\qq{n} \defeq \{1, \ldots, n\}$.

\subsection{Local elliptic law and delocalisation} 

We introduce $\sigma_\varrho \colon \C \to [0,\infty)$, the uniform probability density 
on the ellipse $E_\varrho$ defined through 
\begin{equation} \label{eq:def_sigma_E} 
 \sigma_\varrho(\zeta) = \frac{1}{\pi} \frac{1}{1 -\varrho^2} \bbm{1}( \zeta \in E_\varrho), \quad \quad 
E_\varrho \defeq E_{\varrho,0}, \quad \quad 
E_{\varrho,\delta} \defeq \biggl\{ \zeta \in \C \colon \frac{(\Re \zeta)^2}{(1 + {\varrho})^2} + \frac{(\Im \zeta)^2}{(1 - {\varrho})^2} \leq 1 -\delta \biggr\}  
\end{equation}
with $\delta \in [0,1]$. 

The next theorem, the main result of the present work, will provide detailed information about the eigenvalue density of $\Xf$ on all mesoscopic scales. 
Such scale is probed by the observables $f_{\zeta_0,\alpha}$ defined through 
\begin{equation} \label{eq:def_f_zeta_0_alpha} 
f_{\zeta_0,\alpha} \colon \C \to \C, \qquad \qquad f_{\zeta_0,\alpha}(\zeta) \defeq n^{2\alpha} f(n^\alpha(\zeta- \zeta_0)), 
\end{equation}
where $f\colon\C \to \C$ is an arbitrary function, $\zeta_0 \in \C$ and $\alpha >0$. 

\begin{theorem}[Local elliptic law] \label{thm:local_elliptic_law} 
Let $\delta \in (0,1)$ and $\alpha \in (0,1/2)$. 
Then, for any $\eps >0$ and $\nu \in \N$, there is $C >0$ such that 
\begin{equation} \label{eq:local_law_X} 
 \P \bigg( \absbb{\frac{1}{n} \sum_{\zeta \in \spec \Xf} f_{\zeta_0,\alpha}(\zeta) - \int_{\C} f_{\zeta_0,\alpha}(\zeta) \sigma_\varrho(\zeta) \dd^2\zeta} \leq n^{-1 + 2\alpha + \eps} \norm{\Delta f}_{\mathrm{L}^1} \bigg) 
\geq 1 - C n^{-\nu} 
\end{equation}
uniformly for all $n \in \N$, $\zeta_0 \in E_{\varrho,\delta}$, and every $f \in C_0^2(\C)$ 
satisfying $f(\zeta) = 0$ for all $\zeta \in \C$ with $\abs{\zeta} \geq \varphi$ 
and $\norm{\Delta f}_{\mathrm{L}^{2+a}} \leq n^D \norm{\Delta f}_{\mathrm{L}^1}$ with some 
constants $\varphi$, $a>0$ and $D \in \N$. 
\end{theorem}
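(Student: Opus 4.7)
The plan is to apply Girko's Hermitization trick. Using $\Delta_\zeta \log\abs{\zeta-\zeta'} = 2\pi\delta_{\zeta'}$ one rewrites
$$\frac{1}{n}\sum_{\zeta\in\spec\Xf}f_{\zeta_0,\alpha}(\zeta) - \int f_{\zeta_0,\alpha}\sigma_\varrho \,\dd^2\zeta= \frac{1}{2\pi n}\int_\C \Delta f_{\zeta_0,\alpha}(\zeta)\sb{\log\absb{\det(\Xf-\zeta)} - n L_\varrho(\zeta)}\dd^2\zeta,$$
where $L_\varrho$ is the logarithmic potential of $\sigma_\varrho$, which one expects to be $\lim_{n\to\infty} n^{-1}\log|\det(\Xf-\zeta)|$. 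The log-determinant is then expressed through the standard integral representation
$$\frac{1}{n}\log\absb{\det(\Xf-\zeta)} = \log T - \int_0^T \Im \frac{1}{n}\tr \Gf(\zeta,\eta)\,\dd\eta + O(T^{-2}),$$
and $L_\varrho$ admits an analogous representation with $\frac{1}{n}\tr\Mf(\zeta,\eta)$. Choosing $T=n^C$, the theorem reduces to estimating the $\eta$-integral of $\Im\frac{1}{n}\tr[\Gf-\Mf]$ against $\Delta f_{\zeta_0,\alpha}$, plus a small-$\eta$ singular contribution.

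\textbf{Local law for the Hermitization.} The central input is an averaged and isotropic local law for $\Hf_\zeta$, namely $\scalar{\xf}{(\Gf-\Mf)\yf} \lesssim (n\eta)^{-1+\eps}$ with very high probability, uniformly in $\zeta\in E_{\varrho,\delta}$ and $\eta\gg n^{-1}$. I would derive a perturbed matrix Dyson equation for $\Gf$ via cumulant expansion, exploiting the $2\times 2$ block structure of $\Hf_\zeta$, and then invert the linearisation of the MDE around $\Mf$. The main difficulty, flagged in the introduction, is that this linearisation has a structural instability as $\eta\to 0$: a single direction becomes non-coercive. Following \cite{Altcorrcirc}, I would decompose $\Gf-\Mf$ into its projection onto this unstable direction and its orthogonal complement, invert the coercive part directly, and derive a scalar quadratic self-consistent equation for the unstable component. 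Taking partial traces of $\Gf-\Mf$ should reduce the stability analysis to a finite-dimensional problem, and working with isotropic $\mathrm{L}^p$-norms on the space of random matrices should yield the isotropic bound simultaneously for all deterministic $\xf,\yf$, thereby bypassing a separate isotropic step.

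\textbf{Small $\eta$ regime.} For $\eta\in[0,n^{-1-\gamma}]$ the local law is unavailable, and here the dominant contribution to the $\eta$-integral is the logarithmic singularity of $\log\abs{\det(\Xf-\zeta)}$ near $\sigma_{\min}(\Xf-\zeta)=0$. I would control this using the smallest singular value estimate for $\Xf-\zeta$ imported from \cite{NguyenORourke2015}, combined with a polynomially fine net of $\zeta$-values over the support of $\Delta f_{\zeta_0,\alpha}$. The hypothesis $\norm{\Delta f}_{\mathrm{L}^{2+a}} \leq n^D \norm{\Delta f}_{\mathrm{L}^1}$ together with the compact support of $f$ is used here via H\"older to transfer a pointwise $\mathrm{L}^{q}$ bound on the log-integral (from the singular value tail, whose $\mathrm{L}^p$ moments are controlled for $p$ slightly below $2$) into the desired $\mathrm{L}^1$-estimate in $\Delta f$.

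\textbf{Main obstacle.} The hard part is unquestionably the stability analysis of the matrix Dyson equation near $\eta = 0$. Identifying the unstable direction quantitatively, deriving and solving the scalar self-consistent equation for its projection with optimal error bounds, and pushing the local law down to the optimal scale $\eta\gtrsim n^{-1}$ with the optimal $(n\eta)^{-1}$ rate is where the bulk of the technical work must go. Achieving this uniformly in $\zeta\in E_{\varrho,\delta}$, and then combining the intermediate-$\eta$ local-law contribution with the small-$\eta$ singular-value contribution so that the claimed rate $n^{-1+2\alpha+\eps}\norm{\Delta f}_{\mathrm{L}^1}$ emerges cleanly, will require careful tracking of how the error estimates propagate through each of the three $\eta$-regimes.
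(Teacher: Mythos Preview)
Your overall strategy matches the paper's: Girko Hermitization, integral representation of the log-determinant via $\Im\avg{\Gf}$, local law for $\Hf_\zeta$ with an instability in the stability operator, and the Nguyen--O'Rourke smallest singular value bound for the small-$\eta$ regime. Two points, however, deserve correction.

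\textbf{The unstable direction is automatically absent.} You propose to ``derive a scalar quadratic self-consistent equation for the unstable component'' of $\Gf-\Mf$. The paper does not need this. After taking partial traces in the $2\times 2$ block structure, the unstable direction of the stability operator $\smallL$ on $\C^{2\times 2}$ is $E_- = \mathrm{diag}(1,-1)$, and the partial trace $\underline{\Gf-\Mf}$ is \emph{exactly} orthogonal to $E_-$ because $\avg{\Gf_{11}}=\avg{\Gf_{22}}$ for any Hermitization resolvent (a deterministic symmetry, cf.\ \cite[Lemma~B.5]{Altcorrcirc}). So one simply inverts $\smallL$ on $E_-^\perp$, where $\norm{\smallL^{-1}|_{E_-^\perp}}\lesssim_\delta 1$ uniformly on $E_{\varrho,\delta}\times(0,1]$. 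No quadratic equation for an unstable scalar appears; what you flag as the ``hard part'' is therefore substantially simpler than you anticipate.

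\textbf{A deterministic net does not work; the paper uses Monte Carlo sampling.} Your plan to handle the $\zeta$-integral via a ``polynomially fine net'' is problematic: $\zeta\mapsto \log|\det(\Xf-\zeta)|$ is not Lipschitz (it blows up at eigenvalues of $\Xf$), so one cannot interpolate pointwise bounds from net points to the full integral. The paper instead writes the integrand as $F(\zeta)=c\,\Delta f_{\zeta_0,\alpha}(\zeta)\,h(\zeta)$ with $\norm{h}_{L^p(\Omega)}\lesssim_p 1$ deterministically, and approximates $\int_\Omega F\,d\mu$ by $\frac{1}{m}\sum_i F(\xi_i)$ with $m=n^{\nu+2D+20}$ i.i.d.\ uniform points $\xi_i$ \emph{independent of} $\Xf$. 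The Monte Carlo error is $\lesssim m^{-1/2}\norm{F}_{L^2}\lesssim n^{-D-10}\norm{\Delta f_{\zeta_0,\alpha}}_{L^{2+a}}$ by H\"older, and this is where the hypothesis $\norm{\Delta f}_{L^{2+a}}\le n^D\norm{\Delta f}_{L^1}$ enters---not via ``$L^p$ moments of the singular value tail for $p$ slightly below $2$'', which does not appear in the argument. Because the $\xi_i$ are independent of $\Xf$, the smallest-singular-value bound and the local law apply at each $\xi_i$ with very high probability, and a union bound over the $n^{O(1)}$ sample points is harmless.
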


Theorem~\ref{thm:local_elliptic_law} will be proved at the end of Section~\ref{sec:proof_local_elliptic_law} below. 
We also obtain complete delocalisation of eigenvectors of $\Xf$ in any basis. 

\begin{corollary}[Isotropic eigenvector delocalisation] \label{cor:delocalization} 
Let $\delta \in (0,1)$ and denote 
\[ V_{\delta} = \{ \uf \in \C^n \setminus \{ 0\} \colon \Xf\uf = \zeta \uf \text{ for some } \zeta \in E_{\varrho,\delta}\}, \] 
 the set of eigenvectors of $\Xf$ with eigenvalue in $E_{\varrho,\delta}$. 
Then, for any $\eps>0$ and $\nu >0$, there is $C>0$ such that 
\[ \P \Big(  \abs{\scalar{\wf}{\uf}} \leq n^{-1/2 + \eps} \norm{\wf}\norm{\uf} \text{ for all }\uf \in V_\delta 
\Big) \geq 1- C n^{-\nu}  
\] 
for all $\wf \in \C^n$ and all $n \in \N$. Here, $\scalar{\wf}{\uf}$ denotes the Euclidean scalar product of $\wf$ and $\uf$. 
\end{corollary}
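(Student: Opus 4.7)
The plan is to extract isotropic delocalisation from the isotropic local law for the Hermitisation resolvent $\Gf(\zeta,\ii\eta)=(\Hf_\zeta-\ii\eta)^{-1}$ that is established en route to Theorem~\ref{thm:local_elliptic_law}; the introduction emphasises that $\scalar{\xf}{(\Gf-\Mf)\yf}\to 0$ is obtained isotropically for arbitrary deterministic $\xf,\yf$. Concretely I will invoke, for each fixed $\zeta\in E_{\varrho,\delta/2}$, $\eta\in[n^{-1+\eps},1]$ and unit vector $\wh{\wf}\in\C^{2n}$, a high-probability bound of the form
\[
 \absb{\scalar{\wh{\wf}}{(\Gf(\zeta,\ii\eta)-\Mf(\zeta,\ii\eta))\wh{\wf}}} \;\le\; \frac{n^{\eps}}{\sqrt{n\eta}},
\]
together with the uniform boundedness of $\Im\Mf(\zeta,\ii\eta)$ in the bulk, which is part of the same MDE analysis underlying the theorem.

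The algebraic link to eigenvectors of $\Xf$ is the block-inverse identity
\[
 \Gf^{22}(\zeta,\ii\eta)\;=\;\ii\eta\bigl((\Xf-\zeta)^{\ast}(\Xf-\zeta)+\eta^{2}\bigr)^{-1},
\]
combined with the observation that any $\uf\in V_\delta$ with $\Xf\uf=\zeta\uf$ lies in the kernel of $(\Xf-\zeta)^{\ast}(\Xf-\zeta)$ and satisfies $\norm{(\Xf-\zeta')\uf}=\abs{\zeta-\zeta'}\norm{\uf}$ for any nearby $\zeta'\in\C$. Expanding $\wf$ and $\uf$ in the orthonormal eigenbasis of the positive operator $(\Xf-\zeta')^{\ast}(\Xf-\zeta')$ and applying Cauchy--Schwarz yields, for unit $\wf\in\C^n$ and unit $\uf\in V_\delta$,
\[
 \abs{\scalar{\wf}{\uf}}^{2} \;\le\; \bigl(\abs{\zeta-\zeta'}^{2}+\eta^{2}\bigr)\cdot \frac{1}{\eta}\,\Im\scalar{\wf}{\Gf^{22}(\zeta',\ii\eta)\wf}.
\]
If the eigenvalue $\zeta$ were deterministic, choosing $\zeta'=\zeta$ and $\eta=n^{-1+\eps}$ together with the isotropic local law would at once produce $\abs{\scalar{\wf}{\uf}}\le n^{-1/2+\ord(\eps)}$.

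Since $\zeta$ is random and $V_\delta$ is uncountable, I turn this into a quenched statement via a net. Fix $\wf$, set $\eta=n^{-1+\eps}$, and choose a deterministic $\eta$-net $\mathcal{N}\subset E_{\varrho,\delta/2}$ of cardinality $\abs{\mathcal{N}}=\ord(n^{2})$. Applying the isotropic law with $\wh{\wf}=(0,\wf/\norm{\wf})^{\top}\in\C^{2n}$ simultaneously at every $\zeta'\in\mathcal{N}$, and taking the exponent $\nu$ in the local law large enough to absorb the polynomial union bound, one obtains $\Im\scalar{\wf}{\Gf^{22}(\zeta',\ii\eta)\wf}\le C\norm{\wf}^{2}$ for all $\zeta'\in\mathcal{N}$ on an event of probability at least $1-Cn^{-\nu}$. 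For any eigenvalue $\zeta\in E_{\varrho,\delta}\subset E_{\varrho,\delta/2}$ of $\Xf$, picking $\zeta'\in\mathcal{N}$ with $\abs{\zeta-\zeta'}\le\eta$ and inserting into the displayed inequality gives $\abs{\scalar{\wf}{\uf}}^{2}\le 2C\eta\norm{\wf}^{2}=\ord(n^{-1+\eps})\norm{\wf}^{2}$, which rescales to the claimed bound for general $\wf\in\C^n$ and $\uf\in V_\delta$. The real difficulty lies in the isotropic local law itself rather than in the present corollary; the only additional item of care is that the slightly enlarged set $E_{\varrho,\delta/2}$ still lies in the bulk where $\Mf$ and the local-law error remain under uniform control, which is a standard feature of Girko's Hermitisation in the elliptic regime.
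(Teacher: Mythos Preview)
Your proof is correct and follows essentially the same strategy as the paper: both extract the eigenvector bound from the isotropic local law for $\Gf$ (Theorem~\ref{thm:local_law_H}) via the inequality $\abs{\scalar{\wf}{\uf}}^2 \le C\eta\,\Im\scalar{\xf}{\Gf(\zeta',\eta)\xf}$ with $\xf=(0,\wf)^t$, and both make the bound uniform over the random eigenvalue $\zeta$ by a discretisation of $E_{\varrho,\delta}$ together with a union bound. The paper obtains the displayed inequality at $\zeta'=\zeta$ from the spectral decomposition of $\Hf_\zeta$ and then passes to grid points via Lipschitz continuity of $\Gf$, whereas your Cauchy--Schwarz argument builds the approximation $\zeta'\approx\zeta$ directly into the inequality; this is a cosmetic difference only.
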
 

Corollary~\ref{cor:delocalization} is a consequence of the local law for the Hermitization $\Hf_\zeta$ of $\Xf$ from  \eqref{eq:def_Hf_zeta_intro}  which is the main input in the proof of Theorem~\ref{thm:local_elliptic_law}.  
Its proof will be given in Section~\ref{sec:proof_delocalization_averaged_local_law} below.

\begin{remark}[Weaker assumptions for Corollary~\ref{cor:delocalization}, exclusion of eigenvalues away from $E_\varrho$] \hfill
\begin{enumerate}[label=(\roman*)] 
\item 
For Corollary~\ref{cor:delocalization}, Theorem~\ref{thm:local_law_H} and their proofs given in Section~\ref{sec:local_law_H}, it suffices to assume that the entries of $\Xf$ satisfy 
\begin{itemize} 
\item The collection $\{ x_{ii} \colon i \in \qq{n} \} \cup \{ (x_{ij}, x_{ji}) \colon i,j \in \qq{n},\, i < j \}$ consists of independent random variables.
\item The random variables have mean zero and variance $\frac{1}{n}$, i.e.\ $\E x_{ij} = 0$ and $\E \abs{x_{ij}}^2 = \frac{1}{n}$ for all $i,j \in \qq{n}$, 
and satisfy $\E[x_{ik} x_{ki}]= \frac{\varrho}{n}$ for all $i,$ $k \in \qq{n}$ with $i < k$.
\item For any $\nu \in \N$, there is a constant $C_\nu>0$ such that $\E \abs{x_{ij}}^\nu \leq C_\nu n^{-\nu/2}$ for all $i,j \in \qq{n}$. 
\end{itemize}  
For simplicity of the presentation, we stated Theorem~\ref{thm:local_elliptic_law} and Corollary~\ref{cor:delocalization} under the same assumptions. 
 The stronger assumptions are solely needed to control the smallest singular value via a result by Nguyen--O'Rourke (see Theorem~\ref{thm:smallest_singular_value} below). 
\item 
A sufficiently strong local law also implies that there are no eigenvalues away from the support of the limiting 
eigenvalue density. This is also correct in the present setup. However, for elliptic random matrices the absence of
eigenvalues away from $E_{\varrho}$ has been shown in \cite[Theorem~2.2]{RourkeRenfrew2014} for real matrices. 
Therefore, we refrain from reproving such result here. 

Moreover, for real and complex matrices, exclusion of eigenvalues can also be derived from \cite{EKR2019} by 
combining \cite[Lemma~4.8]{EKR2019}  and the displayed equation after \cite[eq.~(7.2)]{EKR2019}.  
\end{enumerate} 
\end{remark}

We now give an overview of the remainder of this paper. 
In the next subsection, we explain a few notations and conventions used throughout this work. 
Section~\ref{sec:proof_local_elliptic_law} summarises the main ingredients and contains the proof of 
Theorem~\ref{thm:local_elliptic_law} given these ingredients. 
In Section~\ref{sec:deterministic}, we analyse the Dyson equation and its solution which is a 
deterministic approximation of the resolvent of the Hermitization of $\Xf$. 
In Section~\ref{sec:local_law_H}, this analysis is used to prove a local law for the Hermitization, 
a key input for the proof of the local law for $\Xf$.

\subsection{Notations} 

Here, we introduce and collect a few notations used throughout the paper. 
We first recall the definition $\qq{n} \defeq \{ 1, \ldots, n\}$ for $n \in \N$. 
We write $\# \Sigma$ for the number of elements of a set $\Sigma$. 
For $r >0$, we denote the disk of radius $r$ in the complex plane centred at the origin 
by $\mathbb{D}_r \defeq \{ z \in \C \colon \abs{z} < r \}$. 
For functions of $\zeta \in \C$, we write $\pt$ and $\bar \pt$ for their derivatives with respect to $\zeta$ and $\bar \zeta$, respectively, 
i.e.\ $\pt = \frac{1}{2} ( \pt_{\Re \zeta} - \ii \pt_{\Im \zeta})$ and 
 $\bar \pt = \frac{1}{2} ( \pt_{\Re \zeta} + \ii \pt_{\Im \zeta})$.
We remark that, in matrix equations, scalars are identified with the corresponding multiple of the 
identity matrix, e.g.\ $\Hf_\zeta - \ii\eta$. 
For a matrix $A \in \C^{l \times l}$, we denote its normalised trace by $\avg{A} = \frac{1}{l} \Tr A$. 

Throughout the paper, we use the convention that all generic constants are denoted by $c$ and $C$ and may always depend on the distribution of $\xi := (\xi_0, \xi_1, \xi_2)$.  
These constants are uniform in all other parameters, e.g.\ $n$, $\zeta$, etc., within specified parameter sets. 

If $f$ and $g$ are two real scalars then 
we write $f \lesssim g$ and $g \gtrsim f$ if there is a constant $C>0$ such that $f \leq C g$. 
If $f \lesssim g$ and $f \gtrsim g$ then we write $f \sim g$. 
In case, the constant $C$ depends on a parameter $\delta$ we write $\lesssim_\delta$, $\gtrsim_\delta$ and 
$\sim_\delta$, respectively. 
The same notation is also used for Hermitian matrices $f$ and $g$, where $f \leq C g$ is interpreted in a quadratic form sense. 
If $f$ is complex and $g \geq 0$ then we write $f = O(g)$ if $\abs{f} \lesssim g$. 
Similarly, $f = O_\delta(g)$ if $\abs{f}\lesssim_\delta g$. 
We omit the subscripts from $\lesssim$ and $O$ for the parameters $1-\abs{\varrho}$, $\alpha$ from \eqref{eq:def_f_zeta_0_alpha} as well as 
$\varphi$, $D$ or $a$ from Theorem~\ref{thm:local_elliptic_law}.

\section{Proof of local elliptic law} \label{sec:proof_local_elliptic_law} 

In this section, we prove Theorem~\ref{thm:local_elliptic_law}. To that end, we first collect the ingredients 
of its proof and then use them to conclude the local elliptic law. The novel ingredients will be proved in the following sections. 

The first ingredient is a basic formula due to Girko \cite{Girko1985} expressing the averaged linear statistics of $\Xf$ in a more tractable way. Indeed, since $\log$ is the fundamental solution of the Laplace equation in $\R^2$, we have 
\begin{equation} \label{eq:girko} 
 \frac{1}{n} \sum_{\xi \in \spec \Xf} f(\xi) = \frac{1}{2\pi n} \sum_{\xi \in \spec \Xf} \int_{\C} 
\Delta f(\zeta) \log \abs{\zeta - \xi} \dd^2 \zeta = \frac{1}{4 \pi n} \int_{\C} \Delta f(\zeta) \log \abs{\det \Hf_\zeta} \dd^2 \zeta, 
\end{equation}
where we introduced the Hermitization 
\begin{equation} \label{eq:def_Hf} 
\Hf_\zeta := \begin{pmatrix} 0 & \Xf- \zeta \\ (\Xf-\zeta)^* & 0 \end{pmatrix}. 
\end{equation}
The $\log$-determinant of $\Hf_\zeta$ can be easily obtained from the resolvent $\Gf(\zeta,\eta):=(\Hf_\zeta - \ii \eta)^{-1}$ through the well-known identity 
\begin{equation} \label{eq:log_det_int_im_G} 
 \log \abs{\det \Hf_\zeta} = - 2n \int_0^T \avg{\Im \Gf(\zeta, \eta)} \dd \eta + \log \abs{\det (\Hf_\zeta - \ii T)} 
\end{equation} 
for any $T>0$ (see \cite{tao2015} for the use of \eqref{eq:log_det_int_im_G} in a similar context). 
Recall that $\avg{\Rf}$ denotes the normalised trace of a matrix $\Rf \in \C^{2n\times 2n}$.  

When $n$ becomes large, $\avg{\Gf}$ is well approximated by a deterministic function which 
we introduce next. 
For each $\zeta \in \C$ and $\eta >0$, let $M \equiv M(\zeta,\eta) \in \C^{2\times 2}$ be the unique solution of 
\begin{equation} \label{eq:mde_def_self_energy} 
 -M^{-1} = \begin{pmatrix} \ii \eta & \zeta \\ \bar \zeta & \ii \eta \end{pmatrix} + \smallS[M], 
\qquad \qquad \smallS\bigg[ \begin{pmatrix} a_{11} & a_{12} \\ a_{21} & a_{22} \end{pmatrix} \bigg] 
\defeq \begin{pmatrix} a_{22} & \varrho a_{21} \\ \varrho a_{12} & a_{11} \end{pmatrix},  
\end{equation}
whose imaginary part $\Im M = \frac{1}{2\ii} ( M-M^*)$ is positive definite. 
The existence and uniqueness of $M$ is shown e.g.\ in \cite{Helton01012007}. 
In \eqref{eq:mde_def_self_energy}, $a_{11}$, $a_{12}$, $a_{21}$, $a_{22} \in \C$. 
The relation in \eqref{eq:mde_def_self_energy} is called \emph{Dyson equation} and the linear map $\smallS\colon \C^{2\times 2} \to \C^{2\times 2}$ is the \emph{self-energy operator} or \emph{self-energy}. 
It is easy to see that the unique solution $M$ of \eqref{eq:mde_def_self_energy} satisfies 
\begin{equation} \label{eq:decomposition_M} 
 M= \begin{pmatrix} \ii v & \bar b \\ b & \ii v \end{pmatrix} 
\end{equation}
 for each $\zeta \in \C$ and $\eta >0$, with some  $v \equiv v(\zeta, \eta)\in (0,\infty)$ and $b \equiv b(\zeta, \eta) \in \C$. 
The following proposition states that $\avg{\Gf}$ is well approximated by $\ii v$ on all scales 
$\eta \in [n^{-1+\gamma}, n^{100}]$ slightly above the typical eigenvalue spacing of $\Hf_\zeta$ 
around zero, which is of order $n^{-1}$ (in the bulk, i.e.\ for $\zeta \in E_{\varrho,\delta}$). 
This proposition will be proved in Section~\ref{sec:proof_delocalization_averaged_local_law} below. 
 
\begin{proposition}[Local law for $\Hf_\zeta$, averaged version] \label{pro:local_law_H_average} 
Let $v$ be as in \eqref{eq:decomposition_M}, 
$\gamma >0$ and $\delta \in (0,1)$. Then, for any $\eps>0$ and $\nu>0$, there is $C_{\eps,\nu}>0$ such that 
\[ \P \bigg( \absb{\avg{\Gf(\zeta, \eta)} - \ii v(\zeta, \eta)} \leq \frac{n^\eps}{n \eta} \bigg) \geq 1 - C_{\eps,\nu}n^{-\nu} \] 
uniformly for all $\eta \in [n^{-1 + \gamma},n^{100}]$, $\zeta \in E_{\varrho,\delta}$ and $n \in \N$. 
\end{proposition}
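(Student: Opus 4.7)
The approach follows the standard three-step MDE-based local-law scheme, adapted to the $2\times 2$ block structure of $\Hf_\zeta$. Writing $\Gf=(\Gf_{ab})_{a,b=1}^2$ in $n\times n$ blocks, introduce the partial-trace matrix $\mf(\zeta,\eta)\defeq(\avg{\Gf_{ab}})_{a,b=1}^2\in\C^{2\times 2}$; from the definition of $\Hf_\zeta$ one has $\avg{\Gf}=\tfrac{1}{2}(\mf_{11}+\mf_{22})$, and by \eqref{eq:decomposition_M} the claim reduces to showing $\mf-\Mf=O(n^{\eps-1}\eta^{-1})$ entrywise. Combining the resolvent identity with a cumulant expansion in the independent pairs $(x_{ij},x_{ji})$ should yield a perturbed MDE
\[
 -\mf^{-1} = \begin{pmatrix} \ii\eta & \zeta \\ \bar\zeta & \ii\eta \end{pmatrix} + \smallS[\mf] + \Deltaf,
\]
where $\Deltaf=\Deltaf(\zeta,\eta)$ is a renormalisation error. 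Its control in high probability is the purpose of the isotropic $L^p$-norm framework on random matrices advertised in the introduction: this should simultaneously produce $\scalar{\xf}{(\Gf-\Mf)\yf}=O(\Psi_{\rm iso}\norm{\xf}\norm{\yf})$ at the isotropic scale $\Psi_{\rm iso}\defeq(n\eta)^{-1/2}$ and $\norm{\Deltaf}=O(\Psi_{\rm av})$ at the averaged scale $\Psi_{\rm av}\defeq(n\eta)^{-1}$.

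Next I would analyse the stability of \eqref{eq:mde_def_self_energy} around $\Mf$. Setting $\Df\defeq\mf-\Mf$ and linearising gives $\cL[\Df]=\Mf\Deltaf\Mf+O(\abs{\Df}^2)$, where $\cL[A]\defeq A-\Mf\smallS[A]\Mf$ acts on $\C^{2\times 2}$. In line with the introduction, for $\zeta\in E_{\varrho,\delta}$ the operator $\cL$ should admit exactly one small eigenvalue of order $\eta$, associated with the tangent $\pt_\eta\Mf$ of the solution manifold, while the remaining three eigenvalues are bounded below by a $\delta$-dependent constant. Let $\pi$ denote the spectral projection onto this unstable direction. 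On $(1-\pi)\C^{2\times 2}$ one inverts $\cL$ directly, obtaining $\norm{(1-\pi)\Df}\lesssim\norm{\Deltaf}+\norm{\Df}^2$; for $\pi\Df$ one expands the nonlinearity one step further to expose a non-degenerate scalar quadratic coefficient $c_0=c_0(v,b)\neq 0$, leading to an inequality $c_0\abs{\pi\Df}^2+O(\eta)\abs{\pi\Df}\lesssim\norm{\Deltaf}+\norm{(1-\pi)\Df}$ whose relevant branch yields $\abs{\pi\Df}\lesssim\norm{\Deltaf}/\eta$.

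A continuity argument in $\eta$ then closes the bootstrap: starting from $\eta=n^{100}$, where $\mf-\Mf=O(n^{-100})$ by the trivial bound $\norm{\Gf}\le\eta^{-1}$, and descending to $\eta=n^{-1+\gamma}$ along a polynomial grid while using Lipschitz dependence of both sides in $\eta$, one upgrades the conditional bound to $\abs{\Df}\lesssim n^{\eps}\Psi_{\rm av}$ on the entire interval. The main obstacle I expect is the joint handling of the isotropic and averaged scales within the $L^p$-norm framework on random matrices: the improvement from $\Psi_{\rm iso}$ to the square $\Psi_{\rm av}$ encodes the fluctuation-averaging mechanism, which in the elliptic setting must accommodate the $\varrho$-dependent coupling between the $(1,2)$- and $(2,1)$-blocks of $\Gf$ built into $\smallS$. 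A secondary technical difficulty is verifying that the scalar coefficient $c_0(v,b)$ stays bounded below uniformly over the compact bulk set $E_{\varrho,\delta}$, which should reduce to an elementary algebraic computation with the solution data $(v,b)$ supplied by \eqref{eq:mde_def_self_energy} and \eqref{eq:decomposition_M}.
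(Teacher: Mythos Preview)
Your overall scheme is the right one, but there is a genuine gap in the stability step that would prevent you from reaching the optimal local scale. The unstable eigenvector of $\cL$ is not $\partial_\eta M$; a direct computation from \eqref{eq:decomposition_M} and \eqref{eq:abs_M_squared} gives $\cL E_- = \tfrac{\eta}{\eta+v}\,E_-$ with $E_-=\mathrm{diag}(1,-1)$, while $\partial_\eta M$ has equal diagonal entries and hence lies in $E_-^\perp$, the stable subspace. The crucial observation you are missing is that the partial trace of $\Gf-\Mf$ is \emph{exactly} orthogonal to $E_-$, because $\avg{\Gf_{11}}=\avg{\Gf_{22}}$ holds deterministically for the resolvent of any matrix with the off--diagonal block structure \eqref{eq:def_Hf}. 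Consequently $\pi\Df=0$ identically, and one simply inverts $\cL$ on $E_-^\perp$ using a bound of the type $\norm{\cL^{-1}|_{E_-^\perp}}\lesssim_\delta 1$ in the bulk; no quadratic analysis of the bad direction is needed.

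Your proposed route through a nondegenerate quadratic coefficient $c_0$ would not close: from $c_0|\pi\Df|^2+O(\eta)|\pi\Df|\lesssim\Psi_{\rm av}$ you obtain at best $|\pi\Df|\lesssim\min\bigl(\sqrt{\Psi_{\rm av}},\,\Psi_{\rm av}/\eta\bigr)$, i.e.\ $(n\eta)^{-1/2}$ rather than $(n\eta)^{-1}$, and the branch $\Psi_{\rm av}/\eta=(n\eta^2)^{-1}$ diverges for $\eta<n^{-1/2}$. Either way the averaged bound is lost. Once you use the exact orthogonality $\mf-M\perp E_-$, the rest of your outline---perturbed MDE for the partial trace, cumulant-based error bounds giving $\Psi_{\rm iso}$ isotropically and $\Psi_{\rm av}$ on averages, continuity in $\eta$ from the global regime down to $\eta=n^{-1+\gamma}$---matches the paper's argument.
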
 

The previous proposition directly implies the bound on the number of small singular values of $\Xf-\zeta$ 
in the next lemma. 
The singular values of $\Xf-\zeta$ coincide with the moduli of the eigenvalues of $\Hf_\zeta$. 
The latter are denoted by $\{\lambda_1(\zeta)$, \ldots, $\lambda_{2n}(\zeta)\} = \spec\Hf_\zeta$ in the following. 

\begin{lemma}[Number of small singular values of $\Xf-\zeta$] \label{lem:number_small_singular_values} 
Let $\delta \in (0,1)$ and $\gamma>0$. 
Then, for each $\nu>0$, there is a constant $C_{\nu}>0$ such that 
\[ \P \Big( \# \bigl\{ i \in \qq{2n} \colon \abs{\lambda_i(\zeta)} \leq \eta \bigr\} \lesssim n \eta \Big) \geq 1 -  C_{\nu} n^{-\nu} \] 
uniformly for all $\eta \in [n^{-1 +\gamma},n^{100}]$, $\zeta \in E_{\varrho,\delta}$ and $n \in \N$. 
\end{lemma}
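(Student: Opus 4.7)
The strategy is the standard one linking the counting function of small eigenvalues of a Hermitian matrix to the imaginary part of its resolvent on the imaginary axis. Writing $\Hf_\zeta$ in its spectral decomposition gives the identity
\[ \avg{\Im\Gf(\zeta,\eta)} \;=\; \frac{1}{2n}\sum_{i=1}^{2n}\frac{\eta}{\lambda_i(\zeta)^2+\eta^2}. \]
Each summand with $\abs{\lambda_i(\zeta)}\le\eta$ is bounded below by $\eta/(2\eta^2)=1/(2\eta)$, so
\[ \#\bigl\{i\in\qq{2n}\colon\abs{\lambda_i(\zeta)}\le\eta\bigr\} \;\le\; 4n\eta\,\avg{\Im\Gf(\zeta,\eta)}. \]
Thus it suffices to control $\avg{\Im\Gf(\zeta,\eta)}$ from above with high probability.

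For this I would invoke Proposition~\ref{pro:local_law_H_average}: choosing some $\eps\in(0,\gamma)$ (and shrinking the exceptional set by a union bound if needed), on an event of probability at least $1-C_{\eps,\nu}n^{-\nu}$ one has
\[ \avg{\Im\Gf(\zeta,\eta)} \;\le\; v(\zeta,\eta) + \frac{n^\eps}{n\eta}. \]
Combining with the previous display yields
\[ \#\bigl\{i\in\qq{2n}\colon\abs{\lambda_i(\zeta)}\le\eta\bigr\} \;\lesssim\; n\eta\,v(\zeta,\eta) + n^\eps \]
on that event.

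It then remains to note that the deterministic function $v(\zeta,\eta)$ is uniformly bounded on the parameter range under consideration: for bounded $\eta$ and $\zeta\in E_{\varrho,\delta}$ this follows from the analysis of the Dyson equation \eqref{eq:mde_def_self_energy} in Section~\ref{sec:deterministic} (in the bulk $v$ is bounded uniformly down to $\eta=0$), while for large $\eta$ the Dyson equation forces $M=-(\ii\eta)^{-1}+O(\eta^{-2})$, hence $v\lesssim\eta^{-1}\lesssim 1$. In either case $n\eta\,v\lesssim n\eta$. Finally, since $\eta\ge n^{-1+\gamma}$, we have $n\eta\ge n^\gamma\ge n^\eps$, so the additive error $n^\eps$ is absorbed into the $n\eta$ term, yielding the claimed bound.

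The only mildly delicate point is the uniform boundedness of $v$ across the whole range $\eta\in[n^{-1+\gamma},n^{100}]$, but this is a purely deterministic statement about the Dyson equation that is handled in Section~\ref{sec:deterministic}; once it is in hand the argument is a short manipulation of the spectral identity together with Proposition~\ref{pro:local_law_H_average}.
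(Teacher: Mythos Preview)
Your proof is correct and follows essentially the same approach as the paper: both use the spectral identity to bound the counting function by $n\eta\,\avg{\Im\Gf(\zeta,\eta)}$, then control $\avg{\Im\Gf}$ via Proposition~\ref{pro:local_law_H_average} together with the uniform bound $v\lesssim 1$ (the paper cites this as the first estimate in \eqref{eq:integral_v_bounds}). The only cosmetic difference is that the paper phrases the intermediate step as $\abs{\Tr\Gf}\lesssim n$ rather than separating out the $n\eta\,v$ and $n^\eps$ contributions as you do.
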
 

As we have seen in the formulation of Proposition~\ref{pro:local_law_H_average} and Lemma~\ref{lem:number_small_singular_values} the following notion of high probability events is useful. It will be used extensively in the proofs of Lemma~\ref{lem:number_small_singular_values} and Theorem~\ref{thm:local_elliptic_law} below. 

\begin{definition}[With very high probability] 
The (sequence of) events $(A_n)_{n \in \N}$ occur \emph{with very high probability} if for every $\nu>0$ 
there is $C_\nu >0$ such that, for all $n \in \N$, 
\[ \P \big( A_n \big) \geq 1 - C_\nu n^{-\nu}. \] 
\end{definition} 

\begin{proof}[Proof of Lemma~\ref{lem:number_small_singular_values}]  
From the first bound in \eqref{eq:integral_v_bounds} in Lemma~\ref{lem:log_potential_sigma_rho} below, 
we conclude that the trace of $\Gf$ is bounded by $n$, i.e.\ $\abs{\Tr \Gf(\zeta, \eta)} \lesssim n$, 
with very high probability due to Proposition~\ref{pro:local_law_H_average}. 
Since 
\[ \frac{\# \Sigma_\eta}{2\eta} \leq \sum_{i \in \Sigma_\eta} \frac{\eta}{\eta^2 + \lambda_i(\zeta)^2} 
\leq \Im \Tr \Gf(\zeta,\eta) \lesssim n, \] 
where $\Sigma_\eta \defeq \{ i \in \qq{2n} \colon \abs{\lambda_i(\zeta)} \leq \eta \}$, this 
proves Lemma~\ref{lem:number_small_singular_values}. 
\end{proof} 

The smallest singular value of $\Xf-\zeta$, which coincides with $\min_{j \in \qq{2n}} \abs{\lambda_j(\zeta)}$, is controlled in the following result from \cite[Theorem~1.9]{NguyenORourke2015}. See also \cite{GoetzeNaumovTikhomirov2015} for a related result.

\begin{theorem}[Smallest singular value of $\Xf-\zeta$] \label{thm:smallest_singular_value} 
Let $a>0$. 
Then, for any $B >0$, there are $A>0$ and $C>0$ such that 
\[ \P \big( \min_{j \in \qq{2n}} \abs{\lambda_j(\zeta)}  \leq n^{-A} \big) \leq C n^{-B} \] 
uniformly for all $n \in \N$ and $\zeta \in \C$ with $\abs{\zeta} \leq n^a$. 
\end{theorem}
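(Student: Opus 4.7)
Since the statement is an import of \cite[Theorem~1.9]{NguyenORourke2015}, my plan is to outline the Rudelson--Vershynin type scheme employed there, adapted to the elliptic setting. The quantity $\min_{j} \abs{\lambda_j(\zeta)}$ equals the smallest singular value $\sigma_{\min}(\Xf - \zeta)$, so the target is a polynomial lower bound $\sigma_{\min}(\Xf - \zeta) \geq n^{-A}$ holding with probability at least $1 - C n^{-B}$.

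The starting point is the variational identity $\sigma_{\min}(\Xf - \zeta) = \inf_{\norm{u}=1} \norm{(\Xf - \zeta)u}$, and I would split the unit sphere $\mathbb{S}^{n-1}$ into compressible vectors (those close in $\ell^2$ to a sparse vector) and incompressible ones. For compressible $u$, I would combine a single-vector small-ball estimate for $(\Xf - \zeta)u$ (obtained row-by-row from a Littlewood--Offord/Esseen inequality applied to the pair distribution of $(\xi_1, \xi_2)$) with a union bound over an $\varepsilon$-net; the polynomially bounded operator norm $\norm{\Xf - \zeta} \lesssim \sqrt{n} + \abs{\zeta}$, which holds with very high probability under the finite-moment hypothesis and the assumed ball $\abs{\zeta} \leq n^a$, makes the net argument succeed. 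For incompressible $u$ I would use the geometric reduction to column distances: if $(\Xf - \zeta)u = w$ with $\norm{w}$ small, then for each $k \in \qq{n}$ the distance $d_k$ from the $k$-th column of $\Xf - \zeta$ to the span $H_k$ of the remaining columns is controlled through $\abs{\scalar{n_k}{\,(\Xf - \zeta)_{\cdot k}\,}}$, where $n_k$ is a unit normal to $H_k$.

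The main obstacle, and the reason one cannot cite the i.i.d.\ Rudelson--Vershynin result off the shelf, is that the $k$-th column of $\Xf$ shares the pair $(x_{ki}, x_{ik})$ with every other column, so $(\Xf - \zeta)_{\cdot k}$ is \emph{not} independent of $H_k$. To decouple, I would employ the symmetric/antisymmetric decomposition
\[
 \Xf = \tfrac{1}{2}(\Xf + \Xf^T) + \tfrac{1}{2}(\Xf - \Xf^T) =: \Xf_s + \Xf_a,
\]
whose above-diagonal entries are independent under the covariance structure imposed in Section~\ref{sec:assumption}, with non-degenerate second moments as long as $\abs{\varrho} < 1$. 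Conditioning on $\Xf_a$ (or equivalently on $H_k$) leaves enough conditional randomness in $\Xf_s$ to run an anti-concentration bound for $\scalar{n_k}{\,(\Xf - \zeta)_{\cdot k}\,}$ at scale $n^{-A}$, provided the normal $n_k$ has a \emph{polynomial least common denominator}. Establishing this structural property of the random normal is the usual technical core: it is obtained by running the compressible/incompressible dichotomy iteratively on the hyperplanes $H_k$ themselves and showing, via a further net argument against vectors with small LCD, that the exceptional event where $n_k$ is structured has probability $O(n^{-B})$.

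Once these two pieces are in place, a union bound over $k \in \qq{n}$ controls $\min_k d_k$, and $\sigma_{\min}(\Xf - \zeta) \gtrsim n^{-1/2} \min_k d_k$ on the incompressible event yields the desired probability estimate. The uniformity in $\zeta$ with $\abs{\zeta} \leq n^a$ costs only logarithmic factors, absorbed into the choice of the exponent $A = A(a,B)$, since the whole argument is stable under the deterministic shift $-\zeta I$ (which only changes the diagonal, preserving the independence structure used above).
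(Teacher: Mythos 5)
The paper itself does not prove Theorem~\ref{thm:smallest_singular_value}: it is imported verbatim from \cite[Theorem~1.9]{NguyenORourke2015} with a one-line attribution, as you correctly recognize, so your outline is extra content rather than a reproduction of anything in this paper. The skeleton you describe (compressible/incompressible dichotomy, reduction to the distance from the $k$-th column to the hyperplane $H_k$ spanned by the others, least common denominator control of the random normal $n_k$) is indeed the Rudelson--Vershynin architecture underlying that reference.

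Your decoupling step, however, has a concrete gap. The split $\Xf = \Xf_s + \Xf_a$ does not render $\Xf_s$ and $\Xf_a$ independent for non-Gaussian entries: the pair $\bigl(x_{ij}+x_{ji},\, x_{ij}-x_{ji}\bigr)$ is merely \emph{uncorrelated} under the second-moment structure of Section~\ref{sec:assumption}, and uncorrelated does not imply independent, so conditioning on $\Xf_a$ does not leave $\Xf_s$ with its unconditioned law. Moreover, the parenthetical ``or equivalently on $H_k$'' is false: $H_k$ is generated by the columns of $\Xf-\zeta$ with index $j \neq k$, so conditioning on $H_k$ fixes the entries $x_{ki}$ ($i \neq k$) but is neither equivalent to nor implied by fixing the antisymmetric part. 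The decoupling that actually makes the distance argument work is to condition on the $\sigma$-algebra generated by those other columns and then run anti-concentration for $\scalar{n_k}{(\Xf-\zeta)_{\cdot k}}$ against the \emph{conditional} law of $x_{ik}$ given $x_{ki}$, whose non-degeneracy is what $\abs{\varrho}<1$ buys you. Even then, a non-degenerate conditional variance alone does not yield a small-ball bound at scale $n^{-A}$; obtaining the required L\'evy-concentration input from only the stated moment hypotheses is one of the genuine technical points of \cite{NguyenORourke2015}, which your outline elides.
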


The next lemma relates $v$ from \eqref{eq:decomposition_M} with the elliptic law $\sigma_\varrho$ defined in \eqref{eq:def_sigma_E} and will be proved in Section~\ref{sec:proof_lemma_L_sigma} below. 
The relation is expected due to the identities in \eqref{eq:girko} and \eqref{eq:log_det_int_im_G}. 

\begin{lemma}[$\sigma_\varrho$ as distributional derivative] \label{lem:log_potential_sigma_rho} 
For every $\psi \in C_0^2(\C)$ with $\supp \psi \subset E_{\varrho,\delta}$ for some $\delta \in (0,1)$, we have 
\begin{equation} \label{eq:dist_derivative_L} 
\frac{1}{2\pi} \int_{\C} \Delta \psi(\zeta) L(\zeta) \dd^2 \zeta = \int_{\C} \psi(\zeta) \sigma_\varrho(\zeta) \dd^2 \zeta, 
\qquad \qquad 
L(\zeta) \defeq - \int_0^\infty v(\zeta,\eta) - \frac{1}{1 + \eta} \dd \eta,  
\end{equation}
where the integral in the definition of $L$ exists in Lebesgue sense for all $\zeta \in \C$. 

Moreover, uniformly for all $\eta \in (0,\infty)$, $\zeta \in \DD_{10}$ and $T \in [1,\infty)$, we have 
\begin{equation} \label{eq:integral_v_bounds} 
v(\zeta, \eta) \leq 2( 1 + \eta)^{-1}, \qquad \quad  
 \int_0^T \absbb{v(\zeta,\eta) - \frac{1}{1 + \eta}} \, \dd \eta \lesssim 1, \qquad \quad \int_T^\infty \absbb{v(\zeta,\eta) - \frac{1}{1+\eta}} \, \dd \eta \lesssim T^{-1}. 
\end{equation}
\end{lemma}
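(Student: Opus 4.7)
The lemma consists of two independent parts: the pointwise bounds \eqref{eq:integral_v_bounds} on $v$, and the distributional identity \eqref{eq:dist_derivative_L}. I would treat them in that order, since the bounds are used to justify the manipulations in the proof of the identity.

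\textbf{Pointwise bounds.} I would first reduce the matrix Dyson equation \eqref{eq:mde_def_self_energy} to a scalar system using the ansatz \eqref{eq:decomposition_M}: computing $-M^{-1}$ block-wise and matching entries yields $v^2 + |b|^2 = v/(v+\eta)$ together with explicit expressions for $\Re b$ and $\Im b$ in terms of $v, \eta, \varrho, \zeta$. Eliminating $b$ gives the implicit equation
\begin{equation*}
v(v+\eta)\biggl[1 + \frac{(\Re\zeta)^2}{(v(1+\varrho)+\eta)^2} + \frac{(\Im\zeta)^2}{(v(1-\varrho)+\eta)^2}\biggr] = 1,
\end{equation*}
from which $v(v+\eta) \leq 1$ is immediate; splitting into $\eta \leq 1$ and $\eta \geq 1$ yields $v \leq 2/(1+\eta)$. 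A large-$\eta$ expansion of the implicit equation, using that $|b|^2 = O(|\zeta|^2/\eta^4)$ at large $\eta$, gives $v = 1/(1+\eta) + O(1/\eta^2)$ uniformly in $\zeta \in \DD_{10}$. Combined with the trivial bound $|v - 1/(1+\eta)| \lesssim 1$ for $\eta \leq 1$, both integral estimates in \eqref{eq:integral_v_bounds} follow by splitting the integrations at $\eta = 1$.

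\textbf{Distributional identity.} I would prove \eqref{eq:dist_derivative_L} by directly computing the Laplacian of $L$ inside $E_{\varrho,\delta}$. For $\psi \in C_0^2(\C)$ with support in $E_{\varrho,\delta}$, the identity $\int_\C \Delta\psi(\zeta)\,\dd^2\zeta = 0$ together with Fubini (justified via the bounds from the first part) and integration by parts in $\zeta$ reduces \eqref{eq:dist_derivative_L} to the pointwise statement $\int_0^\infty \Delta v(\zeta,\eta)\,\dd\eta = -2/(1-\varrho^2)$ for $\zeta \in E_{\varrho,\delta}$. A closed-form expression for $\Delta v$ is obtained by differentiating the scalar Dyson equation twice with respect to $\zeta$ and $\bar\zeta$, via implicit differentiation. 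Absolute convergence of $\int_0^\infty \Delta v\,\dd\eta$ follows from $|\Delta v| = O(1/\eta^2)$ at large $\eta$ (inherited from the large-$\eta$ analysis above) together with the uniform smoothness of $v$ in $\zeta$ for small $\eta$ in the bulk, where the stability operator of the Dyson equation is boundedly invertible.

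The hard part is the explicit evaluation of the $\eta$-integral. My plan is to use the parametrization of $\eta$ by $v$ (valid since $\pt_\eta v < 0$ on $(0,\infty)$ for $\zeta \in E_\varrho$), together with the boundary value $v(\zeta, 0)^2 = 1 - (\Re\zeta)^2/(1+\varrho)^2 - (\Im\zeta)^2/(1-\varrho)^2$, to reduce $\int_0^\infty \Delta v\,\dd\eta$ to an elementary integral whose $\zeta$-dependence cancels, leaving the claimed constant $-2/(1-\varrho^2) = -2\pi\sigma_\varrho(\zeta)$ for $\zeta \in E_{\varrho,\delta}$. The cancellation leading to this $\zeta$-independent constant is expected from the classical fact $\Delta U_{\sigma_\varrho}(\zeta) = 2\pi\sigma_\varrho(\zeta)$ (with $U_{\sigma_\varrho}$ the logarithmic potential of $\sigma_\varrho$), but my plan is to verify it by direct computation so as not to invoke the classical elliptic law, whose strengthened form is the main result of the paper.
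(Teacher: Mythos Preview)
Your treatment of the pointwise bounds \eqref{eq:integral_v_bounds} is essentially the same as the paper's: both derive the scalar relation $v(v+\eta)\le 1$ from \eqref{eq:abs_M_squared} and use a large-$\eta$ expansion to get $|v-(1+\eta)^{-1}|\lesssim (1+\eta)^{-2}$.

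For the distributional identity your approach diverges from the paper's. You plan to take both $\zeta$-derivatives first, obtaining $\Delta v$ via double implicit differentiation of the scalar equation, and then evaluate $\int_0^\infty \Delta v\,\dd\eta$. The paper instead factors $\Delta=4\partial\bar\partial$ and handles the two derivatives on opposite sides of the $\eta$-integral. Its key observation is the identity
\[
2\,\partial_\zeta v(\zeta,\eta)\;=\;-\,\partial_\eta b(\zeta,\eta),
\]
equivalently $2\,\partial L_\eps=-b(\cdot,\eps)$, which is verified from \eqref{eq:derivative_frac_eta_v}. With this, the $\eta$-integral of $\partial_\zeta v$ collapses by the fundamental theorem of calculus to the boundary value $-\tfrac12 b(\zeta,0)$, and a single further $\bar\partial$-derivative of the explicit formula $b(\zeta,0)=-\Re\zeta/(1+\varrho)+\ii\,\Im\zeta/(1-\varrho)$ immediately gives $-1/(1-\varrho^2)=-\pi\sigma_\varrho$. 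No second implicit differentiation, no non-trivial $\eta$-integral.

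Your route is sound in principle, but the step you label ``the hard part'' is genuinely hard as stated: the change of variable $\eta\mapsto v$ does not by itself tame $\int_0^\infty \Delta v\,\dd\eta$, since $\Delta v$ carries the full second-order implicit-function mess. To close the computation you would in effect need to discover a total-derivative structure in $\eta$, and the natural one is precisely $\Delta v = -2\,\partial_\eta\bar\partial b$, i.e.\ the paper's identity read backwards. So your plan should work, but the paper's organisation via $b$ is what makes the calculation short and transparent.
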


In order to use Proposition~\ref{pro:local_law_H_average} when $\avg{\Gf}$ is integrated with respect to $\zeta$, 
we will approximate such integrals by an average of evaluations of the integrand at uniformly distributed 
points. 
This approximation is controlled by the next lemma which is a simplified version of \cite[Lemma~36]{tao2015} 
used in a similar context.

\begin{lemma}[Monte Carlo sampling] \label{lem:monte_carlo} 
Let $\Omega \subset \C$ be bounded and of positive Lebesgue measure. 
Let $\mu$ be the normalised Lebesgue measure on $\Omega$ and $F \colon \Omega \to \C$ square-integrable with respect to $\mu$. 
For $m \in \N$, 
let $\xi_1, \ldots, \xi_m$ be independent random variables distributed according to $\mu$. 

Then, for any $\delta >0$, we have 
\[ \P \bigg( \absB{\frac{1}{m} \sum_{i=1}^m F(\xi_i) - \int_\Omega F \dd \mu} 
\leq \frac{1}{\sqrt{m\delta}} \Big(\int_\Omega \absB{F - \int_\Omega F \dd \mu }^2 \dd \mu\Big)^{1/2} \bigg) 
\geq 1 - \delta. \] 
\end{lemma}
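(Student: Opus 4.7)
The plan is a direct application of Chebyshev's inequality to the unbiased Monte Carlo estimator $S_m \defeq \frac{1}{m}\sum_{i=1}^m F(\xi_i)$. Denoting $\bar F \defeq \int_\Omega F \dd \mu$, the independence and identical distribution of $\xi_1,\ldots,\xi_m$ immediately give $\E S_m = \bar F$. For the variance I would expand
\[
\E\abs{S_m - \bar F}^2 = \frac{1}{m^2} \sum_{i,j=1}^m \E\sB{\p{F(\xi_i) - \bar F}\ol{\p{F(\xi_j) - \bar F}}}
\]
and observe that the centred complex-valued variables $F(\xi_i) - \bar F$ are i.i.d.\ with zero mean, so only the diagonal terms $i = j$ survive. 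This yields
\[
\E\abs{S_m - \bar F}^2 \;=\; \frac{1}{m} \int_\Omega \abs{F - \bar F}^2 \dd\mu \;=:\; \frac{\sigma^2}{m}.
\]

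Applying Markov's inequality to the nonnegative random variable $\abs{S_m - \bar F}^2$ at threshold $\sigma^2/(m\delta)$ then gives
\[
\P\pa{\abs{S_m - \bar F}^2 > \frac{\sigma^2}{m\delta}} \;\leq\; \frac{\E\abs{S_m - \bar F}^2}{\sigma^2/(m\delta)} \;=\; \delta,
\]
and taking square roots in the complementary event is exactly the stated bound. There is no genuine obstacle here: this is the textbook second-moment estimate underlying the $1/\sqrt{m}$ convergence rate of Monte Carlo integration. The only minor point worth noting is that $F$ is complex-valued rather than real, which is handled transparently by writing the variance in the form $\E\abs{\genarg}^2$, so that the off-diagonal cross-term expectations vanish by independence and centering of the $F(\xi_i) - \bar F$, and square-integrability of $F$ ensures $\sigma^2 < \infty$.
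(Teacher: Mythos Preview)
Your proof is correct and follows essentially the same approach as the paper, which simply notes that the $F(\xi_i)$ are i.i.d.\ with mean $\int_\Omega F\,\dd\mu$ and variance $\int_\Omega\abs{F-\int_\Omega F\,\dd\mu}^2\,\dd\mu$, and then applies Markov's inequality. Your write-up just makes the variance computation and the Chebyshev step more explicit.
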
 

\begin{proof} 
The  i.i.d.\ random variables $F(\xi_1), \ldots, F(\xi_m)$ have mean  
$\int_\Omega F \dd \mu$ and  variance $\int_\Omega \abs{F - \int_\Omega F \dd \mu}^2 \dd \mu$. 
Thus,   Markov's inequality implies Lemma~\ref{lem:monte_carlo}. 
\end{proof} 

We now combine the results introduced above in order to prove Theorem~\ref{thm:local_elliptic_law}. 
The argument is very similar to the proof 
of \cite[Theorem 2.7]{Altcorrcirc}. However, we detail it here for the convenience of the reader. 

\begin{proof}[Proof of Theorem~\ref{thm:local_elliptic_law}] 
Set $\Omega \defeq E_{\varrho,\delta/2}$. 
Let $f \in C_0^2(\C)$ with $\supp f \subset \DD_{\varphi}$ and $\norm{\Delta f}_{\mathrm{L}^{2+a}}\leq n^D \norm{\Delta f}_{\mathrm{L}^1}$.  
Owing to the definition of $f_{\zeta_0,\alpha}$ in \eqref{eq:def_f_zeta_0_alpha} and $\supp f \subset \DD_{\varphi}$, we have $\supp f_{\zeta_0,\alpha} \subset \Omega$ if $n$ is sufficiently large. 

Since $\Omega \subset \DD_{10}$, combining the first step in \eqref{eq:girko}, \eqref{eq:dist_derivative_L} and the last estimate in \eqref{eq:integral_v_bounds} yields 
\begin{equation} \label{eq:formula_diff_linear_statistics} 
\frac{1}{n}\sum_{\xi \in \spec \Xf} f_{\zeta_0,\alpha}(\xi) - \int_{\C} f_{\zeta_0,\alpha}(\zeta) \sigma_\varrho(\zeta) \dd^2 \zeta
= \int_{\Omega} F(\zeta) \dd \mu(\zeta) + O\big( T^{-1} \norm{\Delta f}_{\mathrm{L}^1} n^{2\alpha} \big). 
\end{equation}
Here, we denoted by $\mu$ the normalised Lebesgue measure on $\Omega$ and by $F$ the function defined through 
\[ F(\zeta) \defeq \frac{\abs{\Omega}}{2\pi} \Delta f_{\zeta_0,\alpha}(\zeta) h(\zeta), \qquad 
h(\zeta) \defeq \frac{1}{n} \sum_{\xi \in \spec \Xf} \log \abs{\xi-\zeta} + \int_0^T \bigg( v(\zeta,\eta) - \frac{1}{1+ \eta} \bigg) \dd \eta . 
\]  
The rest of this proof is devoted to estimating $\int_\Omega F \dd \mu$. 
We now apply Lemma~\ref{lem:monte_carlo}. 
The function $\zeta \mapsto \log\abs{\xi-\zeta}$ is in $L^p(\Omega)$ for all $p \in [1,\infty)$. 
Hence, the second bound in \eqref{eq:integral_v_bounds} implies that, for any $p \in [1,\infty)$, 
we have  $\norm{h}_{\mathrm{L}^p(\Omega)} \lesssim_p 1$ uniformly for $T \geq 1$.  
In particular, $F$ is square-integrable on $\Omega$. For any $\nu >0$, we apply Lemma~\ref{lem:monte_carlo} with 
$\delta = n^{-\nu}$ and $m = n^{\nu + 2D + 20}$ to get 
\begin{equation} \label{eq:int_F_approx_sum_F} 
\absbb{\int_\Omega F(\zeta) \dd \mu(\zeta) - \frac{1}{m} \sum_{i=1}^m F(\xi_i)} \lesssim 
n^{-D -10 + 4 \alpha} \norm{\Delta f}_{\mathrm{L}^{2+a}}  
\end{equation}
with probability at least $1-n^{-\nu}$. Here, $\xi_1$, \ldots, $\xi_m$ are uniformly distributed on $\Omega$. 

We now choose $T \defeq n^{100}$ and show that, for all $\eps>0$, we have 
\begin{equation} \label{eq:F_small} 
 \abs{F(\zeta)} \leq n^\eps n^{-1} \abs{\Delta f_{\zeta_0,\alpha} (\zeta)}  
\end{equation}
with very high probability for every $\zeta \in \Omega$. 

First, we decompose $h$. To that end, we introduce 
\begin{align*} 
h_1(\zeta) & \defeq \int_{n^{-1 + \eps}}^T v(\zeta,\eta) - \avg{\Im \Gf(\zeta,\eta)} \dd \eta , &  h_2(\zeta) & \defeq -\int_0^{n^{-1+\eps}} \avg{\Im \Gf(\zeta,\eta)} \dd \eta, \\ 
h_3(\zeta) & \defeq \frac{1}{4n} \sum_{i \in \qq{2n}}  \log \bigg( 1 + \frac{\lambda_i(\zeta)^2}{T^2} \bigg) - \log \bigg( 1 + \frac{1}{T} \bigg), & h_4(\zeta) & \defeq \int_0^{n^{-1 + \eps}} v(\zeta,\eta) \dd \eta. 
\end{align*} 
Using $\int_0^T (1+ \eta)^{-1} \dd \eta = \log( 1 + T)$, the second step in \eqref{eq:girko} and \eqref{eq:log_det_int_im_G}, 
we obtain $h = h_1 + h_2 + h_3 + h_4$.  
Therefore, in order to prove \eqref{eq:F_small}, it suffices to show that $\abs{h_i(\zeta)} \leq n^{-1 + \eps}$ with very high 
probability for $i  \in \qq{4}$. 

Using the Lipschitz-continuity of $v(\zeta,\eta) - \avg{\Im \Gf(\zeta,\eta)}$ in $\eta$ and a grid-argument in $\eta$, we conclude from Proposition~\ref{pro:local_law_H_average} with $\gamma = \eps$ that 
$\abs{h_1(\zeta)} \leq n^{-1 + \eps}$ with very high probability. 

The spectral theorem for $\Hf_\zeta$ shows that using the short-hand $\lambda_j \equiv \lambda_j(\zeta)$, we have 
\[-h_2(\zeta) = \frac{1}{4n} \sum_{j \in \qq{2n}} \log \bigg( 1 + \frac{n^{-2 + 2 \eps}}{\lambda_j^2} \bigg). \] 
We decompose the sum into the three regimes, $\abs{\lambda_j}<n^{-1 + \eps}$, $\abs{\lambda_j} \in [n^{-1 + \eps},n^{-1/2}]$ and $\abs{\lambda_j} >n^{-1/2}$ and estimate the sum separately in each of them. 
Owing to Lemma~\ref{lem:number_small_singular_values} with $\eta = n^{-1 + \eps}$, we obtain 
\[ \sum_{\abs{\lambda_j} < n^{-1 + \eps}} \log\bigg(1 + \frac{n^{-2 + 2\eps}}{\lambda_j^2} \bigg) \lesssim  
n^{2\eps}  
\] 
as $\abs{\log \abs{\lambda_j}} \leq n^\eps$ for all $j \in \qq{2n}$ with very high probability by Theorem~\ref{thm:smallest_singular_value}. 
We introduce $\eta_k \defeq 2^k n^{-1 + \eps}$ and dyadically decompose $[n^{-1 + \eps},n^{-1/2}]$ into 
intervals $[\eta_k,\eta_{k+1}]$. This yields 
\[ \sum_{\abs{\lambda_j} \in [n^{-1 + \eps},n^{-1/2}]} \log\bigg(1 + \frac{n^{-2 + 2\eps}}{\lambda_j^2} \bigg) \leq \sum_{k = 0}^K \sum_{\abs{\lambda_j} \in [\eta_k,\eta_{k+1}]} 
\log\bigg(1 + \frac{n^{-2 + 2\eps}}{\lambda_j^2} \bigg) \lesssim n^{\eps}
\] 
for some $K = O(\log n)$, where we used $\log(1 + x) \leq x$ for $x \geq 0$ and $\# \{ i \colon \abs{\lambda_i} \in [\eta_k,\eta_{k+1}]\} \leq  \# \{ i \colon \abs{\lambda_i} \leq \eta_{k+1}\} \lesssim 2^{k+1} n^\eps$ with very high probability due to Lemma~\ref{lem:number_small_singular_values} with $\eta = \eta_{k+1}$.  
As $\log (1 + x) \leq x$ for $x \geq 0$ and $\# \spec \Hf_\zeta = 2n$, we get 
\[ 
\sum_{\abs{\lambda_j} >n^{-1/2}} \log\bigg(1 + \frac{n^{-2 + 2\eps}}{\lambda_j^2} \bigg) \leq 2 n n^{-1 + 2\eps} = 2 n^{2\eps}. 
\] 
Therefore, $\abs{h_2(\zeta)} \lesssim n^{2\eps}$ with very high probability.

Since $\log (1 + x) \leq x$ for $x \geq 0$ and $\abs{x_{ij}} \leq n^{-1/2 + \eps}$ with very high probability,
which follows from $\E \abs{\xi_j}^\nu < \infty$ for all $\nu \in \N$ and $j \in \{0,1,2\}$, we obtain 
\[ \abs{h_3(\zeta)}  \leq \frac{1}{4nT^2} \tr (\Hf_\zeta^2) + T^{-1} \leq \frac{4}{4n T^2} \sum_{i,j=1}^{n} \big(\abs{x_{ij}}^2 + \abs{\zeta}^2 \big)  + T^{-1} \leq n^{-1} 
\] 
with very high probability. 
We trivially have $\abs{h_4(\zeta)}\leq 2 n^{-1 + \eps}$ due to the first bound in \eqref{eq:integral_v_bounds}.  
Hence, replacing $\eps$ by $\eps/3$ shows $\abs{h(\zeta)} \leq n^{-1+\eps}$ with very high probability for every 
$\zeta \in \Omega$. This implies \eqref{eq:F_small} as explained above. 

From \eqref{eq:F_small}, we conclude that 
\begin{equation} \label{eq:sum_F_small} 
\frac{1}{m} \sum_{i=1}^{m} \abs{F(\xi_i)} \leq n^{-1+\eps} \frac{1}{m} \sum_{i=1}^m \abs{\Delta f_{\zeta_0,\alpha}(\xi_i)} \leq n^{-1 + \eps + 2\alpha} \norm{\Delta f}_{\mathrm{L}^1} + n^{-D -11 + \eps + 4 \alpha} \norm{\Delta f}_{\mathrm{L}^{2+a}}  
\end{equation}
with probability at least $1-3n^{-\nu}$. 
Here, we used Lemma~\ref{lem:monte_carlo} in the second step.  

Finally, we use \eqref{eq:int_F_approx_sum_F}, \eqref{eq:sum_F_small} 
and the assumption $\norm{\Delta f}_{\mathrm{L}^{2+a}} \leq n^D \norm{\Delta f}_{\mathrm{L}^1}$
to conclude that the first term on the right-hand side of \eqref{eq:formula_diff_linear_statistics} 
is bounded by $n^{-1 + \eps + 2\alpha} \norm{\Delta f}_{\mathrm{L}^1}$ with very high probability. 
Since $T = n^{-100}$, this completes the proof of Theorem~\ref{thm:local_elliptic_law}. 
\end{proof} 

\section{Solution and stability of Dyson equation} \label{sec:deterministic} 

In this section, we study the solution $M \in \C^{2\times 2}$ of the Dyson equation, \eqref{eq:mde_def_self_energy}, and its stability. 
This will allow us to prove Lemma~\ref{lem:log_potential_sigma_rho} (see Section~\ref{sec:proof_lemma_L_sigma} below) and be an important ingredient in the proof of Theorem~\ref{thm:local_law_H} below.

\subsection{Properties of $M$} 

We start this section with some bounds on $M$ and an asymptotic expansion for $M$ when $\eta$ is large. 

\begin{lemma}[Basic estimates on $M$] 
\begin{enumerate}[label=(\roman*)]  
\item 
Uniformly for all $\eta >0$ and $\zeta \in \C$, we have 
\begin{subequations} 
\begin{align}
\norm{M} & \leq \min\{ 1, \eta^{-1}\} \leq 2 (1 + \eta)^{-1}, \label{eq:M_bounded} \\ 
 \norm{M^{-1}} & \lesssim 1 + \eta + \abs{\zeta}. \label{eq:M_inverse_bound}  
\end{align}
\end{subequations} 
\item \label{item:bounded_zeta} Let $r>0$. Then, uniformly for all $\eta >0$ and $\zeta \in \DD_r$, 
we have 
\begin{subequations} 
\begin{align} 
M & = \ii \eta^{-1} + O_r(\eta^{-2}), \label{eq:M_eta_large} \\ 
 M^* M & \gtrsim_r (1+\eta)^{-2}, \label{eq:lower_bound_M_star_M} \\ 
\Im M & \gtrsim_r \eta(1 + \eta)^{-2} . \label{eq:im_M_gtrsim_eta} 
\end{align} 
\end{subequations} 
\end{enumerate} 
\end{lemma}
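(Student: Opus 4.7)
My plan is to exploit the explicit $2\times 2$ block structure \eqref{eq:decomposition_M} of $M$ to reduce the Dyson equation \eqref{eq:mde_def_self_energy} to two scalar equations. A direct computation gives $\smallS[M]=\begin{pmatrix}\ii v & \varrho b\\ \varrho \bar b & \ii v\end{pmatrix}$, $\det M = -u$ with $u\defeq v^2+\abs{b}^2$, and in particular $M^*M = uI$ and $\Im M = vI$. Matching the entries of $-M^{-1}$ with the right-hand side of \eqref{eq:mde_def_self_energy} yields
\begin{equation*}
v = (\eta+v)u, \qquad \bar b = -u(\zeta + \varrho b),
\end{equation*}
so the whole lemma reduces to scalar bounds on $u$ and $v$.

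For part (i), the first equation rearranges to $u=v/(\eta+v)\leq 1$, and the sandwich $u\leq v/\eta \leq \sqrt{u}/\eta$ then gives $u\leq \eta^{-2}$; together this proves $\norm{M}=\sqrt u\leq \min(1,\eta^{-1})$. For $\norm{M^{-1}}$ I would apply the triangle inequality directly to \eqref{eq:mde_def_self_energy}, using $\norm{\smallS[M]}\lesssim \norm{M}\leq 1$ and $\norm{\text{off-diag}(\zeta)+\ii\eta}\leq \eta+\abs{\zeta}$.

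For part (ii), I would eliminate $b$ by solving $\bar b = -u(\zeta+\varrho b)$ together with its conjugate as a linear system in $(b,\bar b)$, obtaining $b = u(\varrho u\zeta-\bar\zeta)/(1-\varrho^2u^2)$ and, after a short computation,
\begin{equation*}
\abs{b}^2 = u^2\Phi_\zeta(u), \qquad \Phi_\zeta(u)\defeq \frac{(\Re\zeta)^2}{(1+\varrho u)^2}+\frac{(\Im\zeta)^2}{(1-\varrho u)^2}.
\end{equation*}
Combining this with $v = \eta u/(1-u)$ (from the first scalar equation) and $v^2+\abs{b}^2=u$ collapses the system to the single master equation $\eta^2 u/(1-u)^2 + u\Phi_\zeta(u)=1$. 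For $\zeta\in\DD_r$ one has $\Phi_\zeta(u)\leq K_r\defeq r^2/(1-\abs{\varrho})^2$ uniformly in $u\in[0,1]$, and a dichotomy on the two summands of the master equation gives the lower bound on $u$: if $u\Phi_\zeta(u)\leq 1/2$, then $\eta^2u/(1-u)^2\geq 1/2$, so either $u\geq 1/2$ or $u\geq 1/(8\eta^2)$; otherwise $u\Phi_\zeta(u)\geq 1/2$, giving $u\geq 1/(2K_r)$. In all cases $u\gtrsim_r (1+\eta)^{-2}$, which is \eqref{eq:lower_bound_M_star_M}; the estimate \eqref{eq:im_M_gtrsim_eta} then follows from $v = (\eta+v)u \geq \eta u$.

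The asymptotic \eqref{eq:M_eta_large} is an iteration for large $\eta$: the a priori bound $u\leq \eta^{-2}$ combined with $\abs{b}^2 = u^2\Phi_\zeta(u)\leq K_r u^2$ yields $\abs{b}\lesssim_r \eta^{-2}$, hence $v^2 = u-\abs{b}^2 = u\pb{1+O_r(\eta^{-2})}$; inserting this into $v(1-u) = \eta u$ gives $v = \eta^{-1}+O_r(\eta^{-3})$, and therefore $M = \ii \eta^{-1} + O_r(\eta^{-2})$. The main technical step I expect is the derivation of the scalar master equation and the dichotomy argument producing $u\gtrsim_r (1+\eta)^{-2}$; the remaining claims are direct algebraic consequences of the two scalar equations.
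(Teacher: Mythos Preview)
Your proposal is correct. Part (i) and the derivation of \eqref{eq:im_M_gtrsim_eta} from \eqref{eq:lower_bound_M_star_M} are essentially the paper's argument rewritten in scalar form; for part (ii), however, you take a genuinely different route. The paper obtains \eqref{eq:lower_bound_M_star_M} in one line from part (i) via the general inequality $M^*M \geq \norm{M^{-1}}^{-2}$, and gets \eqref{eq:M_eta_large} by a Neumann expansion of \eqref{eq:mde_def_self_energy} around $(\ii\eta)^{-1}$, noting that the statement is vacuous for bounded $\eta$. Your approach instead eliminates $b$ to produce the scalar master equation $\eta^2 u/(1-u)^2 + u\Phi_\zeta(u)=1$ and runs a dichotomy on its two summands; this is more computational but yields the same bound, and the master equation you derive is essentially the relation \eqref{eq:v_equation} that the paper needs later anyway (with the substitution $u=v/(\eta+v)$). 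The trade-off is that the paper's argument is shorter and model-independent (it only uses $\norm{\smallS}\le 1$ and normality of $M$), while yours makes the dependence of $u$ on $\zeta,\eta,\varrho$ fully explicit and anticipates computations used in Lemma~\ref{lem:v_sim_1_bulk}.
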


We first record a useful identity following from \eqref{eq:mde_def_self_energy}. 
Taking the imaginary part in \eqref{eq:mde_def_self_energy} yields $\Im M = M^* (\eta + \smallS[\Im M]) M$. 
Thus, since $\Im M = v = \smallS[\Im M]$, we obtain 
\begin{equation} \label{eq:abs_M_squared} 
 \abs{M}^2 = M^* M = v^2 + \abs{b}^2  = \frac{v}{\eta + v}.
\end{equation}
In particular, $\norm{M}^2 = v^2 + \abs{b}^2 = \frac{v}{\eta + v}$.

\begin{proof} 
The identity \eqref{eq:abs_M_squared} directly yields $\norm{M} \leq 1$.  
From \eqref{eq:mde_def_self_energy}, we conclude $-\Im M^{-1} \geq \eta$ and, hence, $\norm{M} \leq \eta^{-1}$. 
Therefore, $\norm{M} \leq \min\{ 1, \eta^{-1}\} \leq 2 (1 + \eta)^{-1}$ which is \eqref{eq:M_bounded}. 
For \eqref{eq:M_inverse_bound}, we apply $\norm{\genarg}$ to \eqref{eq:mde_def_self_energy} and use that $\norm{\smallS[M]} \leq \norm{M} \leq 1$ by \eqref{eq:M_bounded}. 

For the proof of \ref{item:bounded_zeta}, let $r>0$.  
Then there is $C_r >0$ such that 
$\eta^{-1}(\abs{\zeta} + \norm{M}) \leq 1/2$ for all $\eta \geq C_r$ and $\zeta \in \DD_r$. 
If $\eta \in (0,C_r]$  then \eqref{eq:M_eta_large} holds trivially. 
If $\eta \geq C_r$ then we take the inverse of \eqref{eq:mde_def_self_energy}, expand the right-hand side of the result around $(\ii \eta)^{-1}$ and obtain  
\[ - M = (\ii \eta)^{-1} \bigg( 1 + (\ii \eta)^{-1} \bigg( \begin{pmatrix} 0 & \zeta \\ \bar \zeta &0 \end{pmatrix} + \smallS[M] \bigg) \bigg)^{-1}
= - \ii \eta^{-1} + O_r(\eta^{-2}).     \] 
Here, we used in the last step that $\norm{\smallS[M]} \leq \norm{M}$ and $\eta^{-1}(\abs{\zeta} + \norm{M})\leq 1/2$ for $\eta \geq C_r$ and $\norm{M} \leq 2(1 + \eta)^{-1}$ by \eqref{eq:M_bounded}. 
This proves \eqref{eq:M_eta_large} in the missing regime. 
For \eqref{eq:lower_bound_M_star_M}, we note that $M^*M \geq \norm{M^{-1}}^{-2}$. Hence, the upper bound in \eqref{eq:M_inverse_bound} implies \eqref{eq:lower_bound_M_star_M} as $\abs{\zeta} \leq r$. 
Since $\Im M = M^*(-\Im M^{-1})M \geq \eta M^* M$ by \eqref{eq:mde_def_self_energy}, the bound \eqref{eq:im_M_gtrsim_eta} 
follows from \eqref{eq:lower_bound_M_star_M}. 
\end{proof} 

The next lemma shows that, for $\zeta \in E_{\varrho,\delta}$, the origin is asymptotically in the bulk spectrum of $\Hf_\zeta$. 

\begin{lemma}[Scaling of $v$ on $E_{\varrho,\delta}$] \label{lem:v_sim_1_bulk} 
Let $\delta \in (0,1)$. Then, uniformly for $\zeta \in E_{\varrho,\delta}$ and $\eta \in (0,1]$, we have  
\begin{equation} \label{eq:v_sim_1} 
v(\zeta,\eta) \sim_\delta 1. 
\end{equation}
Moreover, for any $\zeta \in \Int(E_{\varrho})$, we have 
\begin{equation} \label{eq:v_inside_limit_eta_to_0} 
\lim_{\eta \downarrow 0} v(\zeta, \eta)^2 = 1 - \frac{(\Re \zeta)^2}{(1 + \varrho)^2}  - \frac{(\Im \zeta)^2}{(1 -\varrho)^2}. 
\end{equation}
\end{lemma}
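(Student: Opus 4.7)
The plan is to reduce the $2\times 2$ matrix Dyson equation \eqref{eq:mde_def_self_energy} to a single scalar equation for $v$, from which both statements of the lemma can be read off directly. Writing $M = \bigl(\begin{smallmatrix}\ii v & \bar b \\ b & \ii v\end{smallmatrix}\bigr)$ as in \eqref{eq:decomposition_M} and introducing $u \defeq v^2 + |b|^2 = v/(v+\eta)$ (where the second equality is \eqref{eq:abs_M_squared}), I will compare the off-diagonal entries of \eqref{eq:mde_def_self_energy} after inverting $M$ to obtain the linear relation $-\bar b = (\zeta + \varrho b)u$. Solving for $b$ gives $b = u(u\varrho\zeta - \bar\zeta)/(1 - u^2\varrho^2)$, and with $\zeta = x + \ii y$ a short calculation yields $|u\varrho\zeta - \bar\zeta|^2 = (1-u\varrho)^2 x^2 + (1+u\varrho)^2 y^2$, so that
\begin{equation*}
|b|^2 \;=\; \frac{u^2 x^2}{(1+u\varrho)^2} + \frac{u^2 y^2}{(1-u\varrho)^2}.
\end{equation*}
Substituting this into $v^2 = u - |b|^2$ and dividing by $u$ will produce the key scalar identity
\begin{equation*}
v^2 + v\eta \;=\; 1 - \frac{u\,x^2}{(1+u\varrho)^2} - \frac{u\,y^2}{(1-u\varrho)^2}, \qquad u = \frac{v}{v+\eta} \in (0,1].
\end{equation*}

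Both statements of the lemma then follow quickly. The upper bound $v \lesssim 1$ in \eqref{eq:v_sim_1} is already contained in \eqref{eq:M_bounded}. For the lower bound, I will note that for every $\varrho \in (-1,1)$ the two maps $u \mapsto u/(1 \pm u\varrho)^2$ are increasing on $[0, 1]$: their derivatives equal $(1 \mp u\varrho)/(1 \pm u\varrho)^3$, both positive on $[0,1]$ since $|u\varrho| < 1$. Because $u \leq 1$, the right-hand side of the scalar identity is thus bounded below by $1 - x^2/(1+\varrho)^2 - y^2/(1-\varrho)^2$, which is $\geq \delta$ whenever $\zeta = x + \ii y \in E_{\varrho,\delta}$. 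Combining this with $v \leq 2$ (from \eqref{eq:M_bounded}) and $\eta \leq 1$ yields $\delta \leq v(v+\eta) \leq 3v$, hence $v \geq \delta/3$, completing \eqref{eq:v_sim_1}.

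For \eqref{eq:v_inside_limit_eta_to_0}, I fix $\zeta \in \Int(E_\varrho)$ and choose $\delta > 0$ with $\zeta \in E_{\varrho,\delta}$. By the bound just proven, $v(\zeta, \eta)$ stays in the compact set $[\delta/3, 2]$ as $\eta \downarrow 0$, so $u = v/(v+\eta) \to 1$. Extracting any subsequential limit $v_\ast$ and passing to the limit in the scalar identity yields $v_\ast^2 = 1 - x^2/(1+\varrho)^2 - y^2/(1-\varrho)^2$; since this equation has a unique positive solution, the full limit of $v(\zeta, \eta)$ exists and equals the right-hand side of \eqref{eq:v_inside_limit_eta_to_0}. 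The only delicate point in the whole argument is the monotonicity check for $u \mapsto u/(1 \pm u\varrho)^2$, which must hold uniformly in the sign of $\varrho$; the one-line derivative computation above handles both signs simultaneously.
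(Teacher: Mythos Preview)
Your proof is correct and follows essentially the same route as the paper: both reduce the $2\times 2$ Dyson equation to a single scalar relation for $v$ (your identity $v^2+v\eta = 1 - ux^2/(1+u\varrho)^2 - uy^2/(1-u\varrho)^2$ with $u=v/(v+\eta)$ is exactly the paper's \eqref{eq:v_equation} after the substitution $1+\eta/v = 1/u$), and both obtain \eqref{eq:v_inside_limit_eta_to_0} by sending $\eta\downarrow 0$ in that relation using $v\sim_\delta 1$.

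The one genuine difference is how the lower bound in \eqref{eq:v_sim_1} is extracted. The paper argues by contrapositive: it assumes $v$ is small, uses \eqref{eq:lower_bound_M_star_M} to force $|b|\sim 1$, rewrites the off-diagonal relation as an equation in $\tau=|b|$, and then invokes $\tau^{-1}\pm\varrho\tau \ge 1\pm\varrho$ on $(0,1]$ to conclude $\zeta\notin E_{\varrho,\delta}$. You instead observe directly that $u\mapsto u/(1\pm u\varrho)^2$ is increasing on $[0,1]$, so the right-hand side of your scalar identity is minimised at $u=1$, giving $v(v+\eta)\ge \delta$ immediately. Your argument is shorter and avoids the detour through $|b|$ and the auxiliary variable $\tau$; the paper's contrapositive is slightly more roundabout but uses only the same ingredients. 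Both monotonicity facts are equivalent reformulations of one another.
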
 

We need further relations following from \eqref{eq:mde_def_self_energy}. 
Left-multiplying \eqref{eq:mde_def_self_energy} by $M$ and computing the right-upper entry of the result 
using \eqref{eq:decomposition_M} yield $ - \bar b (\eta + v) = v (\zeta + \varrho b)$. 
Hence, by applying \eqref{eq:abs_M_squared}, we obtain 
\begin{equation} \label{eq:b_equation} 
 - \bar b = (v^2 + \abs{b}^2) (\zeta + \varrho b). 
\end{equation}
Moreover, we use \eqref{eq:b_equation} and \eqref{eq:abs_M_squared} to conclude $- (1 + \eta/v) \bar b = \zeta + \varrho b$. 
By taking the real and imaginary part of this relation separately, we obtain 
\begin{equation} \label{eq:b_as_function_of_v} 
b = - \frac{\Re \zeta}{1 + \varrho + \eta/v} + \ii \frac{\Im \zeta}{1 - \varrho + \eta/v} = - \frac{1}{2} \frac{\zeta + \bar \zeta}{ 1 + \varrho + \eta/v} + \frac{1}{2} \frac{\zeta - \bar \zeta}{1 - \varrho + \eta/v}. 
\end{equation}
Employing \eqref{eq:abs_M_squared} again, we arrive at 
\begin{equation}\label{eq:v_equation} 
\frac{(\Re \zeta)^2}{( 1 + \eta/v + \varrho)^2} + \frac{(\Im \zeta)^2}{(1 + \eta/v - \varrho)^2} = \abs{b}^2 = \frac{1}{1 + \eta/v} - v^2. 
\end{equation}

\begin{proof} 
We now show that there is $c\sim_\delta 1$ depending only on $\delta$ such that if $v(\zeta, \eta) \leq c$ for 
some $z \in \DD_{10}$ and $\eta \in (0,1]$ then $\zeta \notin E_{\varrho,\delta}$. The previous statement implies \eqref{eq:v_sim_1}. 

Suppose that $v(z,\eta) \leq c$. 
First, we choose $c \sim 1$ sufficiently small such that $\abs{b} \gtrsim 1$. This is possible since $v^2 + \abs{b}^2 \gtrsim 1$ by \eqref{eq:lower_bound_M_star_M} for $z \in \DD_{10}$ and $\eta \in (0,1]$. 
Note that $\abs{b} \leq 1$ by \eqref{eq:M_bounded} and \eqref{eq:abs_M_squared}. 

Taking the real and imaginary part of \eqref{eq:b_equation}, solving them for $\Re b$ and $\Im b$, respectively, and combining the results to a relation for $\tau = \abs{b}$ yield  
\begin{equation} \label{eq:proof_v_sim_1_aux1} 
\frac{(\Re \zeta)^2}{(\tau^{-1} + \varrho \tau)^2} + \frac{(\Im \zeta)^2}{(\tau^{-1} - \varrho \tau)^2} = 1 + O(v^2), 
\end{equation}
where we also used $\tau = \abs{b} \sim 1$. 

Since $\tau^{-1} + \varrho \tau \geq 1 + \varrho$ and $\tau^{-1} - \varrho\tau \geq 1 - \varrho$ for $\tau \in (0,1]$, we conclude from \eqref{eq:proof_v_sim_1_aux1} that 
\[ \frac{(\Re \zeta)^2}{(1 + \varrho)^2} + \frac{(\Im \zeta)^2}{(1 - \varrho)^2} \geq 1 + O(v^2). \] 
Therefore, $\zeta \notin E_{\varrho,\delta}$ if $c \sim_\delta 1$ is chosen sufficiently small,
which completes the proof of \eqref{eq:v_sim_1}.  

For the proof of \eqref{eq:v_inside_limit_eta_to_0}, we fix $\zeta \in \Int(E_\varrho)$. 
Hence, $\zeta \in E_{\varrho,\delta}$ for some $\delta \in (0,1)$. 
We obtain \eqref{eq:v_inside_limit_eta_to_0} by  
sending $\eta \downarrow 0$ in \eqref{eq:v_equation} and using $v(\zeta,\eta)\sim_\delta 1$ by \eqref{eq:v_sim_1}. 
This completes the proof of Lemma~\ref{lem:v_sim_1_bulk}. 
\end{proof}

\subsection{Stability} 

For the following analysis of the stability operator of the Dyson equation, \eqref{eq:mde_def_self_energy}, we introduce the matrix 
\[ E_- \defeq \begin{pmatrix} 1 & 0 \\ 0 & -1 \end{pmatrix} \in \C^{2\times 2}. \] 
We will work on the linear subspace $E_-^\perp \subset \C^{2\times 2}$, where the orthogonality is understood with respect to the Hilbert-Schmidt scalar product on $\C^{2\times 2}$.  
The next proposition proves a precise bound on the inverse of the stability operator $\smallL \colon \C^{2\times 2} \to \C^{2\times 2}$ of the Dyson equation, \eqref{eq:mde_def_self_energy}. The operator $\smallL$ is defined through 
\begin{equation} \label{eq:def_smallL} 
\smallL R \defeq R - M (\smallS R) M 
\end{equation} 
for any $R \in \C^{2\times 2}$. 
This proposition will be a 
crucial ingredient in the proof of Proposition~\ref{pro:local_law_H_average}
and its generalisation, Theorem~\ref{thm:local_law_H} below.

\begin{proposition}[Linear stability estimate] \label{pro:linear_stability} 
For any $\zeta \in \C$ and $\eta >0$, the operator $\smallL$ leaves $E_-^\perp$ invariant and is invertible on $E_-^\perp$. 
Moreover, the inverse of the restriction to $E_-^\perp$ satisfies 
\begin{equation} \label{eq:linear_stability} 
\normb{\smallL^{-1}|_{E_-^\perp}} \lesssim 
\bigg( v^2 + \frac{\eta}{v} \bigg)^{-1} 
\end{equation}
uniformly for all $\zeta \in \DD_{10}$ and $\eta \in (0,1]$.
\end{proposition}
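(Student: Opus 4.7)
The plan is to diagonalize $\smallL$ with respect to the splitting $\C^{2\times 2} = \mathrm{span}(E_-) \oplus E_-^\perp$, reduce the inversion problem on $E_-^\perp$ to a three-by-three determinant computation, and bound that determinant from below.

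For the invariance, a direct matrix computation using \eqref{eq:decomposition_M} and \eqref{eq:abs_M_squared} shows that for every $X \in \C^{2\times 2}$,
\[
(MXM)_{11} - (MXM)_{22} = -s\,(X_{11} - X_{22}), \qquad s \defeq v^2 + \abs{b}^2 = \frac{v}{v+\eta}.
\]
Combined with $(\smallS X)_{11} - (\smallS X)_{22} = -(X_{11} - X_{22})$, which is immediate from the definition of $\smallS$, this gives $(\smallL X)_{11} - (\smallL X)_{22} = (1-s)(X_{11} - X_{22})$. Hence $\smallL$ preserves $E_-^\perp = \{X \colon X_{11} = X_{22}\}$, and the analogous computation $ME_- M = -sE_-$ shows $\smallL E_- = (1-s) E_-$.

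To invert $\smallL|_{E_-^\perp}$ I use the orthonormal basis $(e_I, e_{12}, e_{21})$ of $E_-^\perp$, where $e_I \defeq I/\sqrt{2}$ and $e_{12}, e_{21}$ are the standard matrix units. Computing the images $\smallL e_I$, $\smallL e_{12}$, $\smallL e_{21}$ by direct multiplication, writing $b = \abs{b}\, e^{\mathrm{i}\theta}$, and using $\abs{b}^2 = s - v^2$, the resulting $3\times 3$ determinant simplifies algebraically to
\[
\det \smallL|_{E_-^\perp} = (1-s)\bigl(1 + \varrho^2 s^2 - 2\varrho(s-v^2)\cos 2\theta\bigr) + 2v^2(1 - \varrho^2 s).
\]
The first bracket is bounded below by $(1 - \abs{\varrho} s)^2 \geq (1 - \abs{\varrho})^2$ after completing the square and using $s - v^2 \leq s \leq 1$; the coefficient $1 - \varrho^2 s \geq 1 - \varrho^2$ since $s \leq 1$. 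Both summands being non-negative, this yields
\[
\det \smallL|_{E_-^\perp} \gtrsim (1-s) + v^2 = \frac{\eta}{v+\eta} + v^2.
\]
The a priori estimate \eqref{eq:im_M_gtrsim_eta} with $r = 10$ forces $v \gtrsim \eta$ on $\DD_{10} \times (0,1]$, hence $\eta/v \lesssim 1$ and $\frac{\eta}{v+\eta} \gtrsim \eta/v$, so $\det \smallL|_{E_-^\perp} \gtrsim v^2 + \eta/v$.

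Since $v, \abs{b} \leq 1$, every entry of $\smallL|_{E_-^\perp}$ is $O(1)$, hence each $2\times 2$ cofactor appearing in the adjugate is $O(1)$, and Cramer's rule delivers $\norm{\smallL^{-1}|_{E_-^\perp}} \lesssim (v^2 + \eta/v)^{-1}$. The main obstacle is locating the algebraic identity that exhibits the determinant as a sum of two manifestly non-negative terms; without such a decomposition, the two contributions could in principle cancel, and there is no way to read off the lower bound directly from the matrix of $\smallL|_{E_-^\perp}$. The a priori bound $v \gtrsim \eta$ then promotes the natural rate $(1-s)+v^2$ into the sharper rate $v^2 + \eta/v$ claimed in the proposition.
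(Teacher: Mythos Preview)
Your proof is correct and takes a genuinely different route from the paper's argument. The paper factors $\smallL = \smallU^*(\smallU - \smallT)$ via the polar decomposition $M = \abs{M}U$, with $\smallU R = U^* R U^*$ unitary and $\smallT = \norm{M}^2 \smallS$ self-adjoint on $E_-^\perp$, and then invokes an abstract Rotation--Inversion Lemma (imported from \cite{AjankiCPAM}) which bounds $\norm{(\smallU - \smallT)^{-1}}$ in terms of the spectral gap of $\smallT$ and the quantity $\abs{1 - \norm{\smallT}\scalar{h}{\smallU h}}$, where $h$ is the top eigenvector of $\smallT$. The spectrum of $\smallS|_{E_-^\perp}$ is $\{1,\varrho,-\varrho\}$, the gap is $\norm{M}^2(1-\abs{\varrho})$, and a short computation gives $1 - \norm{M}^2\scalar{h}{\smallU h} = 2v^2 + \eta/(\eta+v)$, yielding the claimed bound.

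Your approach bypasses the external lemma entirely: you write down the $3\times 3$ matrix of $\smallL|_{E_-^\perp}$ explicitly, compute its determinant, and locate the algebraic identity that splits it into two non-negative summands. This is more elementary and fully self-contained, at the cost of being tailored to the $2\times 2$ block structure; the paper's polar-decomposition route is more conceptual and would scale to settings where a brute-force determinant expansion is infeasible. Both methods ultimately rely on the same structural facts, namely $s = \norm{M}^2 \leq 1$, the spectral gap $1 - \abs{\varrho}$, and the identity $1 - s + 2v^2 = 2v^2 + \eta/(\eta+v)$, but you extract them by hand rather than through an abstract operator-theoretic statement.
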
 

In \eqref{eq:linear_stability} and the following, we write $\smallL^{-1}|_{E_-^\perp}$ 
for the inverse of the restriction of $\smallL$ to $E_-^\perp$, i.e.\ the operator is first 
restricted to $E_-^\perp$ and then inverted. 

\begin{corollary}[Linear stability estimate in bulk] \label{cor:stability_bulk} 
Let $\delta \in (0,1)$. Then, uniformly for $\zeta \in E_{\varrho,\delta}$ and $\eta \in (0,1]$, we have  
\[ \normb{\smallL^{-1}|_{E_-^\perp}} \lesssim_\delta 1. \] 
\end{corollary}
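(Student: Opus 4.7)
The corollary follows almost immediately by combining the two ingredients already assembled in Section~\ref{sec:deterministic}, namely the general stability bound in Proposition~\ref{pro:linear_stability} and the lower bound on $v$ in the bulk from Lemma~\ref{lem:v_sim_1_bulk}. The plan is to apply Proposition~\ref{pro:linear_stability} to obtain
\[
\bignorm{\smallL^{-1}|_{E_-^\perp}} \lesssim \pbb{v(\zeta,\eta)^2 + \frac{\eta}{v(\zeta,\eta)}}^{-1},
\]
valid for $\zeta \in \DD_{10}$ and $\eta \in (0,1]$, and then to discard the $\eta/v$ term and use the bulk lower bound $v(\zeta,\eta) \gtrsim_\delta 1$ to conclude $v^2 + \eta/v \gtrsim_\delta 1$, which gives the claim.

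More precisely, for $\zeta \in E_{\varrho,\delta}$ one has $E_{\varrho,\delta} \subset \DD_{10}$ (so Proposition~\ref{pro:linear_stability} applies) and Lemma~\ref{lem:v_sim_1_bulk} gives $v(\zeta,\eta) \sim_\delta 1$ uniformly in $\eta \in (0,1]$. Since $v^2 + \eta/v \geq v^2$, the denominator is bounded below by a $\delta$-dependent positive constant, and the stated bound $\norm{\smallL^{-1}|_{E_-^\perp}} \lesssim_\delta 1$ follows.

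There is essentially no obstacle: Proposition~\ref{pro:linear_stability} has already done the linear-algebra work of inverting $\smallL$ on the correct codimension-one subspace, while Lemma~\ref{lem:v_sim_1_bulk} has already done the analytic work of ruling out the degeneration $v \downarrow 0$ in the bulk. Corollary~\ref{cor:stability_bulk} is just the packaging of these two facts, and the one-line proof consists of chaining the two estimates together.
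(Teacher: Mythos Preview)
Your proof is correct and matches the paper's own argument: the paper simply says the bound follows directly from \eqref{eq:linear_stability} and \eqref{eq:v_sim_1}, which is exactly the chaining of Proposition~\ref{pro:linear_stability} with Lemma~\ref{lem:v_sim_1_bulk} that you describe.
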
 

\begin{proof}[Proof of Corollary~\ref{cor:stability_bulk}] 
The claimed bound follows directly from \eqref{eq:linear_stability} and \eqref{eq:v_sim_1}. 
\end{proof}

\begin{proof}[Proof of Proposition~\ref{pro:linear_stability}] 
The main tool of this proof is the following lemma, which is proved in \cite[Lemma~5.8]{AjankiCPAM} and provides a bound on the inverse of 
operators of the type $\smallU - \smallT$, where $\smallU$ is unitary and $\smallT$ is self-adjoint.  
The lemma uses the notion of the spectral gap of a self-adjoint operator $\smallT$ on $\C^k$. The \emph{spectral gap} $\Gap(\smallT)$ of $\smallT$ is the difference of 
the two largest eigenvalues of $\abs{\smallT}$ (cf.\ \cite[Definition~5.4]{AjankiCPAM}). 
In the next lemma and for the rest of this proof, $\norm{\genarg}_2$ is the operator norm on $\C^{k\times k}$ induced by the Euclidean norm on $\C^k$.

\begin{lemma}[Rotation-Inversion] \label{lem:rotation_inversion} 
Let $\smallT$ be a self-adjoint and $\smallU$ a unitary operator on $\C^k$. 
Suppose that $\Gap(\smallT) >0$ and $\norm{\smallT}_2 \leq 1$. Then there is universal constant $C>0$ such that 
\[ \norm{(\smallU-\smallT)^{-1}}_2 \leq C \big( \Gap(\smallT) \abs{1-\norm{\smallT}_2 \scalar{h}{\smallU h}} \big)^{-1}, \] 
where $h$ is the normalised eigenvector of $\smallT$ corresponding to the nondegenerate eigenvalue $\norm{\smallT}_2$. 
\end{lemma}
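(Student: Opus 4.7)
My plan is to treat $\smallU - \smallT$ as a rank-one perturbation of a Neumann-invertible operator and apply Sherman--Morrison. Without loss of generality I may assume $\smallT h = \tau h$ with $\tau := \|\smallT\|_2 \geq 0$ (otherwise replace $(\smallU,\smallT)$ by $(-\smallU,-\smallT)$, which preserves all hypotheses and the target inequality). I then decompose $\smallT = \tau h h^* + \smallT_1$ with $\smallT_1 := (I - h h^*)\smallT(I - h h^*)$; by the simplicity of the top eigenvalue of $|\smallT|$, one has $\smallT_1 h = 0$ and $\|\smallT_1\|_2 = \tau - \Gap(\smallT)$.

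Setting $M := \smallU - \smallT_1$, write $\smallU - \smallT = M - \tau h h^*$. Because $\smallU^* M = I - \smallU^* \smallT_1$ with $\|\smallU^*\smallT_1\|_2 = \|\smallT_1\|_2 \leq 1 - \Gap(\smallT)$, a Neumann series yields $\|M^{-1}\|_2 \leq 1/\Gap(\smallT)$. The Sherman--Morrison identity for this rank-one perturbation then gives, whenever $\delta := 1 - \tau\langle h, M^{-1} h\rangle \neq 0$,
\[
(\smallU - \smallT)^{-1} = M^{-1} + \frac{\tau\,(M^{-1} h)(M^{-*} h)^*}{\delta}, \qquad \|(\smallU - \smallT)^{-1}\|_2 \leq \|M^{-1}\|_2 + \frac{\tau\,\|M^{-1} h\|\,\|M^{-*} h\|}{|\delta|}.
\]
Thus the proof reduces to bounding $\|M^{-1}h\|$, $\|M^{-*}h\|$ from above and $|\delta|$ from below, each in terms of $\Gap(\smallT)$ and $|1 - \tau u_{11}|$, with $u_{11} := \langle h, \smallU h\rangle$.

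The identity $\smallT_1 h = 0$ is the central algebraic input. It gives $M h = \smallU h$ and, after rearrangement, $M^{-1} h = \smallU^* h + M^{-1} \smallU^* \smallT_1 w$, where $w := (I - h h^*)\smallU^* h$ satisfies $\|w\|^2 = 1 - |u_{11}|^2$ by unitarity; a parallel identity holds for $M^{-*} h$. Taking the inner product with $h$ yields the expansion $\langle h, M^{-1} h\rangle = \bar u_{11} + \mathrm{err}$ with $|\mathrm{err}| \lesssim \|w\|^2/\Gap(\smallT)$, which provides perturbative control of $\delta$ around the leading value $1 - \tau \bar u_{11}$.

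The main obstacle is converting this expansion into a lower bound on $|\delta|$ tight enough to match $|1 - \tau u_{11}|$: the perturbative error $\|w\|^2/\Gap(\smallT)$ can in principle dominate $|1 - \tau u_{11}|$, so the triangle inequality alone is insufficient. I would resolve this by a dichotomy in the size of $\|w\|^2$ relative to $\Gap(\smallT) \cdot |1 - \tau u_{11}|$. In the regime $\|w\|^2 \lesssim \Gap(\smallT) |1 - \tau u_{11}|$, the perturbation is negligible and $|\delta| \asymp |1 - \tau u_{11}|$, so the Sherman--Morrison estimate combined with the naive bound $\|M^{-1}h\|, \|M^{-*}h\| \leq 1 + \|w\|/\Gap(\smallT)$ closes the argument. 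In the complementary regime, sharper bounds on $\|M^{-1}h\|$ --- exploiting the exact relation $M h = \smallU h$ more carefully than the triangle inequality affords, to see that $\|M^{-1}h\|$ blows up only in the same direction where $\delta$ does --- combine with elementary trigonometric constraints relating $\tau$, $u_{11}$, $\|w\|$, and $\Gap(\smallT)$ via unitarity to yield the claimed bound $\|(\smallU-\smallT)^{-1}\|_2 \leq C/(\Gap(\smallT)\,|1 - \tau u_{11}|)$.
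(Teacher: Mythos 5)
The paper does not prove this lemma: it imports it verbatim from \cite[Lemma~5.8]{AjankiCPAM}, so there is no proof of your own to compare against and you are attempting something strictly beyond what the paper does. Your Sherman--Morrison framework is sound as far as it goes: the decomposition $\smallT=\tau hh^*+\smallT_1$, the Neumann bound $\|M^{-1}\|_2\le 1/\Gap(\smallT)$ for $M=\smallU-\smallT_1$, the identity $Mh=\smallU h$, and the expansion $\langle h,M^{-1}h\rangle=\bar u_{11}+O(\|w\|^2/\Gap(\smallT))$ are all correct (modulo a harmless typo writing $M^{-1}\smallU^*\smallT_1 w$ where it should be $M^{-1}\smallT_1 w$). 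The unitarity constraint you gesture at is also real and worth making explicit: since $\|w\|^2=1-|u_{11}|^2$ and $\tau\le1$, one has $|1-\tau u_{11}|\ge 1-|u_{11}|\ge\|w\|^2/2$.

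The genuine gap is exactly where you say the obstacle is, and your sketch does not overcome it. In the regime $\|w\|^2\gtrsim\Gap(\smallT)\,|1-\tau u_{11}|$, the perturbative error $O(\|w\|^2/\Gap(\smallT))$ in $\delta=1-\tau\langle h,M^{-1}h\rangle$ is at least of the same order as the putative leading term $1-\tau\bar u_{11}$, so no triangle-inequality argument from that expansion can produce a lower bound $|\delta|\gtrsim\Gap(\smallT)\,|1-\tau u_{11}|$; indeed, combined with the bounds $\|M^{-1}h\|,\|M^{-*}h\|\le 1+\|w\|/\Gap(\smallT)$ you would need the even stronger $|\delta|\gtrsim(\Gap(\smallT)+\|w\|^2/\Gap(\smallT))\,|1-\tau u_{11}|$, and the trigonometric inequality $|1-\tau u_{11}|\ge\|w\|^2/2$ does not supply it. What is needed there is a nonperturbative lower bound on $\delta$ (equivalently, on $\det(\smallU-\smallT)/\det(M)$) that tracks phases rather than only magnitudes --- for instance one can use the exact identity $\langle \smallU^*h,M^{-1}h\rangle=1$ (obtained by pairing $\smallU M^{-1}h=h+\smallT_1 M^{-1}h$ with $h$) to rewrite $\delta$, but even this requires further nontrivial work. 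Your phrases ``sharper bounds \ldots to see that $\|M^{-1}h\|$ blows up only in the same direction where $\delta$ does'' (note $\|M^{-1}h\|\le 1/\Gap(\smallT)$ is always bounded, so it does not ``blow up'') and ``elementary trigonometric constraints \ldots yield the claimed bound'' name the place where the proof must happen but do not constitute it.
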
 

We now explain how $\smallL$ can be represented as $\smallU - \smallT$ for some $\smallU$ and $\smallT$ introduced next. 
From \eqref{eq:decomposition_M}, we conclude that $M$ is normal. Let $M = \abs{M} U = U \abs{M}$ 
be its polar decomposition. 
Owing to \eqref{eq:abs_M_squared}, we have 
\[ \abs{M}^2 = M^* M = \norm{M}^2 , \qquad \norm{M}^2 = v^2 + \abs{b}^2 = \frac{v}{\eta + v}, \qquad U = (v^2 + \abs{b}^2)^{-1/2} M. \] 
Consequently, 
$\smallL = \smallU^*(\smallU- \smallT)$ with the definitions $\smallT \defeq \norm{M}^2 \smallS$ and 
$\smallU R \defeq U^* R U^*$ for any $R \in \C^{2\times 2}$. 
Note that $\smallS$, $\smallU$ and $\smallU^*$, and, thus, $\smallL$ leave $E_-^\perp$ invariant.
Moreover, $\smallS$ is self-adjoint and $\smallU$ is unitary on $E_-^\perp$. 
In particular, $\norm{M}^2 \smallS$ is self-adjoint on $E_-^\perp$. 

We now apply Lemma~\ref{lem:rotation_inversion} by identifying $E_-^\perp$ with $\C^3$ 
and the choices of $\smallT$ and $\smallU$ made above. 
It is easily seen that 
$\spec (\smallS|_{E_-^\perp}) = \{ 1, \varrho, -\varrho \}$ with the identity matrix, the first and second Pauli matrix being the respective eigenvectors. 
Hence, $\norm{\smallT}_2 = \norm{M}^2 \norm{\smallS}_2 = \norm{M}^2 = \frac{v}{\eta + v} \leq 1$ 
and $\Gap(\smallT) = \norm{M}^2 (1 - \abs{\varrho})$.
Moreover, the eigenvector $h$ from Lemma~\ref{lem:rotation_inversion} is the identity matrix. 
Thus, $1 - \norm{M}^2\scalar{h}{\smallU h} = \avg{M^2}=  1 -v^2 + \abs{b}^2 = 2 v^2 + \frac{\eta}{\eta + v}$ by 
\eqref{eq:decomposition_M} and \eqref{eq:abs_M_squared}. 
Therefore, Lemma~\ref{lem:rotation_inversion} implies 
\begin{equation} \label{eq:stability_operator_inverse_bound_hilbert_schmidt} 
\normb{\smallL^{-1} |_{E_-^\perp}}_2 \leq C \bigg((1-\abs{\varrho})\norm{M}^2 \bigg( 2v^2 + \frac{\eta}{\eta + v}\bigg)\bigg)^{-1},
\end{equation}
where we identified $E_-^\perp$ with $\C^3$ and $\norm{\genarg}_2$ denotes the operator norm induced 
by the Hilbert-Schmidt scalar product on $E_-^\perp \subset \C^{2\times 2}$. 
Since $\norm{M} \gtrsim 1$ and $v = \Im M \gtrsim \eta$ 
for $\eta \in (0,1]$ and $\zeta \in \DD_{10}$ by \eqref{eq:lower_bound_M_star_M} and \eqref{eq:im_M_gtrsim_eta}, respectively, and the norms on the space of operators on $\C^{2\times 2}$ are equivalent (with constants $\sim 1$), we conclude \eqref{eq:linear_stability} from \eqref{eq:stability_operator_inverse_bound_hilbert_schmidt}. 
\end{proof}

\begin{corollary}[Bounded derivatives] \label{cor:bound_derivatives} 
The function $M(\zeta, \eta)$ is continuously differentiable with respect to $\zeta$, $\bar \zeta$ and $\eta$ at any $\eta >0$ 
and $\zeta \in \C$. 
Moreover, for any $\delta \in (0,1)$, we have
\[ \norm{\pt M(\zeta,\eta)} + \norm{\bar \pt M(\zeta, \eta)} + \norm{\pt_\eta M(\zeta,\eta)} \lesssim_\delta 1 \] 
uniformly for $\eta \in (0,1]$ and $ \zeta \in E_{\varrho, \delta}$. 
\end{corollary}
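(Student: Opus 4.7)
The plan is to differentiate the Dyson equation \eqref{eq:mde_def_self_energy} implicitly and invert the stability operator $\smallL$ via Corollary~\ref{cor:stability_bulk}. Writing $Z(\zeta,\eta)$ for the matrix $\bigl(\begin{smallmatrix}\ii\eta & \zeta \\ \bar\zeta & \ii\eta\end{smallmatrix}\bigr)$ appearing on the right-hand side of \eqref{eq:mde_def_self_energy}, I would formally differentiate $-M^{-1} = Z + \smallS[M]$ with respect to any of the parameters $\zeta$, $\bar\zeta$, $\eta$ (denoted generically by $\pt$) and multiply by $M$ from both sides to reduce matters to the linear equation
\[ \smallL[\pt M] = M(\pt Z) M, \]
with $\smallL$ as in \eqref{eq:def_smallL}. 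The three forcings are $\pt_\zeta Z = \bigl(\begin{smallmatrix}0 & 1 \\ 0 & 0\end{smallmatrix}\bigr)$, $\pt_{\bar\zeta} Z = \bigl(\begin{smallmatrix}0 & 0 \\ 1 & 0\end{smallmatrix}\bigr)$ and $\pt_\eta Z = \ii I$.

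The main obstacle is that Proposition~\ref{pro:linear_stability} only supplies the inverse of $\smallL$ on the codimension-one subspace $E_-^\perp$, so I need to check that both $M$ itself and each of the three forcings $M(\pt Z)M$ lie in $E_-^\perp$. For $M$ this is immediate from \eqref{eq:decomposition_M}, since both diagonal entries of $M$ equal $\ii v$, so that $\tr(E_- M) = 0$. For the forcings, a direct $2\times 2$ multiplication using \eqref{eq:decomposition_M} yields
\[
M(\pt_\zeta Z) M = \begin{pmatrix} \ii v b & -v^2 \\ b^2 & \ii v b \end{pmatrix},\qquad
M(\pt_{\bar\zeta} Z) M = \begin{pmatrix} \ii v \bar b & \bar b^{\,2} \\ -v^2 & \ii v \bar b \end{pmatrix},\qquad
\ii M^2 = \begin{pmatrix} \ii(|b|^2-v^2) & -2 v \bar b \\ -2 v b & \ii(|b|^2-v^2) \end{pmatrix},
\]
each of which has equal diagonal entries and hence lies in $E_-^\perp$. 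This allows one to define $\pt M \defeq \smallL^{-1}|_{E_-^\perp}\bigl[M(\pt Z) M\bigr]$.

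Combining the bound $\norm{M} \leq 1$ from \eqref{eq:M_bounded} with Corollary~\ref{cor:stability_bulk} then yields $\norm{\pt M} \lesssim_\delta \norm{M(\pt Z) M} \lesssim 1$ uniformly for $\zeta \in E_{\varrho,\delta}$ and $\eta \in (0,1]$, which is the claimed estimate. Continuous differentiability of $M$ in the parameters is obtained by applying the implicit function theorem to the Dyson equation viewed as a smooth equation $F(M;\zeta,\bar\zeta,\eta) = 0$ on the three-dimensional real-linear subspace $E_-^\perp \subset \C^{2\times 2}$; one checks that $-M^{-1}$, $Z$ and $\smallS[M]$ each preserve $E_-^\perp$, and Proposition~\ref{pro:linear_stability} supplies the invertibility of the $M$-linearization $\smallL|_{E_-^\perp}$ required by the IFT. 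Continuity of $\pt M$ in $(\zeta,\eta)$ is then inherited from continuity of $M$ and of $\smallL^{-1}|_{E_-^\perp}$.
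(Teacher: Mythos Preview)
Your proof is correct and follows essentially the same approach as the paper: apply the implicit function theorem to the Dyson equation restricted to $E_-^\perp$ for differentiability, then differentiate \eqref{eq:mde_def_self_energy} and invert $\smallL$ on $E_-^\perp$ via Corollary~\ref{cor:stability_bulk} for the bound. Your explicit verification that the three forcings $M(\partial Z)M$ lie in $E_-^\perp$ is exactly the detail the paper suppresses in the phrase ``\eqref{eq:mde_def_self_energy} is an equation on $E_-^\perp$''. One small terminological slip: $E_-^\perp$ is three-\emph{complex}-dimensional (equivalently six-real-dimensional), not three-real-dimensional; this does not affect the argument.
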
 

\begin{proof} 
Since \eqref{eq:mde_def_self_energy} is an equation on $E_-^\perp$ and the linear stability operator $\smallL$ is invertible on $E_-^\perp$ for any $\eta >0$ and $\zeta \in \C$, 
the implicit function theorem implies the continuous differentiability with respect to $\zeta$, $\bar \zeta$ and $\eta$. 
The bound on the derivatives follows from differentiating \eqref{eq:mde_def_self_energy} and 
using Corollary~\ref{cor:stability_bulk}. 
\end{proof}

\subsection{Proof of Lemma~\ref{lem:log_potential_sigma_rho}} \label{sec:proof_lemma_L_sigma}

\begin{proof}[Proof of Lemma~\ref{lem:log_potential_sigma_rho}]
Note that the integral in the definition of $L$ exists in the Lebesgue sense since, for every $r>0$, we have 
\begin{equation} \label{eq:v_minus_1_plus_eta_inverse} 
 \absb{ v(\zeta, \eta) - (1 + \eta)^{-1}} \lesssim_r (1 + \eta)^{-2} 
\end{equation}
uniformly for all $\eta >0$ and $\zeta \in \DD_r$ by \eqref{eq:M_bounded} and \eqref{eq:M_eta_large}. 

The first inequality in \eqref{eq:integral_v_bounds} follows directly from \eqref{eq:M_bounded}. 
The second and the third bound in \eqref{eq:integral_v_bounds} are immediate consequences of \eqref{eq:v_minus_1_plus_eta_inverse} with $r = 10$. 

The main ingredients of the proof of \eqref{eq:dist_derivative_L} are the identity \eqref{eq:identity_integral_L_eps_b} and the limit \eqref{eq:derivative_b_lim_eta_to_0} below which we will show in the following. 
For each $\eps >0$, we have $L_\eps \in C^1(\C)$ and 
\begin{equation} \label{eq:identity_integral_L_eps_b}  
 2 \pt L_\eps(\zeta) = - b(\zeta, \eps), \qquad \quad L_\eps(\zeta) \defeq - \int_\eps^\infty \bigg( v(\zeta,\eta) - \frac{1}{1 + \eta} \bigg) \dd \eta. 
\end{equation}
Moreover, $b(\zeta,\eta)$ is differentiable with respect to $\bar \zeta$ for $\eta>0$ by Corollary~\ref{cor:bound_derivatives}. For any $\delta \in (0,1)$, we have 
\begin{equation} \label{eq:derivative_b_lim_eta_to_0} 
-\frac{1}{\pi} \lim_{\eta \downarrow 0} \bar \pt b(\zeta, \eta) =  \sigma_\varrho(\zeta) 
\end{equation}
uniformly on $E_{\varrho,\delta}$. 

We now deduce \eqref{eq:dist_derivative_L} from \eqref{eq:identity_integral_L_eps_b} and \eqref{eq:derivative_b_lim_eta_to_0}. 
Let $\psi \in C_0^2(\C)$ such that $\supp \psi \subset E_{\varrho,\delta}$ for some $\delta \in (0,1)$. 
First, we multiply the first relation in \eqref{eq:identity_integral_L_eps_b} by $ \frac{1}{\pi} \bar \pt \psi$ and integrate the result over $E_{\varrho,\delta}$. 
Thus, integrating by parts on both sides and using $\Delta = 4 \pt \bar \pt$ yields 
\begin{equation} \label{eq:integral_identity_eps} 
\frac{1}{2\pi} \int_{E_{\varrho,\delta}} \Delta \psi(\zeta) L_\eps(\zeta) \dd^2 \zeta = - \frac{1}{\pi} 
\int_{E_{\varrho,\delta}} \psi(\zeta) \bar \pt b(\zeta,\eps) \dd^2 \zeta. 
\end{equation}
From \eqref{eq:v_minus_1_plus_eta_inverse}, we conclude that $L$ is uniformly bounded in $\zeta$ and 
that $L_\eps \to L$ uniformly on $\DD_{10}$ for $\eps \downarrow 0$. 
Therefore, sending $\eps \downarrow 0$ in \eqref{eq:integral_identity_eps} proves \eqref{eq:dist_derivative_L} 
due to \eqref{eq:derivative_b_lim_eta_to_0}. 

For the proof of \eqref{eq:derivative_b_lim_eta_to_0}, 
we apply $\bar\pt$ to \eqref{eq:b_as_function_of_v}, send $\eta \downarrow 0$ 
and use that 
$\lim_{\eta\downarrow 0} \bar \pt(\eta/v) = \lim_{\eta\downarrow 0}\eta/v = 0$ on $E_{\varrho,\delta}$ as $v \sim 1$ and $\abs{\bar \pt v} \lesssim 1$ on $E_{\varrho,\delta}$ by Lemma~\ref{lem:v_sim_1_bulk} and Corollary~\ref{cor:bound_derivatives}, respectively. 
This proves \eqref{eq:derivative_b_lim_eta_to_0} due to the definition of $\sigma_\varrho$ in \eqref{eq:def_sigma_E}.

What remains is proving $L_\eps \in C^1(\C)$ and \eqref{eq:identity_integral_L_eps_b}. 
We first observe that $v$ is differentiable with respect to $\eta$ and $\zeta$ by Corollary~\ref{cor:bound_derivatives} and positive for $\eta >0$ due to \eqref{eq:decomposition_M} and the 
positive definiteness of $\Im M$. 
Hence, 
differentiating \eqref{eq:v_equation} with respect to $\eta$ and $\zeta$ yield 
\begin{equation} \label{eq:derivative_frac_eta_v} 
\begin{aligned} 
\bigg( \frac{2 v^3}{\eta} + \frac{2(\Re \zeta)^2}{(1 + \varrho + \eta/v)^3} + \frac{2(\Im \zeta)^2}{(1 - \varrho + \eta/v)^3} - \frac{1}{(1 + \eta/v)^2} \bigg) \pt_\eta \bigg( \frac{\eta}{v} \bigg) & = \frac{2v^2}{\eta} 
, \\ 
 \bigg( \frac{2 v^3}{\eta} + \frac{2(\Re \zeta)^2}{(1 + \varrho + \eta/v)^3} + \frac{2(\Im \zeta)^2}{(1 - \varrho + \eta/v)^3} - \frac{1}{(1 + \eta/v)^2} \bigg)
\pt \bigg( \frac{\eta}{v} \bigg) & = \frac{\Re \zeta}{(1 + \varrho + \eta/v)^2} - \frac{\ii \Im \zeta} {(1 - \varrho + \eta/v)^2} 
. 
\end{aligned} 
\end{equation} 
Since $\pt (\eta/v) = -\eta v^{-2} \pt v$ and  $v = \eta^{-1} + O(\eta^{-2})$ by \eqref{eq:M_eta_large}, 
the second identity in \eqref{eq:derivative_frac_eta_v} implies 
$\abs{\pt v} \lesssim \eta^{-2}$ for large $\eta$. 
As $\pt v$ is a continuous function in $\eta$ on $(0,\infty)$ 
by Corollary~\ref{cor:bound_derivatives}, $\pt v$ is Lebesgue-integrable in $\eta$ on $(\eps,\infty)$ for any $\eps >0$.
Thus, $L_\eps \in C^1(\C)$, $\pt L_\eps(\zeta) = - \int_\eps^\infty \pt v(\zeta,\eta) \dd \eta$ for all $\eps >0$ and $\lim_{\eps \to \infty} \pt L_\eps(\zeta) = 0$.  
Therefore, $\pt L_\eps(\zeta)$ is differentiable in $\eta$ and 
\eqref{eq:identity_integral_L_eps_b} is equivalent to $2 \pt_\eps \pt L_\eps(\zeta) = - \pt_\eps b(\zeta,\eps)$ since $\lim_{\eps \to \infty} \pt L_\eps(\zeta) = 0$ 
and $\lim_{\eps \to \infty} b(\zeta,\eps) = 0$ by \eqref{eq:M_bounded}. 
Owing to \eqref{eq:b_as_function_of_v}, the identity $2 \pt_\eps \pt L_\eps(\zeta) = - \pt_\eps b(\zeta,\eps)$ is equivalent to 
\begin{equation} \label{eq:relation_derivatives} 
\frac{2v^2}{\eta} \pt \bigg( \frac{\eta}{v} \bigg) = \bigg( \frac{\Re \zeta}{(1 + \varrho + \eta/v)^2} - \frac{\ii \Im \zeta}{(1 -\varrho + \eta/v)^2} \bigg) \pt_\eta \bigg( \frac{\eta}{v} \bigg). 
\end{equation}
Since \eqref{eq:relation_derivatives} holds due to \eqref{eq:derivative_frac_eta_v}, 
this proves \eqref{eq:identity_integral_L_eps_b} 
and, thus, completes the proof of 
Lemma~\ref{lem:log_potential_sigma_rho}.  \nc 
\end{proof}

\section{Local law for Hermitization and eigenvector delocalisation} \label{sec:local_law_H}

In this section we prove Proposition~\ref{pro:local_law_H_average} and Corollary~\ref{cor:delocalization}, 
see Section~\ref{sec:proof_delocalization_averaged_local_law} below. The main tool is the local law for $\Hf=\Hf_\zeta$ from \eqref{eq:def_Hf}, Theorem~\ref{thm:local_law_H} below, which states that as $n$ tends to infinity the resolvent $\Gf=\Gf(\zeta,\eta)=(\Hf_\zeta-\ii \eta)^{-1}$ converges to the deterministic matrix $\Mf=\Mf(\zeta, \eta)\in \C^{2n \times 2n}$ defined as   
\bels{def of Mf}{
\Mf (\zeta, \eta):= \mtwo{\ii \2v(\zeta,\eta)\1 & \ol{b(\zeta,\eta)}}{b(\zeta,\eta)&\ii \2v(\zeta,\eta) } ,
}
where every entry in this $2\times 2$-block structure is a multiple of the identity matrix in $\C^{n \times n}$, i.e.\ $\Mf = M \otimes \mathbf{1} \in \C^{2 \times 2} \otimes \C^{n \times n}$. 
We recall that $M$ and $v$ as well as $b$ were defined in \eqref{eq:mde_def_self_energy} and \eqref{eq:decomposition_M}, respectively.  

To express the convergence of $\Gf$ to $\Mf$,  we introduce appropriate norms. 
For any random matrix $A \in \C^{l \times l}$ in dimension $l \in \N$ we define the $p$-norms
\bels{def p-norms}{
\norm{A}_p:=\norm{A}^{\rm{iso}}_p:=\sup_{\norm{x},\norm{y}\le 1}\pb{\E{\abs{\scalar{x}{Ay}}}^p}^{1/p},\qquad
\norm{A}^{\rm{av}}_p:=\sup_{\norm{B}\le 1}\pb{\E{\abs{\avg{BA}}}^p}^{1/p}\,,
}
where the supremum is taken over $x,y \in \C^l$ and $B \in \C^{l \times l}$, respectively.  
In \eqref{def p-norms} and in the following, $\scalar{\genarg}{\genarg}$ denotes the Euclidean scalar product on $\C^l$ and $\avg{\genarg}$ the normalised trace on $\C^{l\times l}$.  
 We also allow $p=\infty$ by setting 
$\norm{A}^\#_\infty:=\lim_{p \uparrow \infty} \norm{A}^\#_p$ for $\#=\rm{iso}, \rm{av}$. In particular,  $\norm{\alpha}_p$ denotes the standard $p$-norm for a scalar random variable $\alpha$.
Note that 
\begin{equation} \label{eq:norms_comparable} 
\norm{A}^{\rm{iso}}_p \sim_l \norm{A}^{\rm{av}}_p \sim_l \norm{\norm{A}}_p 
\end{equation} 
for all $A \in \C^{l\times l}$ and $p \in \N$, i.e.\ if $l$ does not depend on $n$ all these norms are comparable. 

With these definitions we state the local law for $\Hf_\zeta$. 
\begin{theorem}[Local law for $\Hf_\zeta$] \label{thm:local_law_H} Let $\gamma \in (0,1)$ and $p\in \N$. Uniformly for all $\eta \in [n^{-1+ \gamma}, n^{100}]$ and $\zeta \in E_{\varrho,\gamma}$,  
the following local law holds:
\bels{local law}{
\norm{\Gf-\Mf}^{\rm{iso}}_p \lesssim_{p,\gamma} \frac{n^{\gamma}}{\sqrt{n \eta}}\,, \qquad \norm{\Gf-\Mf}^{\rm{av}}_p \lesssim_{p,\gamma} \frac{n^{\gamma}}{{n \eta}}\,.
}
\end{theorem}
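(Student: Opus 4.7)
The plan is to derive a perturbed matrix Dyson equation for $\Gf$, reduce its stability analysis to a two-dimensional linear problem via a blockwise partial trace, and run a bootstrap in $\eta$ directly in the $p$-norm framework \eqref{def p-norms}.

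Let $\mathcal{T}\colon\C^{2n\times 2n}\to\C^{2\times 2}$ denote the blockwise partial trace, $\mathcal{T}[A]_{\alpha\beta}:=\avg{A_{\alpha\beta}}$, and lift the self-energy of \eqref{eq:mde_def_self_energy} to $\widehat{\smallS}[A]:=\smallS[\mathcal{T}[A]]\otimes\mathbf{1}$, so that $\widehat{\smallS}[\Mf]=\smallS[M]\otimes\mathbf{1}$. Starting from the resolvent identity $\Gf(\Hf_\zeta-\ii\eta)=\mathbf{1}$ and expanding the random contribution in cumulants of the entries of $\Xf$, using the covariance structure prescribed in Section~\ref{sec:assumption}, one obtains
\bes{
-\Gf^{-1}\;=\;\begin{pmatrix}\ii\eta & \zeta\\ \bar\zeta & \ii\eta\end{pmatrix}\otimes\mathbf{1}\;+\;\widehat{\smallS}[\Gf]\;+\;\mathcal{D},
}
with a random error $\mathcal{D}$ that collects fluctuations of partial traces and higher-cumulant remainders. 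Comparing with the Dyson equation for $M$, multiplying by $\Mf$ and $\Gf$, and applying $\mathcal{T}$ yields the linear identity
\bes{
\smallL[\mathcal{T}[\Gf-\Mf]]\;=\;-\mathcal{T}[\Mf\mathcal{D}\Gf]\;+\;(\text{terms quadratic in }\Gf-\Mf).
}
A crucial simplification is that $\mathcal{T}[\Gf-\Mf]$ lies automatically in $E_-^\perp$: Schur complements give $\Gf_{11}=\ii\eta(\eta^2+(\Xf-\zeta)(\Xf-\zeta)^*)^{-1}$ and $\Gf_{22}=\ii\eta(\eta^2+(\Xf-\zeta)^*(\Xf-\zeta))^{-1}$, whose normalised traces agree as both matrices share the same spectrum, and \eqref{eq:decomposition_M} gives $M_{11}=M_{22}=\ii v$. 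Hence Corollary~\ref{cor:stability_bulk} applies and produces the effective bound $\norm{\mathcal{T}[\Gf-\Mf]}\lesssim_\gamma \norm{\mathcal{T}[\Mf\mathcal{D}\Gf]}$, up to a quadratic feedback in $\Gf-\Mf$.

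The next step is to convert this reduction into a self-improving estimate on $\norm{\Gf-\Mf}^{\mathrm{iso}}_p$ and $\norm{\Gf-\Mf}^{\mathrm{av}}_p$. I would expand $\E|\scalar{x}{(\Gf-\Mf)y}|^{2p}$ and $\E|\avg{B(\Gf-\Mf)}|^{2p}$ directly in cumulants of the entries $x_{ij}$; every resulting term factorises into (i) a scalar of the same form that feeds back into the $p$-norm, (ii) a higher-cumulant remainder controlled by $\E|\xi_j|^\nu<\infty$ together with the deterministic estimate $\norm{\Gf}\le\eta^{-1}$, or (iii) an internal summation that is redeemed by the Ward identity $\sum_i|\scalar{x}{\Gf e_i}|^2=\eta^{-1}\scalar{x}{\Im\Gf\2 x}$. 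The simple correlation and mean structure of $\Hf_\zeta$ keeps this combinatorics tractable and, as stressed in the introduction, lets the isotropic and averaged cases run in tandem without a separate isotropic step. The bootstrap is initialised at $\eta=n^{100}$, where \eqref{eq:M_eta_large} together with $\norm{\Gf}\le\eta^{-1}$ yields the bound trivially, and then propagated down to $\eta=n^{-1+\gamma}$ along a polynomially fine $\eta$-grid using the $\eta^{-2}$-Lipschitz continuity of $\Gf$ in $\eta$.

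The main obstacle I anticipate is the isotropic estimate at the optimal rate $1/\sqrt{n\eta}$: cumulant expansion of $\scalar{x}{(\Gf-\Mf)y}$ generates quadratic-in-resolvent expressions whose cancellations must be extracted with the same uniformity in $x,y\in\C^{2n}$ that the averaged case enjoys in $B\in\C^{2n\times 2n}$. The $p$-norm formalism of \eqref{def p-norms} is tailored to exactly this difficulty: by placing the supremum over $x,y$ (resp.\ $B$) \emph{inside} the expectation, it obviates the need to track a high-probability net of deterministic vectors and compresses the entire argument into a single fixed-point inequality for $\norm{\Gf-\Mf}^{\mathrm{iso}}_p$ and $\norm{\Gf-\Mf}^{\mathrm{av}}_p$.
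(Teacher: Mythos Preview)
Your outline follows the paper's strategy: perturbed Dyson equation, blockwise partial trace, the $E_-^\perp$ observation via Schur complements, and Corollary~\ref{cor:stability_bulk} on the reduced $2\times 2$ problem. Two concrete mechanisms are missing, and without them the argument does not close.

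First, the quadratic feedback in the stability equation (your ``terms quadratic in $\Gf-\Mf$'') means that after inverting $\smallL$ and passing back to the full matrix, any bound on $\norm{\Deltaf}_p$ actually requires control of $\norm{\Deltaf}_{2p}$; see \eqref{Delta matrix bound}. Thus your ``single fixed-point inequality'' is really an infinite hierarchy in $p$ and does not close by itself. The paper handles this with the auxiliary $\star$-norm \eqref{def of star norm}, a weighted sum $\sum_k n^{-k/K}\norm{\cdot}_{2^k}+n^{-2}\norm{\cdot}_\infty$ that converts the $p\to 2p$ shift into a harmless $n^{1/K}$ factor and terminates the tail via the trivial bound $\norm{\Deltaf}_\infty\le 2/\eta$. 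Some device of this type is indispensable. Second, ``propagate down along a polynomially fine $\eta$-grid using Lipschitz continuity'' is not enough for the bootstrap: Lipschitz continuity tells you $\norm{\ul{\Deltaf}}$ varies slowly in $(\zeta,\eta)$, not that it stays small once you leave the regime where the a~priori bound holds. The paper's bootstrap (Lemma~\ref{lmm:Bootstrapping}) first multiplies the stability equation by the indicator $\bbm{1}(\norm{\ul{\Deltaf}}\le c/\kappa_\delta)$ so that the quadratic term can be absorbed, yielding \eqref{ul Delta quadratic bound}; this produces a \emph{forbidden interval} for $\norm{\ul{\Deltaf}}$ with very high probability (see \eqref{gap in ul Delta}), and only then is continuity in $(\zeta,\eta)$ invoked to conclude that $\norm{\ul{\Deltaf}}$ cannot cross the gap from below. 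Finally, note that the paper does not redo any cumulant expansion: the bounds on the error matrix $\Df$ in Proposition~\ref{prp:Bound on error matrix} and the global law in Proposition~\ref{prp:Isotropic Global law} are imported from \cite{Erdos2017Correlated} (applicable because $\wh{\SEop}\Mf=\SEop\Mf$), so the object to expand is $\Df=(\Hf+\Zf+\wh{\SEop}\Gf)\Gf$, not $\Gf-\Mf$ itself, with the Hadamard correction $(\wh{\SEop}-\SEop)\Gf\cdot\Gf$ from \eqref{relation wh S and S} treated separately via \eqref{deltaSGG bounds av} and \eqref{deltaSGG bounds}.
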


The proof of Theorem~\ref{thm:local_law_H} will be presented in Section~\ref{sec:subsec_proof_local_law_H} below. 
For $\eta \in [n^{-\eps}, n^{100}]$ with a very small $\eps>0$, it will directly follow from \cite[Theorem~2.1]{Erdos2017Correlated}.  As it stands, equation \eqref{eq:mde_def_self_energy} is unstable in the local regime $\eta \le n^\eps$. Thus, the results from \cite{Erdos2017Correlated} do not directly extend to such small $\eta$. We will use the refined stability from Proposition~\ref{pro:linear_stability}, orthogonal to the unstable direction $E_-$, to show \eqref{local law} in the local regime.

\subsection{Proofs of Proposition~\ref{pro:local_law_H_average} and Corollary~\ref{cor:delocalization}} 
\label{sec:proof_delocalization_averaged_local_law}

We now conclude Proposition~\ref{pro:local_law_H_average} and Corollary~\ref{cor:delocalization} from Theorem~\ref{thm:local_law_H}.

\begin{proof}[Proof of Proposition~\ref{pro:local_law_H_average}] 
We first note that $\avg{\Mf} = \ii v$ by the definition of $\Mf$ in \eqref{def of Mf}. 
Thus, Proposition~\ref{pro:local_law_H_average} follows 
directly from Theorem~\ref{thm:local_law_H} with suitably chosen $\gamma$ and $p$ 
as
\[ \P \bigg( \absb{\avg{\Gf(\zeta,\eta)} - \ii v (\zeta,\eta)} \geq \frac{n^\eps}{n \eta} \bigg) 
\leq \frac{n^p \eta^p}{n^{\eps p}} \E \abs{\avg{\Gf(\zeta,\eta)-\Mf(\zeta,\eta)}}^p \leq 
 \frac{n^p \eta^p}{n^{\eps p}} \big(\norm{\Gf(\zeta,\eta) - \Mf(\zeta,\eta)}_p^{\rm{av}}\big)^p \] 
due to Markov's inequality. 
\end{proof}

We use a standard argument, adjusted to our setting, to obtain eigenvector delocalisation from the resolvent control in the local law, see e.g.\  \cite{ESY2009}  
for an early version of this argument.

\begin{proof}[Proof of Corollary~\ref{cor:delocalization}] 
Fix $\wf \in \C^n$ and $\eps>0$. 
Let $\uf \in V_\delta$ and $\zeta \in E_{\varrho,\delta}$ such that $\Xf \uf = \zeta \uf$. 
Thus, $\Hf_\zeta (0,\uf)^t = 0$.
We extend $(0,\uf)^t/\norm{(0,\uf)^t}$ to an orthonormal basis 
$(0,\uf)^t/\norm{(0,\uf)^t}$, $\vf_2$, \ldots, $\vf_{2n}$ 
of $\C^{2n}$ 
consisting of eigenvectors of $\Hf_\zeta$ with corresponding eigenvalues $\lambda_1(\zeta) = 0$, $\lambda_2(\zeta)$, \ldots 
$\lambda_{2n}(\zeta)$. 
Hence, for any $\xf \in \C^{2n}$ and $\eta >0$, the spectral theorem for $\Hf_\zeta$ yields 
\begin{equation} \label{eq:proof_delocalization_aux1} 
\Im \scalar{\xf}{\Gf(\zeta,\eta) \xf} = \frac{\abs{\scalar{\xf}{(0,\uf)^t}}^2}{\norm{(0,\uf)^t}^2 \eta } + \sum_{i=2}^{2n} \frac{\eta \abs{\scalar{\xf}{\vf_i}}^2}{\lambda_i(\zeta)^2 + \eta^2}  \geq \frac{1}{\eta} \frac{\abs{\scalar{\wf}{\uf}}^2}{\norm{\uf}^2}, 
\end{equation} 
where we chose $\xf = (0,\wf)^t$ in the last step. 
Owing to \eqref{eq:proof_delocalization_aux1}, for any $\eta >0$, we have the inclusion of events 
\begin{equation} \label{eq:proof_delocalization_aux2} 
\big\{ \exists \uf \in V_\delta \colon \abs{\scalar{\wf}{\uf}} \geq n^{-1/2 + \eps} \norm{\wf} \norm{\uf} \big\}
\subset \big\{ \exists \zeta \in E_{\varrho, \delta} \colon \eta \abs{\scalar{\xf}{\Gf(\zeta,\eta)\xf}} \geq n^{-1 + 2\eps} \norm{\xf}^2 \big\},  
\end{equation}
 where $\xf \defeq (0,\wf)^t$. 

We now show that $\abs{\scalar{\xf}{\Gf(\zeta,\eta)\xf}}$ is bounded even on small scales $\eta$. 
Since $\norm{\Mf} = \norm{M} \lesssim 1$ by \eqref{eq:M_bounded}, the bound on $\norm{\Gf - \Mf}_p^{\rm{iso}}$ 
in \eqref{local law} with suitably chosen $\gamma$ and $p$ as well as Markov's inequality imply that, for each $\gamma \in (0,1)$, we have $\abs{\scalar{\xf}{\Gf(\zeta,\eta) \xf}} \lesssim \norm{\xf}^2$ with 
very high probability uniformly for all $\zeta \in E_{\varrho,\gamma}$, where $\eta = n^{-1 + \gamma}$. 
As $\zeta \mapsto \scalar{\xf}{\Gf(\zeta,\eta) \xf}$ is Lipschitz-continuous with Lipschitz-constant $\lesssim n^2$ for $\eta \geq n^{-1}$, a grid- and continuity-argument in $\zeta$ yields that,  
for any $\gamma \in (0,1)$, the bound 
$\max_{\zeta \in E_{\varrho, \gamma}} \abs{\scalar{\xf}{\Gf(\zeta,\eta)\xf}} \lesssim \norm{\xf}^2$ 
holds with very high probability for $\eta = n^{-1 + \gamma}$.  
This proves Corollary~\ref{cor:delocalization} due to the inclusion \eqref{eq:proof_delocalization_aux2} 
with $\eta = n^{-1 + \gamma}$ and sufficiently small $\gamma >0$. 
\end{proof}

\subsection{Proof of Theorem~\ref{thm:local_law_H}} \label{sec:subsec_proof_local_law_H} 

In the regime $\eta \geq 1$, Theorem~\ref{thm:local_law_H} directly follows from \cite[Theorem~2.1]{Erdos2017Correlated} (see also Proposition~\ref{prp:Isotropic Global law} below). 
Therefore, we focus on the regime $\eta \leq 1$. 

The proof of Theorem~\ref{thm:local_law_H} is based on the realisation that $\Gf$ approximately satisfies the matrix Dyson equation 
\bels{MDElarge}{
1+(\ii \eta + \Zf+  {\SEop}\Mf)\Mf=0\,,
}
where $\Zf=-\E \1 \Hf\in \C^{2n \times 2n}$ and $\SEop$ is the natural extension of $\scr{S}$ from \eqref{eq:mde_def_self_energy} to $\C^{2n \times 2n}$, i.e.\ 
\bels{def of Z and cal S}{
 \Zf = \mtwo{0 & \zeta}{\ol{\zeta} & 0}\,, \qquad \qquad \SEop \Af = \mtwo{\avg{\Af_{22}} & \varrho \avg{\Af_{21}}}{\varrho \avg{\Af_{12}} & \avg{\Af_{11}}}\,, \qquad \Af = \begin{pmatrix} \Af_{11} & \Af_{12} \\ \Af_{21} & \Af_{22} \end{pmatrix}  \,.
}
The matrix $\Mf$ defined in \eqref{def of Mf} solves \eqref{MDElarge}. 

For consistency with the presentation in \cite{Erdos2017Correlated} we introduce the self-energy operator 
\[
\wh{\SEop}\Af:= \E\1(\Hf+\Zf) \Af(\Hf +\Zf)\,, 
\]
 which is a slight modification of $\SEop$, Indeed, one easily verifies that
 \bels{relation wh S and S}{
 \wh{\SEop}\Af =\SEop \Af+\mtwo{\Pf \odot \Af_{22}^t & \Qf \odot \Af_{21}^t }{\Qf^* \odot \Af_{12}^t &\Pf \odot \Af_{11}^t }\,,
 }
where $\Pf=(p_{ij})_{i,j=1}^n$ and $\Qf=(q_{ij})_{i,j=1}^n$ are the $n \times n$-matrices with entries 
\bels{def of Q and P}{
p_{ij}:=\E \2x_{ij} \ol{x}_{ji}\,, \qquad p_{ii} := 0 \,,\qquad  q_{ij} := \E\2 x_{ij}^2\,, \qquad q_{ii} := 0
}
for $i$, $j \in \qq{n}$ with $i \neq j$, 
and $\odot$ denotes the entrywise Hadamard product. 
From the definition of the resolvent $\Gf$ we see that 
\bels{perturbed MDE}{
{1}+(\ii \eta + \Zf+  \wh{\SEop}\Gf)\Gf=\Df\,, \qquad \Df:=(\Hf+\Zf+ \wh{\SEop}\Gf)\Gf\,.
}
We interpret this equation as a perturbation of \eqref{MDElarge} with error matrix $\Df$. This point of view is justified by the following proposition that we import from \cite[Theorem~4.1]{Erdos2017Correlated}.
\begin{proposition}[Bound on error matrix] \label{prp:Bound on error matrix}
Let $\eps>0$ and $p \in \N$. Then there is $C_*>0$ such that, uniformly 
for $\eta \in [n^{-1},1]$ and $\zeta \in \D_{10}$, we have the following bounds on the error matrix
\bels{isotropic D bound}{
\norm{\Df}_{p}\lesssim_{p,\eps}n^\eps\sqrt{\frac{\norm{\im \Gf}_{q}}{n \eta}} (1+\norm{\Gf}_q)^{C_*}(1+n^{-1/4}\norm{\Gf}_q)^{C_*p}
}
\bels{average D bound}{
\norm{\Df}^{\rm{av}}_{p} \lesssim_{p,\eps}n^\eps  {\frac{\norm{\im \Gf}_q}{n \eta}}(1+\norm{\Gf}_q)^{C_*}(1+n^{-1/4}\norm{\Gf}_q)^{C_*p}
}
with $q=C_* p^4/\eps$ (choose $\mu=1/4$). 
\end{proposition}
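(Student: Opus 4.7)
The plan is to estimate the moments $\E|\scalar{x}{\Df y}|^p$ and $\E|\avg{B\Df}|^p$ directly via a cumulant expansion in the centred random matrix $\Wf := \Hf+\Zf$. Writing $\Df = \Wf\Gf + (\wh\SEop\Gf)\Gf$, the entries $\{w_{ii}\} \cup \{(w_{ij},w_{ji}): i<j\}$ form an independent family, and the moment condition $\E|\xi_j|^\nu<\infty$ for every $\nu$ yields cumulant bounds $|\kappa_{k+1}(w_{\alpha_0\beta_0},\ldots,w_{\alpha_k\beta_k})| \lesssim_k n^{-(k+1)/2}$, with the only nonzero joint cumulants of order two being those dictated by $\Zf$ and the covariance structure encoded in $\smallS$, $\Pf$, $\Qf$. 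I would write, for instance,
\begin{equation*}
\E \scalar{x}{\Df y}\,F(\Wf) \;=\; \sum_{\alpha,\beta} \bar x_\alpha (\Gf y)_\beta\, \E[w_{\alpha\beta} F(\Wf)] + \E\scalar{x}{(\wh\SEop\Gf)\Gf y}\,F(\Wf),
\end{equation*}
where $F(\Wf)$ is the product of the remaining $p-1$ factors (and their conjugates) in $|\scalar{x}{\Df y}|^p$, and expand the first term using
\begin{equation*}
\E[w_{\alpha\beta} F(\Wf)] = \sum_{k=1}^{K} \sum_{(\alpha_1\beta_1),\ldots,(\alpha_k\beta_k)} \frac{\kappa_{k+1}(w_{\alpha\beta},w_{\alpha_1\beta_1},\ldots,w_{\alpha_k\beta_k})}{k!}\, \E[\pt_{\alpha_1\beta_1}\cdots \pt_{\alpha_k\beta_k} F(\Wf)] + R_K.
\end{equation*}

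The key cancellation occurs at $k=1$: the terms in which the second derivative variable is not in $F$ reassemble to $-\E\scalar{x}{(\wh\SEop\Gf)\Gf y}\,F$, exactly killing the self-energy subtraction. What survives are (a) $k=1$ terms where the derivative lands on a $\Gf$-factor inside $F$, and (b) all higher cumulant terms $k\ge 2$. Each derivative $\pt_{\alpha\beta}$ acts as $\pt_{\alpha\beta}\Gf = -\Gf E_{\alpha\beta}\Gf$, so every such term is a sum over indices of a product of resolvent entries against a cumulant of order $\geq 2$ (for the surviving $k=1$ terms) or of order $\geq 3$ (for $k\geq 2$). The remainder $R_K$ from truncating at $K=K(p,\eps)$ is handled by a crude $\norm{\Gf}_q^{CK}$ bound together with $n^{-(K+1)/2}$, producing the $(1+n^{-1/4}\norm{\Gf}_q)^{C_* p}$ factor after choosing $K$ proportional to $p$.

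For each surviving term, the $\alpha_j,\beta_j$ indices that are contracted produce sums of resolvent entries. The crucial mechanism is the Ward identity $\Gf\Gf^* = \eta^{-1}\Im\Gf$, i.e.\ $\sum_j |G_{ij}|^2 = (\Im\Gf)_{ii}/\eta$, which I would apply whenever a summed index appears in a conjugate pair $G_{\cdots j}\overline G_{j\cdots}$ (these pairs appear naturally because $|\scalar{x}{\Df y}|^p$ equals a product of $\Df$-entries and their complex conjugates). Every Ward-reducible summation converts $n^{-1}\sum$ into a factor $\norm{\Im\Gf}_q/(n\eta)$, while remaining off-diagonal or unsummed entries are bounded in $\norm{\Gf}_q$. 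Counting the powers carefully shows that the isotropic estimate loses a single square root (leading to $\sqrt{\norm{\Im\Gf}_q/(n\eta)}$), whereas the averaged version $\avg{B\Df}$ allows an additional summation that improves the bound by another $\sqrt{\norm{\Im\Gf}_q/(n\eta)}$, giving the two stated estimates. Hölder's inequality against the remaining $(p-1)$ factors forces $q$ to grow polynomially in $p$, and the combinatorics of choosing contraction patterns (counted uniformly via Gaussian integration by parts or Wick pairings at $k\ge 2$) is responsible for the $n^\eps$ slack and the overall exponent $C_* p^4/\eps$.

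The main obstacle is the systematic organisation of the resulting graphical sums: at each cumulant order one must classify all contraction patterns of free and summed indices, identify the "backbone" along which Ward estimates can be applied, and show that every surviving pattern accumulates at least one Ward-type $\norm{\Im\Gf}_q/(n\eta)$ factor for the isotropic case and two for the averaged case. This is precisely the combinatorial framework of \cite{Erdos2017Correlated}, where the block/Hadamard structure inherited from $\wh\SEop$ via \eqref{relation wh S and S} has to be handled term by term in $\Pf, \Qf$ and $\SEop$; obtaining the sharp Ward-factor count, rather than a weaker $1/\sqrt{n\eta}$, is the delicate part of the argument and the reason one invokes the full strength of the correlated-matrix machinery rather than reproving it here.
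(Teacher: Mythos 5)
The paper does not prove this proposition at all: immediately before the statement it is imported verbatim from \cite[Theorem~4.1]{Erdos2017Correlated}, with the parameter choice $\mu = 1/4$ (which accounts for the exponent $-1/4$ in the factor $(1+n^{-1/4}\norm{\Gf}_q)^{C_*p}$). Your cumulant-expansion roadmap --- the $k=1$ cancellation of the self-energy subtraction $(\wh\SEop\Gf)\Gf$, the Ward identity $\Gf\Gf^*=\eta^{-1}\Im\Gf$ converting summed conjugate index pairs into factors $\norm{\Im\Gf}_q/(n\eta)$, and the bookkeeping of the $\Pf,\Qf$ Hadamard correlation structure from \eqref{relation wh S and S} --- is an accurate sketch of how that reference establishes the bound, and you correctly conclude by deferring to the full combinatorial machinery of \cite{Erdos2017Correlated} rather than redoing it, which is exactly what the paper itself does.
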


In the remainder of this section we will infer the bounds \eqref{local law} for $\Deltaf:=\Gf-\Mf$ from the bounds on the error matrix in Proposition~\ref{prp:Bound on error matrix}. 
 We subtract \eqref{MDElarge} from \eqref{perturbed MDE} to find
\bels{Delta equation}{
\Deltaf-\Mf(\SEop\Deltaf)\Mf =\Mf (\SEop\Deltaf)\Deltaf-\Mf (\Df +((\wh{\SEop}-\SEop)\Gf)\Gf)\,.
}
This equation is equivalent to the last equation in the proof of \cite[Theorem~5.2]{Erdos2017Correlated} with the translation $\Deltaf\to V$, $\wh{\SEop} \to \cal{S}$, $\Mf \to M$. However, in \eqref{Delta equation} we have written the linear and quadratic term in $\Deltaf$ in terms of $\SEop$ instead of  $\wh{\SEop}$ and, thus, added the additional error term $((\wh{\SEop}-\SEop)\Gf)\Gf)$. 
Note that 
$\Mf$ also satisfies \eqref{MDElarge} with $\SEop$ replaced by $\wh{\SEop}$ since 
$\wh{\SEop}\Mf = \SEop\Mf$ due to the block diagonal structure of $\Mf$ in \eqref{def of Mf}. 
The purpose of writing \eqref{Delta equation} in this fashion will become apparent when we take partial traces inside the $2 \times  2$ block structure underlying \eqref{Delta equation}. In contrast to the setup in \cite{Erdos2017Correlated}, equation \eqref{Delta equation} is unstable because the linear stability operator acting on $\Deltaf$ on its left-hand side has a non-trivial kernel. This instability also prevents us from using \cite[Theorem~5.2]{Erdos2017Correlated} directly to show stability of \eqref{Delta equation}. The bounded invertibility of the stability operator in \cite{Erdos2017Correlated} was ensured by \cite[Assumption~(E)]{Erdos2017Correlated} which is violated by $\wh{\SEop}$ as well as ${\SEop}$. 

We will now show how stability of \eqref{Delta equation} is nevertheless achieved when the equation is restricted to a codimension one subspace. 
First we reduce \eqref{Delta equation} to a matrix equation on $\C^{2 \times 2}$ through the partial trace operation $\C^{2n \times 2n} \to \C^{2 \times 2},\Af \mapsto \ul{\Af}$ defined via
\[
\ul{\Af}:= \mtwo{\avg{\Af_{11}} & \avg{\Af_{12}}}{\avg{\Af_{21}}& \avg{\Af_{22}}}\,, \qquad \Af=\mtwo{\Af_{11} & \Af_{12}}{\Af_{21} & \Af_{22}}\,,
\]
with $\Af_{ij} \in \C^{n \times n}$ for all $i,j=1,2$. We apply this operation to \eqref{Delta equation} and find
\bels{ul Delta equation}{
\smallL \ul{\Deltaf} = M (\smallS\ul{\Deltaf})\ul{\Deltaf}-M \2D\,, \qquad D:= \ul{\Df} +\ul{((\wh{\SEop}-\SEop)\Gf)\Gf}\,,
}
where $M =\ul{\Mf}$ and $\smallL\colon \C^{2 \times 2} \to \C^{2 \times 2}$ is the stability operator of the Dyson equation, \eqref{eq:mde_def_self_energy}, defined in 
 \eqref{eq:def_smallL}. 
Recall from Proposition~\ref{pro:linear_stability} that $\scr{L}$ leaves $E_-^\perp$ invariant, that 
\bels{kappa upper bound}{
\kappa_\delta:=\sup_{\zeta \in E_{\varrho,\delta}}\norm{\scr{L}^{-1}|_{E_-^\perp}}\lesssim_\delta 1
}
and that $\ul{\Deltaf} \perp E_-$ because  $\avg{ \Gf_{11}} = \avg{ \Gf_{22}}$ (cf.\ \cite[Lemma~B.5]{Altcorrcirc}) as well as $\avg{\Mf_{11}} = \ii v = \avg{\Mf_{22}}$ (cf.\ \eqref{def of Mf}).
Thus, taking the $\norm{\2\cdot\2}_p$-norm from \eqref{def p-norms} after inverting $\scr{L}$ on $E_-^\perp$  in \eqref{ul Delta equation} and multiplying with $\bbm{1}(\norm{\ul{\Deltaf}}\le c/\kappa_\delta)$ with some small enough constant $c>0$  yields  
\bels{ul Delta quadratic bound}{
\norm{\ul{\Deltaf}\bbm{1}(\norm{\ul{\Deltaf}}\le c/\kappa_\delta)}_p \lesssim \kappa_\delta \norm{D}_p\,,
}
uniformly for $\zeta \in E_{\varrho,\delta}$. 
Recall the comparability of the norms from \eqref{eq:norms_comparable} for $l = 2$. 
Here, $\norm{D}_p$ satisfies the bound
\bels{bound on d}{
\norm{D}_p\lesssim n^{-1} \norm{{\Gf^* \Gf}}_p + \norm{\Df}_p^{\rm{av}}= \frac{\norm{\im \Gf}_p}{n\1 \eta}+ \norm{\Df}_p^{\rm{av}}\,.
}
The first inequality in \eqref{bound on d} holds due to the definition of $D$  in \eqref{ul Delta equation} and 
\bels{deltaSGG bounds av}{
\norm{((\wh{\SEop}-\SEop)\Gf)\Gf}_p^{\rm{av}} \lesssim n^{-1} {\norm{{\Gf^*\Gf}}_p}\,,
}
which itself follows from  the expression for $\wh{\SEop}-\SEop$ in \eqref{relation wh S and S}, that the matrices $\Qf$ and $\Pf$ from \eqref{def of Q and P} satisfy 
\begin{equation} \label{eq:bound_p_ij_q_ij} 
\abs{p_{ij}}+\abs{q_{ij}} \lesssim n^{-1} 
\end{equation} 
and from the general inequality
\[
(\norm{\Af\Bf}_p^{\rm{av}})^2\le \norm{\Af^*\Af}_p^{\rm{av}}\norm{\Bf^*\Bf}_p^{\rm{av}} \le \norm{\Af^*\Af}_p\norm{\Bf^*\Bf}_p
\]
for any random matrices $\Af,\Bf$.
For the equality in \eqref{bound on d} we used the Ward identity 
\begin{equation} \label{eq:ward_identity} 
\Gf^* \Gf = \Gf \Gf^* = \frac{\Im \Gf}{\eta} . 
\end{equation}

We will now use \eqref{ul Delta quadratic bound} together with \eqref{bound on d} to show that $\norm{{\Deltaf}}_p\ll1 $ (cf.\ Theorem~\ref{thm:local_law_H}). We briefly explain the strategy behind the proof of this fact. Through Proposition~\ref{prp:Bound on error matrix} and \eqref{bound on d} a bound of the form $\norm{\Deltaf}_p\lesssim 1$ for all $p\in \N$ implies $\norm{D}_p\ll 1$ for all $p \in \N$ because $\norm{\Gf}_p\le \norm{\Mf}_p+\norm{\Deltaf}_p\lesssim 1$ in this case and $n \eta \gg 1$. This, in turn, implies $\norm{\ul{\Deltaf}}_p\ll1 $ for all $p$ because of \eqref{ul Delta quadratic bound}. 
Finally, we estimate $\norm{\Deltaf}_p$ in terms of $\norm{\ul{\Deltaf}}_{2p}$ to get $\norm{\Deltaf}_p\ll1 $.  Altogether this argument shows that $\norm{\Deltaf}_p\lesssim 1$ implies $\norm{\Deltaf}_p\ll1 $ on all of $\A_{\delta,\gamma}$ for any $\gamma >0$,  where we introduced the parameter set 
\[
\mathbb{A}_{\delta,\gamma}:= E_{\varrho,\delta} \times [n^{-1+\gamma},1].
\]  
This implication can be bootstrapped from a regime far away from the local regime $\eta \sim n^{-1}$, i.e.\ from a global law with $\eta \sim 1$. The global law is imported from \cite[Theorem~2.1]{Erdos2017Correlated}. 
Using $\norm{\Gf} \leq \eta^{-1} \leq n$ on $\A_{\delta,\gamma}$ in combination with \cite[Lemma~5.4 (i)]{Erdos2017Correlated}, 
we obtain the following version of \cite[Theorem~2.1]{Erdos2017Correlated}  
since $M$ in \cite{Erdos2017Correlated} coincides with $\Mf$ from \eqref{def of Mf} due to $\wh{\SEop}\Mf = \SEop\Mf$.

\begin{proposition}[Global law] 
\label{prp:Isotropic Global law}
There is a universal constant $c>0$ such that for any $\eps>0$ the global law
\[
\norm{\Deltaf}_p \lesssim_{p,\eps} \frac{n^{\eps}}{\sqrt{n\eta}} , \qquad \norm{\Deltaf}_p^{\rm{avg}} \lesssim_{p,\eps} \frac{n^\eps}{n\eta} 
\]
holds for $\zeta \in \D_{10}$ and $\eta \in [n^{-c\1\eps},n^{100}]$. 
\end{proposition}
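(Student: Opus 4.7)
The plan is to derive Proposition~\ref{prp:Isotropic Global law} directly from \cite[Theorem~2.1]{Erdos2017Correlated} applied to the Hermitian random matrix $\Hf_\zeta + \Zf$, whose entries are mean-zero, form jointly independent pairs $(x_{ij}, x_{ji})$ for $i<j$ with independent diagonal $x_{ii}$, and have all moments controlled at the scale $n^{-\nu/2}$. The covariance structure is encoded by the self-energy operator $\wh{\SEop}$ from \eqref{relation wh S and S}, which equals the explicit block operator $\SEop$ plus a Hadamard-product perturbation with entries bounded by $n^{-1}$, cf.\ \eqref{eq:bound_p_ij_q_ij}. The flatness, boundedness and moment hypotheses of \cite{Erdos2017Correlated} follow at once from this decomposition.

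The second step is to identify the deterministic approximation produced by the imported theorem with our $\Mf$ from \eqref{def of Mf}. That theorem works with the matrix Dyson equation $1 + (\ii\eta + \Zf + \wh{\SEop}\Mf^{\rm{Erd}})\Mf^{\rm{Erd}} = 0$ and yields a unique solution $\Mf^{\rm{Erd}}$ with positive imaginary part. Since $\Mf = M \otimes \mathbf{1}_n$ is block-diagonal with each block a scalar multiple of $\mathbf{1}_n$, and since the matrices $\Pf$, $\Qf$ in \eqref{def of Q and P} have vanishing diagonal, the Hadamard-product terms in \eqref{relation wh S and S} annihilate $\Mf$, yielding $\wh{\SEop}\Mf = \SEop\Mf$. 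Thus $\Mf$ also solves \eqref{MDElarge}, and uniqueness gives $\Mf^{\rm{Erd}} = \Mf$. The isotropic and average bounds of \cite[Theorem~2.1]{Erdos2017Correlated} are then transported into the $\norm{\cdot}_p^{\rm{iso}}$ and $\norm{\cdot}_p^{\rm{av}}$ framework used here via \cite[Lemma~5.4(i)]{Erdos2017Correlated}. The trivial a priori bound $\norm{\Gf} \leq \eta^{-1} \leq n$ valid on $\mathbb{A}_{\delta,\gamma}$ supplies the $\norm{\Gf}_q$-factors appearing in the error estimates, in the same way they appear in Proposition~\ref{prp:Bound on error matrix}.

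The main technical point, which fixes the universal constant $c$, is the quantitative invertibility of the stability operator of the Dyson equation on the entire space $\C^{2n \times 2n}$ throughout the claimed range $\eta \in [n^{-c\eps}, n^{100}]$. This is more demanding than the restricted estimate of Proposition~\ref{pro:linear_stability}: on the $2\times 2$ reduction one checks directly that $\smallL$ acts along $E_-$ by multiplication by $\eta/(\eta + v)$, so its inverse on that direction is of order $v/\eta$ and blows up as $\eta \downarrow 0$. Hence the hypothesis of \cite[Theorem~2.1]{Erdos2017Correlated} can only be met at the price of a polynomial factor $n^{c\eps}$, which fixes the lower threshold $n^{-c\eps}$ with $c$ chosen small enough depending on the bulk distance and on $\varrho$. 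This is exactly why Proposition~\ref{prp:Isotropic Global law} stops short of the optimal scale $\eta \sim n^{-1}$, and why the refined partial-trace analysis encoded in \eqref{ul Delta equation} together with Proposition~\ref{pro:linear_stability} is needed to bootstrap the global law down to that scale.
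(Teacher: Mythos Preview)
Your proposal is correct and follows essentially the same route as the paper, which simply imports \cite[Theorem~2.1]{Erdos2017Correlated}, identifies the deterministic approximation there with $\Mf$ via $\wh{\SEop}\Mf=\SEop\Mf$, and converts the resulting high-probability bounds into $p$-norm bounds through \cite[Lemma~5.4(i)]{Erdos2017Correlated} and the trivial estimate $\norm{\Gf}\le\eta^{-1}\le n$. Your computation that $\smallL E_- = \tfrac{\eta}{\eta+v}E_-$ is exactly the mechanism behind the threshold $\eta\ge n^{-c\eps}$; one small slip is that the proposition is stated uniformly for $\zeta\in\D_{10}$, so the constant $c$ does not depend on any bulk distance~$\delta$.
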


Before we formalise the bootstrapping in Lemma~\ref{lmm:Bootstrapping} we require a few preparations. 
Through \eqref{Delta equation} and the definition of $\SEop$ in \eqref{def of Z and cal S} in terms of partial traces we bound $\norm{\Deltaf}_p$ in terms of $\norm{\ul{\Deltaf}}_p$ and the error matrix, namely 
\bels{Delta matrix bound}{
\norm{\Deltaf}^\#_p\lesssim\norm{\ul{\Deltaf}}_p + \norm{\ul{\Deltaf}}_{2p} \norm{{\Deltaf}}^\#_{2p}  + \norm{\Df}_p^\#+\norm{((\wh{\SEop}-\SEop)\Gf)\Gf}_p^\#\,,
}
where $\#=\rm{iso}, \rm{av}$ and we used $ \norm{M} \lesssim 1$ by \eqref{eq:M_bounded}. Note that $\norm{\ul{\Deltaf}}_p \sim \norm{\ul{\Deltaf}}^{\rm{av}}_p $ because $\ul{\Deltaf} \in \C^{2 \times 2}$. 

The additional error term in \eqref{Delta matrix bound} which is not covered by Proposition~\ref{prp:Bound on error matrix} satisfied the bounds
\bels{deltaSGG bounds}{
\norm{((\wh{\SEop}-\SEop)\Gf)\Gf}_p^{\rm{iso}} \lesssim n^{-1/2}\norm{\Gf}_{2p} \norm{\Gf \Gf^*}_p^{1/2}
\le \norm{\Gf}_{2p}\pbb{\frac{\norm{\im \Gf}_p}{n \1\eta}}^{1/2}.
}
The first inequality in \eqref{deltaSGG bounds} holds because for any $\bs{x},\bs{y} \in \C^{2n}$ we have 
\[
\E\2\abs{\scalar{\bs{x}}{((\wh{\SEop}-\SEop)\Gf)\Gf\bs{y}}}^{p} \le
\E\2\norm{((\wh{\SEop}-\SEop)\Gf)^*\bs{x}}^p\norm{\Gf\bs{y}}^{p}
\lesssim \frac{1}{n^{p/2}} \norm{\Gf^*\Gf}_{p}^{p/2} \norm{\Gf}_{2p}^p\,.\]
Here, in the last step we used  \eqref{relation wh S and S}, \eqref{eq:bound_p_ij_q_ij} 
 and 
\[
\E\2\norm{(\Rf\odot \Af)\bs{x}}^{2p}  = \E\pbb{\sum_{i=1}^{2n} \abs{\scalar{\bs{e}_i}{\Af\wt{\bs{x}}_i}}^2}^p 
 \le (2n)^p \norm{\Af}_{2p}^{2p}\max_i \norm{\wt{\bs{x}}_i}^{2p}
\leq (2n)^p \norm{\Af^*\Af}_p^{p} \max_{i} \norm{\wt{\bs{x}}_i}^{2p} 
\]
for any random matrix $\Af \in \C^{2n \times 2n}$ and deterministic $\Rf\in \C^{2n \times 2n}$ 
with $\wt{\bs{x}}_i:=(r_{ij}x_j)_{j}$ and $\bs{e}_i = (\delta_{ij})_j$. 
The second inequality in \eqref{deltaSGG bounds} follows from the Ward identity, \eqref{eq:ward_identity}.

Finally, for any random matrix $A \in \C^{l \times l}$ we define a norm that tracks several $p$-norms simultaneously through
\bels{def of star norm}{
\norm{A}_{\star,K}^{\#}:= \sum_{k=0}^{4K} n^{-k/K} \norm{A}_{2^k}^{\#} + n^{-2}\norm{A}_\infty^{\#}\,,
}
for any fixed $K \in \N$ and $\# = \rm{av}$, $\rm{iso}$.
We use the convention $\norm{A}_{\star,K} := \norm{A}_{\star,K}^{\rm{iso}}$. 
This norm is convenient in the following to express inequalities such as  \eqref{Delta matrix bound}, that estimate the $p$-norm of $\Deltaf$ in terms of its $2p$-norm, in a fixed norm. The $\star$-norm and $p$-norm are bounded in terms of each other through
\bels{compare star and p norm}{
n^{-k/K}\norm{A}_{2^k}^{\#}\le \norm{A}_{\star,K}^{\#} \le \frac{1}{1-n^{-1/K}}\norm{A}_{2^{4K}}^{\#}+ n^{-2}\norm{A}_\infty^{\#}\,.
}

With these preparations we now formalise the bootstrapping step in which a rough bound on $\Deltaf$ is transported from the almost global regime $\A_{\delta,\gamma}$ with $\gamma$ close to $1$ to the local regime $\A_{\delta,\gamma}$ with $\gamma\ll 1$. 

\begin{lemma}[Bootstrapping] \label{lmm:Bootstrapping}
There is a constant $c_\ast>0$ depending only on the distribution of $\xi$ such that 
$\norm{\Deltaf}_p \lesssim_{p,\delta,\gamma} n^{-\gamma/6}$  for all $p\in \N$ on  $ \A_{\delta,\gamma}$ implies $\norm{\Deltaf}_p \lesssim_{p,\delta,\gamma}n^{-\gamma/6}$  for all $p\in \N$ on $\A_{ \delta,(1-c_\ast)\gamma}$.
\end{lemma}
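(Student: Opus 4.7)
My plan is to combine a \emph{self-improvement step} at each fixed $(\zeta,\eta)$ with a Lipschitz continuity argument in $\eta$ on a very fine grid. The basic mechanism: at any $(\zeta,\eta) \in \A_{\delta,\gamma}$ the hypothesis $\norm{\Deltaf}_p \lesssim n^{-\gamma/6}$ (valid for \emph{all} $p$) can be fed into the chain Proposition~\ref{prp:Bound on error matrix} $\to$ \eqref{ul Delta quadratic bound} $\to$ \eqref{Delta matrix bound} to extract the much stronger local-law bound $\norm{\Deltaf}_p \lesssim_{p,\eps} n^{\eps}/\sqrt{n\eta}$, and the gap between this strong bound and the weak target $n^{-\gamma/6}$ is then used to push the hypothesis down to a slightly smaller $\eta$.

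For the self-improvement step I first note that the hypothesis gives $\norm{\Gf}_q, \norm{\im\Gf}_q \lesssim 1$ for every $q$, which makes the polynomial prefactors in Proposition~\ref{prp:Bound on error matrix} harmless and yields $\norm{\Df}^{\rm{av}}_p \lesssim n^\eps/(n\eta)$ and $\norm{\Df}^{\rm{iso}}_p \lesssim n^\eps/\sqrt{n\eta}$. Combined with \eqref{deltaSGG bounds av}--\eqref{bound on d} this produces $\norm{D}_p \lesssim n^\eps/(n\eta)$. To apply \eqref{ul Delta quadratic bound} I need to drop the indicator, which is immediate from the uniform-in-$p$ hypothesis via Markov: $\P(\norm{\ul{\Deltaf}}>c/\kappa_\delta) \le n^{-\nu}$ for any $\nu$. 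This yields $\norm{\ul{\Deltaf}}_p \lesssim n^\eps/(n\eta)$. Finally I substitute into \eqref{Delta matrix bound}; the quadratic term $\norm{\ul\Deltaf}_{2p}\norm{\Deltaf}^{\rm{iso}}_{2p}$ is tamed by passing to the $\star$-norm from \eqref{def of star norm} and using \eqref{compare star and p norm} to insert a factor $n^{-1/K}$, which together with the smallness of $\norm{\ul\Deltaf}_{\star,K}$ allows absorption into the left-hand side and gives the strong bound $\norm{\Deltaf}^{\rm{iso}}_p \lesssim n^\eps/\sqrt{n\eta}$.

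For the $\eta$-extension, the deterministic identity $\pt_\eta\Gf = \ii\, \Gf^2$ together with $\norm{\Gf}\le \eta^{-1}$ (and the analogous bound for $\Mf$) gives $\norm{\pt_\eta\Deltaf}_p \le C\eta^{-2}$, so $\norm{\Deltaf}_p$ is Lipschitz in $\eta$ with constant at most $Cn^{2}$ on the range of interest. I discretize $[n^{-1+(1-c_\ast)\gamma}, n^{-1+\gamma}]$ with spacing $n^{-K}$ for some large fixed $K$; the grid has only polynomially many points and each single step changes $\norm{\Deltaf}_p$ by at most $n^{-K+2}$. Starting at $\eta_0 = n^{-1+\gamma}$ where the hypothesis applies, self-improvement gives $\norm{\Deltaf(\eta_0)}_p \lesssim n^{\eps-\gamma/2}$, which is well below $n^{-\gamma/6}$, so the Lipschitz step to $\eta_1$ still verifies the weak hypothesis; iterating establishes the weak bound on the entire grid. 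Throughout this range the self-improved bound is $\lesssim n^{\eps-(1-c_\ast)\gamma/2}$, which stays below $n^{-\gamma/6}$ as soon as $(1-c_\ast)/2 > 1/6$, so any $c_\ast \le 1/2$ works.

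The main obstacle is the quadratic structure of \eqref{Delta matrix bound}, which couples $\norm{\Deltaf}_p$ to $\norm{\Deltaf}_{2p}$ and therefore cannot be closed within a single fixed $p$-norm. This is precisely what the $\star$-norm in \eqref{def of star norm} is designed to handle: through \eqref{compare star and p norm} the quadratic term picks up a prefactor $n^{-1/K}$ that kills the coupling whenever $\norm{\ul\Deltaf}_{\star,K}$ is already small, a condition guaranteed by the all-$p$ form of the hypothesis. A secondary but genuinely needed point is that \eqref{ul Delta quadratic bound} only controls the cutoff quantity $\norm{\ul\Deltaf\,\bbm{1}(\norm{\ul\Deltaf}\le c/\kappa_\delta)}_p$; removing the indicator is resolved cheaply by Markov at a large moment, again using that the hypothesis holds for every $p$.
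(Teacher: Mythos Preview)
Your proposal is correct and follows a genuinely different route from the paper.

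The paper extends the resolvent control to the enlarged domain $\A_{\delta,(1-c_\ast)\gamma}$ in a \emph{single shot} via the monotonicity of $\eta\mapsto\eta\,\norm{\Gf(\zeta,\eta)}_p$, which yields only $\norm{\Gf}_p\lesssim n^{c_\ast\gamma}$ on the new domain; this growing factor feeds into $(1+\norm{\Gf}_q)^{C_*}$ in Proposition~\ref{prp:Bound on error matrix} and forces $c_\ast$ to be chosen small depending on the model constant $C_*$. The indicator in \eqref{ul Delta quadratic bound} is then removed by a \emph{pathwise} gap--continuity argument for the random variable $\norm{\ul\Deltaf}$, uniform over the whole extended domain. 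Your approach instead descends in $\eta$ along a grid of spacing $n^{-K}$, re-establishing the full weak hypothesis $\norm{\Deltaf}_p\lesssim n^{-\gamma/6}$ at each grid point via self-improvement before taking the next Lipschitz step; since this hypothesis gives $\norm{\Gf}_q\lesssim 1$ at every visited point, the polynomial prefactors in Proposition~\ref{prp:Bound on error matrix} are never large and your $c_\ast$ can be taken universal (any $c_\ast<2/3$ works). The price is a genuine induction over polynomially many steps, so one must verify---as you implicitly do---that the implicit constants do not accumulate; they do not, because self-improvement resets them to quantities determined only by $\norm{M}$ and the constants in Proposition~\ref{prp:Bound on error matrix}. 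Both proofs share the $\star$-norm device for absorbing the $p\to 2p$ coupling in \eqref{Delta matrix bound}.
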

\begin{proof}
For transparency of the argument we write $\delta_\ast:=c _\ast\1\gamma$, where we will choose the constant $c_\ast>0$ sufficiently small later in the proof. 
Furthermore, we will omit the dependence on $\delta$ in the notation for the remainder of this proof. In particular, 
all constants implicit in the comparison relation $\lesssim$ may depend on $\delta$ and we write $\A_\gamma= \A_{\delta,\gamma}$. 

We assume that $\norm{\Deltaf}_p \lesssim_{p,\gamma} n^{-\gamma/6}$  holds for all $p$ on  $ \A_\gamma$. 
Since $\norm{\Mf}=\norm{M} \lesssim 1$ by \eqref{eq:M_bounded} we see that the resolvent is bounded on  $ \A_\gamma$, i.e.\ that $\norm{\Gf}_p \lesssim_{p,\gamma} 1$.
The function $\eta \mapsto \eta \norm{\Gf(\zeta,\eta)}_p$ is monotonously increasing which is seen by checking positivity of its derivative. 
In particular,
\[
\norm{\Gf(\zeta,n^{-\delta_\ast}\eta)}_p \le n^{\delta_\ast} \norm{\Gf(\zeta,\1\eta)}_p\,.
\]
We conclude that $\norm{\Gf}_p \lesssim_{p,\gamma} n^{\delta_\ast}$ holds on $\A_{\gamma-\delta_\ast}$. 
This simplifies the 
bounds on the error matrix on $\A_{\gamma-\delta_\ast}$ from Proposition~\ref{prp:Bound on error matrix} to 
\bels{D rough estimates}{
\norm{\Df}_{p}\lesssim_{p,\eps,\gamma}\frac{n^{\eps+C_*\1\delta_\ast+\delta_\ast/2}}{\sqrt{n \eta}}\lesssim_\gamma n^{-\gamma/3}\,, \qquad
\norm{\Df}^{\rm{av}}_{p} \lesssim_{p,\eps,\gamma}{\frac{n^{\eps+C_*\1\delta_\ast+\delta_\ast}}{n \eta}}
\lesssim_\gamma n^{-\gamma/2}\,.
}
For the final bounds we used that $n\eta \geq n^{\gamma-\delta_\ast}$ by definition of $\A_{\gamma-\delta_\ast}$ and we choose $\eps = \delta_\ast = c_\ast\1\gamma$ for  sufficiently small  $c_\ast>0$ depending only on the distribution of $\xi$ through $C_*$.

From \eqref{ul Delta quadratic bound} we find that for every $p \in \N$ the bound
\bels{ul Delta estimate}{
\norm{\ul{\Deltaf}\bbm{1}(\norm{\ul{\Deltaf}}\le c/\kappa_\delta)}_p \lesssim_{p,\gamma} n^{-\gamma/3}\,,
}
holds on $\A_{\gamma-\delta_\ast}$, where we used \eqref{bound on d}, \eqref{D rough estimates} and $\norm{\Gf}_p \lesssim_{p,\gamma} n^{\delta_\ast}$.  By Markov's inequality and \eqref{eq:norms_comparable}, the inequality \eqref{ul Delta estimate} implies
\[
\P\pb{n^{-\gamma/4}\le \norm{\ul{\Deltaf}} \le c/\kappa_\delta} \lesssim_{p,\gamma} n^{-\gamma\2p /12}.
\]
We take a union bound over points in $ \A_{\gamma-\delta_\ast}\cap(n^{-3}\Z^2)$ and use the Lipschitz continuity $\abs{\partial_\zeta\norm{\ul{\Deltaf}}}+\abs{\partial_\eta\norm{\ul{\Deltaf}}}\lesssim \eta^{-2}\le n^2$ for every realisation of $\norm{\ul{\Deltaf}}$ to conclude
\bels{gap in ul Delta}{
\P\pb{\exists \, (\zeta,\eta) \in \A_{\gamma-\delta_\ast}: n^{-\gamma/5}\le \norm{\ul{\Deltaf}} \le n^{-\gamma/7}} \lesssim_{p,\gamma} n^{-p},
}
for every $p\in \N$. 
The Lipschitz-continuity of $M$ follows from Corollary~\ref{cor:bound_derivatives}. 
By using Markov's inequality again in an analogous argument we infer 
\bels{initial bound ul Delta}{
\P\pb{\exists \, (\zeta,\eta) \in \A_{\gamma}:\norm{\ul{\Deltaf}} \ge n^{-\gamma/7}} \lesssim_{p,\gamma} n^{-p}
}
from the assumption $\norm{\Deltaf}_p \lesssim_{p,\gamma} n^{-\gamma/6}$  for all $p\in \N$ on  $ \A_\gamma$. Since $\norm{\ul{\Deltaf}}$ is continuous in $\eta$, the combination of \eqref{gap in ul Delta} and \eqref{initial bound ul Delta}  implies 
\[
\P\pb{\exists \, (\zeta,\eta) \in \A_{\gamma-\delta_\ast}:\norm{\ul{\Deltaf}} \ge n^{-\gamma/5}} \lesssim_{p,\gamma} n^{-p}.
\]
This, in turn, allows to estimate the $p$-norm of $\ul{\Deltaf}$ via
\[
\norm{\ul{\Deltaf}}_p \le n^{-\gamma/5}+\frac{2}{\eta} \,\P\pb{\norm{\ul{\Deltaf}} \ge n^{-\gamma/5}}^{1/p}\lesssim_{p,\gamma} n^{-\gamma/5}\,,
\]
where we also used $\norm{\ul{\Deltaf}} \leq 2\eta^{-1}$.

We use this bound, as well as \eqref{deltaSGG bounds} and $\norm{\Gf}_p \lesssim_{p,\gamma} n^{\delta_\ast}$ in \eqref{Delta matrix bound} to see the first inequality in 
\bels{Delta bound p to 2p}{
\norm{\Deltaf}_p \lesssim_{p,\gamma} n^{-\gamma/5} +n^{-\gamma/5}\norm{\Deltaf}_{2p} + \norm{\Df}_p+\frac{n^{3\delta_\ast/2}}{\sqrt{n \eta}}&\lesssim_{p,\gamma}n^{-\gamma/5}+n^{-\gamma/5}\norm{\Deltaf}_{2p} + n^{2c_\ast\1\gamma-\gamma/2}
\\
& \lesssim n^{-\gamma/5}(1+\norm{\Deltaf}_{2p} )
}
for all $p \in \N$ with $c_\ast\le 1/10$. 
For the second inequality we used \eqref{D rough estimates} and $n\eta \geq n^{\gamma-\delta_\ast}=n^{(1-c_\ast)\gamma}$. In \eqref{Delta bound p to 2p} the $p$-norm
of $\Deltaf$ is bounded in terms of the $2p$-norm. The $\star$-norm from \eqref{def of star norm} is designed to handle this problem. 
Summing up \eqref{Delta bound p to 2p} over $p=2^k$  implies
\[
\norm{\Deltaf}_{\star,K} \lesssim_{K,\gamma}n^{-\gamma/5}+n^{-\gamma/5+1/K}\norm{\Deltaf}_{\star,K}+ n^{-2} \norm{\Deltaf}_\infty\,.
\]
For  $K \ge 100/\gamma$ and with $ \norm{\Deltaf}_\infty \le \frac{1}{\eta} \le n$ we infer $\norm{\Deltaf}_{\star,K} \lesssim_{K,\gamma}n^{-\gamma/5}$. 
 This finishes the proof since $\norm{\Deltaf}_{2^k}\le n^{k/K}\norm{\Deltaf}_{\star,K}\lesssim_{K,\gamma} n^{k/K-\gamma/5}\lesssim_{k,\gamma}n^{-\gamma/6}$ holds by choosing $K$ sufficiently large, depending on $k$ and $\gamma$.
\end{proof}
By the isotropic global law from Proposition~\ref{prp:Isotropic Global law} we see that the bound $\norm{\Deltaf}_p \lesssim_{p,\delta, \gamma} n^{-\gamma/6}$ is satisfied on $\A_{\delta,\gamma}$ for some $\gamma$ close to $1$. Thus, repeated application of Lemma~\ref{lmm:Bootstrapping} implies the following corollary. 
\begin{corollary}[Weak isotropic local law]\label{crl:Weak isotropic local law}
For any $\gamma>0$ the bound $\norm{\Deltaf}_p \lesssim_{p,\gamma} n^{-\gamma/6}$ holds on $\A_{\gamma,\gamma}$. 
\end{corollary}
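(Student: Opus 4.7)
The plan is to chain Proposition~\ref{prp:Isotropic Global law} with a finite number of applications of Lemma~\ref{lmm:Bootstrapping}. Throughout, fix a target $\gamma > 0$ and set $\delta := \gamma$, so that $\A_{\gamma,\gamma} \subset \A_{\delta,\gamma}$.

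For the base case, I would use Proposition~\ref{prp:Isotropic Global law} with a sufficiently small $\eps > 0$: the global law gives $\norm{\Deltaf}_p \lesssim_p n^\eps/\sqrt{n\eta}$ on $\eta \geq n^{-c\eps}$, which is much stronger than $n^{-\gamma_0/6}$ provided $\eps$ is small and $\gamma_0 \in (0,1)$ is taken with $\gamma_0 \leq 1 - c\eps$ (so that $\A_{\delta,\gamma_0}$ lies in the region of applicability). This yields
\[
\norm{\Deltaf}_p \lesssim_p n^{-\gamma_0/6} \quad \text{on } \A_{\delta,\gamma_0},
\]
for some fixed $\gamma_0 \in (0,1)$ depending only on universal constants (in particular independent of the target $\gamma$).

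Next I would iterate Lemma~\ref{lmm:Bootstrapping} along the geometric sequence $\gamma_j := (1-c_\ast)^j \gamma_0$, $j \geq 0$, establishing by induction on $j \geq 1$ that
\[
\norm{\Deltaf}_p \lesssim_{p,\gamma_{j-1}} n^{-\gamma_{j-1}/6} \quad \text{on } \A_{\delta,\gamma_j}.
\]
The case $j = 1$ is a single application of Lemma~\ref{lmm:Bootstrapping} with parameter $\gamma_0$ to the base-case bound. For the induction step, since $\gamma_{j-1} > \gamma_j$ the inductive rate $n^{-\gamma_{j-1}/6}$ is stronger than $n^{-\gamma_j/6}$, so the hypothesis of Lemma~\ref{lmm:Bootstrapping} with parameter $\gamma_j$ is satisfied on $\A_{\delta,\gamma_j}$; its conclusion delivers the bound $n^{-\gamma_j/6}$ on $\A_{\delta,(1-c_\ast)\gamma_j} = \A_{\delta,\gamma_{j+1}}$, closing the induction.

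To conclude, since the sequence $(\gamma_j)$ decreases geometrically to $0$, there exists $k \geq 1$ with $\gamma_k \leq \gamma \leq \gamma_{k-1}$ (the edge case $\gamma \geq \gamma_0$ is handled by a direct application of the global law with a suitably adjusted $\eps$). The inductive bound then reads $\norm{\Deltaf}_p \lesssim_{p,\gamma} n^{-\gamma_{k-1}/6} \leq n^{-\gamma/6}$ on $\A_{\delta,\gamma_k} \supset \A_{\gamma,\gamma}$, which is the claim. There is no substantive obstacle: the argument is a finite, essentially mechanical iteration, and the only point that requires care is the book-keeping ensuring that at each step $j$ the current decay rate $n^{-\gamma_{j-1}/6}$ is fed into Lemma~\ref{lmm:Bootstrapping} with its parameter set to $\gamma_j$ (not $\gamma_{j-1}$), which is what enlarges the region from $\A_{\delta,\gamma_j}$ to $\A_{\delta,\gamma_{j+1}}$ while keeping the rate sufficiently strong.
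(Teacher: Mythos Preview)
Your proposal is correct and follows exactly the approach the paper intends: start from the global law (Proposition~\ref{prp:Isotropic Global law}) at some $\gamma_0$ close to~$1$ and iterate Lemma~\ref{lmm:Bootstrapping} finitely many times along the geometric sequence $\gamma_j=(1-c_\ast)^j\gamma_0$ until $\gamma_j\le\gamma$. One small slip: the condition ensuring $\A_{\delta,\gamma_0}\subset E_{\varrho,\delta}\times[n^{-c\eps},1]$ is $\gamma_0\ge 1-c\eps$, not $\gamma_0\le 1-c\eps$ (since $\A_{\delta,\gamma_0}$ has $\eta\ge n^{-1+\gamma_0}$); with that corrected your base case and the subsequent book-keeping are fine.
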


Armed with this rough bound that holds down to all mesoscopic scales, i.e.\ on  $\A_{\gamma,\gamma}$ for all $\gamma>0$ we proof the local law for $\Hf_\zeta$. 

\begin{proof}[Proof of Theorem~\ref{thm:local_law_H}]
By Corollary~\ref{crl:Weak isotropic local law} and $\norm{\Mf} \lesssim 1$, we have a uniform bound on the resolvent on all of $\A_{\gamma,\gamma}$, i.e.\ $\norm{\Gf}_p \lesssim_{p,\gamma} 1$. 
Proposition~\ref{prp:Bound on error matrix} then implies
\begin{equation} \label{eq:Df_bounds_final} 
\norm{\Df}_{p}\lesssim_{p,\eps,\gamma}n^\eps\sqrt{\frac{\norm{\im \Gf}_{q}}{n \eta}}\,, \qquad \norm{\Df}^{\rm{av}}_{p} \lesssim_{p,\eps,\gamma}n^\eps  \frac{\norm{\im \Gf}_q}{n \eta}\,.
\end{equation}
Together with \eqref{bound on d} and \eqref{eq:Df_bounds_final},  we use  \eqref{ul Delta equation} and \eqref{kappa upper bound} to see 
\bels{optimal D bound}{
\norm{\ul{\Deltaf}}_p \lesssim_{p,\eps,\gamma} {\norm{\ul{\Deltaf}}_{2p}^2 +n^\eps  {\frac{\norm{\im \Gf}_q}{n\eta}}}\lesssim_{p,\gamma}n^{-\gamma/6} {\norm{\ul{\Deltaf}}_{2p} +n^\eps  {\frac{\norm{\im \Gf}_q}{n\eta}}}\,,
}
where we used the rough bound from Corollary~\ref{crl:Weak isotropic local law}. 

We sum up $p=2^k$ and use $\norm{\Deltaf}_\infty\le 2/\eta$ to get a quadratic inequality for the $\star$-norm  defined in  \eqref{def of star norm}, namely
\[
\norm{\ul{\Deltaf}}_{\star,K} \lesssim_{K,\eps,\gamma} {n^{-\gamma/6+1/K}\norm{\ul{\Deltaf}}_{\star,K}+\frac{n^\eps}{n \eta}} \,,
\]
By  choosing $\eps>0$ small and $K \in \N$ large enough, we conclude 
\bels{ul Delta final bound}{
\norm{\ul{\Deltaf}}_{\star,K} \lesssim_{K,\gamma}\frac{n^{\gamma/2}}{n \eta}\,.}
Now we use \eqref{optimal D bound},  \eqref{eq:Df_bounds_final},  \eqref{deltaSGG bounds} and \eqref{deltaSGG bounds av} in \eqref{Delta matrix bound} to find
\[
\norm{\Deltaf}_p\lesssim_{p,\eps,\gamma}
\norm{\ul{\Deltaf}}_p + \norm{\ul{\Deltaf}}_{2p} \norm{{\Deltaf}}_{2p}  + n^\eps\sqrt{\frac{\norm{\im \Gf}_{q}}{n \eta}}\,, \quad 
\norm{\Deltaf}^{\rm{av}}_p\lesssim_{p,\eps,\gamma} \norm{\ul{\Deltaf}}_p + \norm{\ul{\Deltaf}}_{2p} \norm{{\Deltaf}}^{\rm{av}}_{2p} + n^\eps  \frac{\norm{\im \Gf}_q}{n \eta}\,.
\]
We use \eqref{ul Delta final bound}, \eqref{compare star and p norm}, sum up $p=2^k$ and employ $\norm{\ul{\Deltaf}}_\infty \leq 2/\eta$
to translate these bounds to the $\star$-norm and conclude \eqref{local law} with \eqref{compare star and p norm}. 
\end{proof}

{\small 
\bibliography{literature_new} 

\providecommand{\bysame}{\leavevmode\hbox to3em{\hrulefill}\thinspace}
\providecommand{\MR}{\relax\ifhmode\unskip\space\fi MR }
% \MRhref is called by the amsart/book/proc definition of \MR.
\providecommand{\MRhref}[2]{%
  \href{http://www.ams.org/mathscinet-getitem?mr=#1}{#2}
}
\providecommand{\href}[2]{#2}
\begin{thebibliography}{10}

\bibitem{aggarwal2019goe}
A.~Aggarwal, P.~Lopatto, and H.-T. Yau, \emph{{GOE} statistics for {L}evy
  matrices}, preprint (2019), arXiv:1806.07363.

\bibitem{AjankiCPAM}
O.~H. Ajanki, L.~Erd\H{o}s, and T.~Kr\"{u}ger, \emph{Singularities of solutions
  to quadratic vector equations on the complex upper half-plane}, Comm. Pure
  Appl. Math. \textbf{70} (2017), no.~9, 1672--1705. \MR{3684307}

\bibitem{AEK2}
\bysame, \emph{Universality for general {W}igner-type matrices}, Probab. Theory
  Related Fields \textbf{169} (2017), no.~3-4, 667--727. \MR{3719056}

\bibitem{ShortRangeCorrelations}
\bysame, \emph{Stability of the matrix {D}yson equation and random matrices
  with correlations}, Probab. Theory Related Fields \textbf{173} (2019),
  no.~1-2, 293--373. \MR{3916109}

\bibitem{Altcirc}
J.~Alt, L.~Erd\H{o}s, and T.~Kr\"{u}ger, \emph{Local inhomogeneous circular
  law}, Ann. Appl. Probab. \textbf{28} (2018), no.~1, 148--203. \MR{3770875}

\bibitem{AltSpecRad}
\bysame, \emph{Spectral radius of random matrices with independent entries}, to
  appear in Probab. Math. Phys. (2021), arXiv:1907.13631.

\bibitem{Kronalt2019}
J.~Alt, L.~Erdős, T.~Krüger, and Yu. Nemish, \emph{Location of the spectrum
  of kronecker random matrices}, Ann. Inst. H. Poincaré Probab. Statist.
  \textbf{55} (2019), no.~2, 661--696.

\bibitem{Altcorrcirc}
J.~Alt and T.~Kr\"{u}ger, \emph{Inhomogeneous circular law for correlated
  matrices}, preprint (2020), arXiv:2005.13533.

\bibitem{Ande15}
G.~W. Anderson, \emph{A local limit law for the empirical spectral distribution
  of the anticommutator of independent {W}igner matrices}, Ann. Inst. Henri
  Poincar\'e Probab. Stat. \textbf{51} (2015), no.~3, 809--841. \MR{3365962}

\bibitem{bai1997}
Z.~D. Bai, \emph{Circular law}, Ann. Probab. \textbf{25} (1997), no.~1,
  494--529. \MR{1428519}

\bibitem{bao2019}
Z.~Bao, L.~Erd\H{o}s, and K.~Schnelli, \emph{Local single ring theorem on
  optimal scale}, Ann. Probab. \textbf{47} (2019), no.~3, 1270--1334.
  \MR{3945747}

\bibitem{BHY19}
R.~Bauerschmidt, J.~Huang, and H.-T. Yau, \emph{Local {K}esten-{M}c{K}ay law
  for random regular graphs}, Comm. Math. Phys. \textbf{369} (2019), no.~2,
  523--636. \MR{3962004}

\bibitem{RegRKY15}
R.~Bauerschmidt, A.~Knowles, and H.-T. Yau, \emph{Local semicircle law for
  random regular graphs}, Comm. Pure Appl. Math. \textbf{70} (2017), no.~10,
  1898--1960. \MR{3688032}

\bibitem{Ben08}
M.~Bender, \emph{Edge scaling limits for a family of non-{H}ermitian random
  matrix ensembles}, Probab. Theory Related Fields \textbf{147} (2010),
  no.~1-2, 241--271. \MR{2594353}

\bibitem{EKYYiso}
A.~Bloemendal, L.~Erd{\H o}s, A.~Knowles, H.-T. Yau, and J.~Yin,
  \emph{Isotropic local laws for sample covariance and generalized {W}igner
  matrices}, Electron. J. Probab. \textbf{19} (2014), no. 33, 53. \MR{3183577}

\bibitem{HeavyBG12}
C.~Bordenave and A.~Guionnet, \emph{Localization and delocalization of
  eigenvectors for heavy-tailed random matrices}, Probab. Theory Related Fields
  \textbf{157} (2013), no.~3-4, 885--953. \MR{3129806}

\bibitem{bourgade2014}
P.~Bourgade, L.~Erd\H{o}s, and H.-T. Yau, \emph{Universality of general
  {$\beta$}-ensembles}, Duke Math. J. \textbf{163} (2014), no.~6, 1127--1190.
  \MR{3192527}

\bibitem{Bourgade2014NonH}
P.~Bourgade, H.-T. Yau, and J.~Yin, \emph{Local circular law for random
  matrices}, Probab. Theory Related Fields \textbf{159} (2014), no.~3-4,
  545--595. \MR{3230002}

\bibitem{BYYBand20}
\bysame, \emph{Random band matrices in the delocalized phase {I}: {Q}uantum
  unique ergodicity and universality}, Comm. Pure Appl. Math. \textbf{73}
  (2020), no.~7, 1526--1596. \MR{4156609}

\bibitem{CipolloniEdge}
G.~Cipolloni, L.~Erd{\H o}s, and D.~Schr\"oder, \emph{{Edge Universality for
  non-{H}ermitian Random Matrices}}, Probab.\ Theory Related Fields (2020),
  {available at https://doi.org/10.1007/s00440--020--01003--7}.

\bibitem{PhysRevA.36.4922}
A.~Crisanti and H.~Sompolinsky, \emph{Dynamics of spin systems with randomly
  asymmetric bonds: {L}angevin dynamics and a spherical model}, Phys. Rev. A
  (3) \textbf{36} (1987), no.~10, 4922--4939. \MR{918489}

\bibitem{EKYY3}
L.~Erd\H{o}s, A.~Knowles, H.-T. Yau, and J.~Yin, \emph{Delocalization and
  diffusion profile for random band matrices}, Comm. Math. Phys. \textbf{323}
  (2013), no.~1, 367--416. \MR{3085669}

\bibitem{ERGraph}
\bysame, \emph{Spectral statistics of {E}rd{\H{o}}s-{R}\'{e}nyi graphs {I}:
  {L}ocal semicircle law}, Ann. Probab. \textbf{41} (2013), no.~3B, 2279--2375.
  \MR{3098073}

\bibitem{erdos2020scattering}
L.~Erd\H{o}s, T.~Kr\"{u}ger, and Yu. Nemish, \emph{Scattering in quantum dots
  via noncommutative rational functions}, preprint (2019), arXiv:1911.05112.

\bibitem{ERDOS2020108507}
\bysame, \emph{Local laws for polynomials of {W}igner matrices}, J. Funct.
  Anal. \textbf{278} (2020), no.~12, 108507, 59. \MR{4078529}

\bibitem{EKR2019}
L.~Erd\H{o}s, T.~Kr\"{u}ger, and D.~Renfrew, \emph{Randomly coupled
  differential equations with elliptic correlations}, preprint (2019),
  arXiv:1908.05178.

\bibitem{Erdos2017Correlated}
L.~Erd\H{o}s, T.~Kr\"{u}ger, and D.~Schr\"{o}der, \emph{Random matrices with
  slow correlation decay}, Forum Math. Sigma \textbf{7} (2019), Paper No. e8,
  89. \MR{3941370}

\bibitem{ESY2009}
L.~Erd\H{o}s, B.~Schlein, and H.-T. Yau, \emph{Local semicircle law and
  complete delocalization for {W}igner random matrices}, Comm. Math. Phys.
  \textbf{287} (2009), no.~2, 641--655. \MR{2481753}

\bibitem{EYbook}
L.~Erd\H{o}s and H.-T. Yau, \emph{A dynamical approach to random matrix
  theory}, Courant Lecture Notes in Mathematics, vol.~28, Courant Institute of
  Mathematical Sciences, New York; American Mathematical Society, Providence,
  RI, 2017. \MR{3699468}

\bibitem{EYYBern}
L.~Erd{\H o}s, H.-T. Yau, and J.~Yin, \emph{Universality for generalized
  {W}igner matrices with {B}ernoulli distribution}, J. Comb. \textbf{2} (2011),
  no.~1, 15--81. \MR{2847916}

\bibitem{Fyodorov6827}
Y.~V. Fyodorov and B.~A. Khoruzhenko, \emph{Nonlinear analogue of the
  {M}ay-{W}igner instability transition}, Proc. Natl. Acad. Sci. USA
  \textbf{113} (2016), no.~25, 6827--6832. \MR{3521630}

\bibitem{Girko1985}
V.~L. Girko, \emph{Circular law}, Theory Probab. Appl. \textbf{29} (1985),
  no.~4, 694--706.

\bibitem{GirkoElliptic1}
\bysame, \emph{An elliptic law}, Dokl. Akad. Nauk Ukrain. SSR Ser. A (1985),
  no.~1, 56--59. \MR{781018}

\bibitem{GirkoStrongElliptic}
\bysame, \emph{Strong elliptic law}, Random Oper. Stochastic Equations
  \textbf{5} (1997), no.~3, 269--306. \MR{1483014}

\bibitem{GoetzeNaumovTikhomirov2015}
F.~G\"{o}tze, A.~Naumov, and A.~Tikhomirov, \emph{On minimal singular values of
  random matrices with correlated entries}, Random Matrices Theory Appl.
  \textbf{4} (2015), no.~2, 1550006, 30. \MR{3356884}

\bibitem{Gotze2017}
\bysame, \emph{Local laws for non-{H}ermitian random matrices and their
  products}, Random Matrices Theory Appl. \textbf{9} (2020), no.~4, 2150004,
  53. \MR{4133077}

\bibitem{YAR2016}
Y.~He, A.~Knowles, and R.~Rosenthal, \emph{Isotropic self-consistent equations
  for mean-field random matrices}, Probab. Theory Related Fields \textbf{171}
  (2018), no.~1-2, 203--249. \MR{3800833}

\bibitem{Helton01012007}
J.~W. Helton, R.~Rashidi~Far, and R.~Speicher, \emph{Operator-valued
  semicircular elements: solving a quadratic matrix equation with positivity
  constraints}, Int. Math. Res. Not. IMRN (2007), no.~22, Art. ID rnm086, 15.
  \MR{2376207}

\bibitem{Johansson2005FromGT}
K.~Johansson, \emph{From {G}umbel to {T}racy-{W}idom}, Probab. Theory Related
  Fields \textbf{138} (2007), no.~1-2, 75--112. \MR{2288065}

\bibitem{KYIsotropic}
A.~Knowles and J.~Yin, \emph{The isotropic semicircle law and deformation of
  {W}igner matrices}, Comm. Pure Appl. Math. \textbf{66} (2013), no.~11,
  1663--1750. \MR{3103909}

\bibitem{ledoux00635410}
M.~Ledoux, \emph{Complex {H}ermite polynomials: from the semi-circular law to
  the circular law}, Commun. Stoch. Anal. \textbf{2} (2008), no.~1, 27--32.
  \MR{2446909}

\bibitem{LSSY2016}
J.~O. Lee, K.~Schnelli, B.~Stetler, and H.-T. Yau, \emph{Bulk universality for
  deformed {W}igner matrices}, Ann. Probab. \textbf{44} (2016), no.~3,
  2349--2425. \MR{3502606}

\bibitem{EigenvectorsLuh}
K.~Luh and S.~O'Rourke, \emph{Eigenvector delocalization for non-{H}ermitian
  random matrices and applications}, Random Structures Algorithms \textbf{57}
  (2020), no.~1, 169--210. \MR{4120597}

\bibitem{EigenvectorsTikhomirov}
A.~Lytova and K.~Tikhomirov, \emph{On delocalization of eigenvectors of random
  non-{H}ermitian matrices}, Probab. Theory Related Fields \textbf{177} (2020),
  no.~1-2, 465--524. \MR{4095020}

\bibitem{correlationsMarti}
D.~Mart\'{\i}, N.~Brunel, and S.~Ostojic, \emph{Correlations between synapses
  in pairs of neurons slow down dynamics in randomly connected neural
  networks}, Phys. Rev. E \textbf{97} (2018), 062314.

\bibitem{MC}
B.~Mehlig and J.~T. Chalker, \emph{Statistical properties of eigenvectors in
  non-{H}ermitian {G}aussian random matrix ensembles}, J. Math. Phys.
  \textbf{41} (2000), no.~5, 3233--3256. \MR{1755501}

\bibitem{NaumovElliptic}
A.~Naumov, \emph{The elliptic law for random matrices}, Vestnik Moskov. Univ.
  Ser. XV Vychisl. Mat. Kibernet. (2013), no.~1, 31--38, 48. \MR{3114382}

\bibitem{ProductNemish2017}
Yu. Nemish, \emph{Local law for the product of independent non-{H}ermitian
  random matrices with independent entries}, Electron. J. Probab. \textbf{22}
  (2017), Paper No. 22, 35. \MR{3622892}

\bibitem{NguyenORourke2015}
H.~H. Nguyen and S.~O'Rourke, \emph{The elliptic law}, Int. Math. Res. Not.
  IMRN (2015), no.~17, 7620--7689. \MR{3403996}

\bibitem{RourkeRenfrew2014}
S.~O'Rourke and D.~Renfrew, \emph{Low rank perturbations of large elliptic
  random matrices}, Electron. J. Probab. \textbf{19} (2014), no. 43, 65.
  \MR{3210544}

\bibitem{rudelson2015}
M.~Rudelson and R.~Vershynin, \emph{Delocalization of eigenvectors of random
  matrices with independent entries}, Duke Math. J. \textbf{164} (2015),
  no.~13, 2507--2538. \MR{3405592}

\bibitem{RV16}
\bysame, \emph{No-gaps delocalization for general random matrices}, Geom.
  Funct. Anal. \textbf{26} (2016), no.~6, 1716--1776. \MR{3579707}

\bibitem{overrep1}
S.~Song, P.~J. Sj\"{o}str\"{o}m, M.~Reigl, S.~Nelson, and D.~B. Chklovskii,
  \emph{Highly nonrandom features of synaptic connectivity in local cortical
  circuits}, PLOS Biol. \textbf{3} (2005), no.~3.

\bibitem{TaoVuCircular}
T.~Tao and V.~Vu, \emph{Random matrices: universality of {ESD}s and the
  circular law}, Ann. Probab. \textbf{38} (2010), no.~5, 2023--2065, With an
  appendix by Manjunath Krishnapur. \MR{2722794}

\bibitem{tao2015}
\bysame, \emph{Random matrices: universality of local spectral statistics of
  non-{H}ermitian matrices}, Ann. Probab. \textbf{43} (2015), no.~2, 782--874.
  \MR{3306005}

\bibitem{overrep2}
Y.~Wang, H.~Markram, P.~H. Goodman, T.~K. Berger, J.~Ma, and P.~S.
  Goldman-Rakic, \emph{Heterogeneity in the pyramidal network of the medial
  prefrontal cortex}, Nat. Neurosci. \textbf{9} (2006), no.~4, 534.

\bibitem{Wigner1955}
E.~P. Wigner, \emph{Characteristic vectors of bordered matrices with infinite
  dimensions}, Ann. of Math. \textbf{62} (1955), 548--564. \MR{77805}

\bibitem{xi2017}
H.~Xi, F.~Yang, and J.~Yin, \emph{Local circular law for the product of a
  deterministic matrix with a random matrix}, Electron. J. Probab. \textbf{22}
  (2017), Paper No. 60, 77. \MR{3683369}

\bibitem{Y_circularlaw}
J.~Yin, \emph{The local circular law {III}: general case}, Probab. Theory
  Related Fields \textbf{160} (2014), no.~3-4, 679--732. \MR{3278919}

\end{thebibliography}
\bibliographystyle{amsplain} 
} 

\end{document}